\newcommand{\Aff}{\operatorname{Aff}}
\newcommand{\Z}{\mathbb{Z}}
\newcommand{\Aut }{\mathrm{Aut}}
\newcommand{\cB }{\mathcal{B}}
\newcommand{\cF }{\mathcal{F}}
\newcommand{\cX }{\mathcal{X}}
\newcommand{\End }{\mathrm{End}}
\newcommand{\fie }{\Bbbk }
\newcommand{\fm }{\mathfrak{m}}
\newcommand{\gr }{\mathrm{gr}}
\newcommand{\id }{\mathrm{id}}
\newcommand{\NA}{\mathcal{B}}
\newcommand{\cD}{\mathcal{D}}
\newcommand{\cA}{\mathcal{A}}
\newcommand{\ndN}{\mathbb{N}}
\newcommand{\ndZ}{\mathbb{Z}}
\newcommand{\ot }{\otimes }
\newcommand{\yd}[1]{\prescript{#1}{#1}{\mathcal{YD}}}
\newcommand{\ad}{\operatorname{ad}}
\newcommand{\ch}{\mathrm{char}}
\newcommand{\supp}{\operatorname{supp}}
\newcommand{\Dchaintwo}[3]{%
        \setlength{\unitlength}{.08cm}
        \rule[-3\unitlength]{0pt}{8\unitlength}
        \begin{picture}(14,5)(0,3)
                \put(1,2){\circle{2}}
                \put(2,2){\line(1,0){10}}
                \put(13,2){\circle{2}}
                \put(1,5){\makebox[0pt]{\scriptsize #1}}
                \put(7,4){\makebox[0pt]{\scriptsize #2}}
                \put(13,5){\makebox[0pt]{\scriptsize #3}}
        \end{picture}
}
\numberwithin{equation}{section}
\newtheorem{thm}{Theorem}[section]
\newtheorem{lem}[thm]{Lemma}
\newtheorem{pro}[thm]{Proposition}
\newtheorem{cor}[thm]{Corollary}
\newtheorem{rem}[thm]{Remark}
\newtheorem{question}[thm]{Question}
\theoremstyle{definition}
\newtheorem{defi}[thm]{Definition}
\newtheorem{exa}[thm]{Example}
\title[Nichols algebras of simple Yetter--Drinfeld modules]{Finite-dimensional Nichols algebras of simple
Yetter--Drinfeld modules (over groups) of prime dimension}
\author{I. Heckenberger}
\author{E. Meir}
\author{L. Vendramin}
\address[I. Heckenberger]{Philipps-Universit\"at Marburg,
        FB Mathematik und Informatik,
        Hans-Meer\-wein-Stra\ss e,
35032 Marburg, Germany.}
\email{heckenberger@mathematik.uni-marburg.de}
\address[E. Meir]{Institute of Mathematics, University of Aberdeen, Fraser Noble Building, Aberdeen AB24 3UE, UK}
\email{meirehud@gmail.com}
\address[L. Vendramin]{Department of Mathematics and Data
Science, Vrije Universiteit Brussel, Pleinlaan 2, 1050 Brussel}
\email{Leandro.Vendramin@vub.be}
\keywords{Nichols algebra, Affine rack, Alexander rack, braiding}
\subjclass[2020]{16T05, 18M15}
\begin{document}

\begin{abstract}
    Over fields of characteristic zero, we determine all absolutely irreducible 
    Yetter--Drinfeld modules over groups that have prime dimension and yield a finite-dimensional Nichols algebra. To achieve our goal, 
    we introduce orders of braided vector 
    spaces and study their degenerations and specializations. 
\end{abstract}

\maketitle

\section{Introduction}
\label{ss:introduction}

Pointed Hopf algebras form a central class of objects in the 
theory of Hopf algebras since the beginning \cite{MR0252485}. 
Besides the coradical of a pointed Hopf algebra, which is the group ring of a group, the skew-primitive elements and the infinitesimal braiding belong to the most important invariants \cite{MR1659895}. The infinitesimal braiding $V$ is a
braided vector space that yields another invariant: a connected strictly graded braided Hopf algebra known as the 
\emph{Nichols algebra} of~$V$. Very prominent examples are the positive part of quantum groups, but the structure theory is also deeply understood for abelian groups and corresponding braided vector spaces of diagonal type. For a general account, we refer to 
\cite{MR3728608} and \cite{MR4164719}.

Nichols algebras also appear in the highly influential papers of Nichols \cite{MR506406}, Woronowicz \cite{MR901157,MR994499} and  
Majid \cite{MR1969778}, Schaubenburg \cite{MR1396857}, Rosso \cite{MR1632802}, Kharchenko \cite{MR1763385}, 
and Andruskiewitsch and Schneider \cite{MR1913436}. Recent interest and applications of 
Nichols algebras appear in algebraic geometry in the work of Kapranov and Schechtman \cite{MR4176533},
and in quantum field theory 
in the paper of Lentner \cite{MR4184294}.  In number theory,  
Ellenberg, Tran and Westerland
\cite{ETW} used Nichols algebras to prove an upper bound in the weak Malle
conjecture on the distribution of finite extensions of $\mathbb{F}_q(t)$ 
with specified Galois groups. 

The classification of finite-dimensional Nichols algebras
of Yetter--Drinfeld modules
was achieved in \cite{MR2462836} for abelian groups and fields of characteristic zero, and
in \cite{MR3605018,MR3656477} for non-abelian groups and 
semi-simple non-simple Yetter--Drinfeld modules over arbitrary fields. The key structure in these classifications is the Weyl groupoid \cite{MR2766176,MR2207786}. 
For most of the Nichols algebras in the classification, a construction from Nichols algebras of diagonal type by folding was described by Lentner \cite{MR3253277}.
However, the classification problem is wide open for irreducible Yetter--Drinfeld modules, despite of tremendous efforts taken by different authors using different tools \cite{MR3395052,MR3493214,MR3713037,MR4109132,MR4325965,MR2786171,
MR2745542,MR2209265,MR3981991,
MR3552907,MR4151588,MR2891215,MR3356939,MR4034793}. Over non-abelian groups, not even a satisfactory unified explanation of the Hilbert series of the known finite-dimensional examples is available. At the moment, only a few finite-dimensional
Nichols algebras are known. 

For any group $G$, let $\yd{\fie G}$ denote the category of Yetter--Drinfeld modules
over the group ring $\fie G$ of $G$.

\begin{exa}
\label{exa:dim12}
    Let $G$ be a non-abelian epimorphic image of the group
    \[
    \langle x_1,x_2,x_3:x_1x_2=x_3x_1,\,x_1x_3=x_2x_1,\,x_2x_3=x_1x_2\rangle. 
    \]
    For all $i\in\{1,2,3\}$, let $g_i\in G$ be the image of $x_i$. Then $g_i\ne g_j$ for all $i\ne j$. 
    Let $\langle g_1\rangle $ be the subgroup of $G$ generated by $g_1$,
    and let $U$ be a one-dimensional Yetter--Drinfeld module over $\langle g_1\rangle $
    with $\fie \langle g_1\rangle $-coaction $\delta(u)=g_1\ot u$ for all $u\in U$
    and $\fie \langle g_1\rangle $-action $g_1u=-u$ for all $u\in U$.
    Then $V=\fie G\ot_{\fie \langle g_1\rangle}U$ is a Yetter--Drinfeld module over $\fie G$.
    Let $v_1=1\ot u$ for some $u\in U$ with $u\ne 0$. Then
    \[
        v_1,\,v_2=-g_3v_1,\,v_3=-g_2v_1
    \]
    form a basis of $V$. The $G$-degrees of these 
    vectors are $g_1$, $g_2$ and $g_3$, respectively. The action of $G$ on $V$
    is given by 
    \[
    g_i v_j=-v_{2i-j\bmod 3},\qquad i,j\in\{1,2,3\}.
    \]
    The support of $V$ is the affine rack $\Aff(3,2)$, see
    Section~\ref{ss:preliminaries}.
    Then $\dim\NA(V)=12$. This example appeared first
    in~\cite{MR1800714}; see also \cite{MR1667680}.  
\end{exa}

The following two examples appeared  first in~\cite{MR1994219}.

\begin{exa}
\label{exa:dim1280a}
Let $G$ be a non-abelian epimorphic image of the group
\[
\langle x_1,x_2,x_3,x_4,x_5:x_ix_jx_i^{-1}=x_{-i+2j\bmod 5}\rangle.
\]
For all $i\in\{1,2,3,4,5\}$, let 
$g_i\in G$ be the image of $x_i$.
Then $g_i\ne g_j$ for all $i\ne j$.
Let $\langle g_1\rangle $ be the subgroup of $G$ generated by $g_1$,
and let $U$ be a one-dimensional Yetter--Drinfeld module over $\langle g_1\rangle $
with $\fie \langle g_1\rangle $-coaction $\delta(u)=g_1\ot u$ for all $u\in U$
and $\fie \langle g_1\rangle $-action $g_1u=-u$ for all $u\in U$.
Then $V=\fie G\ot_{\fie \langle g_1\rangle}U$ is a Yetter--Drinfeld module over $\fie G$.
Let $v_1=1\ot u$ for some $u\in U$ with $u\ne 0$. 
Then the vectors 
    \[
        v_1,\,v_2=-g_5v_1,\,v_3=-g_4v_1,\,v_4=-g_3v_1,\,v_5=-g_2v_1
    \]
    form a basis of $V$. 
    For each $i\in\{1,2,3,4,5\}$, $\deg v_i=g_i$. The action of $G$ on $V$
    is given by 
    \[
    g_i v_j=-v_{-i+2j\bmod 5},\qquad i,j\in\{1,2,3,4,5\}.
    \]
    The support of $V$ is the affine rack $\Aff(5,2)$. 
    Then $\dim\NA(V)=1280$. 
\end{exa}

\begin{exa}
\label{exa:dim1280b}
Let $G$ be a non-abelian epimorphic image of the group
\[
\langle x_1,x_2,x_3,x_4,x_5:x_ix_jx_i^{-1}=x_{3(i+j)\bmod 5}\rangle.
\]
For all $i\in\{1,2,3,4,5\}$, let 
$g_i\in G$ be the image of $x_i$.
 Then $g_i\ne g_j$ for all $i\ne j$.
Let $\langle g_1\rangle $ be the subgroup of $G$ generated by $g_1$,
and let $U$ be a one-dimensional Yetter--Drinfeld module over $\langle g_1\rangle $
with $\fie \langle g_1\rangle $-coaction $\delta(u)=g_1\ot u$ for all $u\in U$
and $\fie \langle g_1\rangle $-action $g_1u=-u$ for all $u\in U$.
Then $V=\fie G\ot_{\fie \langle g_1\rangle}U$ is a Yetter--Drinfeld module over $\fie G$.
Let $v_1=1\ot u$ for some $u\in U$ with $u\ne 0$. Then the vectors 
    \[
        v_1,\,v_2=-x_3v_1,\,v_3=-x_5v_1,\,v_4=-x_2v_1,\,v_5=-x_4v_1
    \]
    form a basis of $V$. 
    For each $i\in\{1,2,3,4,5\}$, $\deg v_i=g_i$. The action of $G$ on $V$
    is given by 
    \[
    x_i v_j=-v_{3(i+j)\bmod 5},\qquad i,j\in\{1,2,3,4,5\}.
    \]
    The support of $V$ is the affine rack $\Aff(5,3)$. 
    Then $\dim\NA(V)=1280$. 
\end{exa}

Gra\~na found the following two examples. 

\begin{exa}
\label{exa:dim326592a}
Let $G$ be a non-abelian epimorphic image of the group
\[
\langle x_1,x_2,\dots,x_7:x_ix_jx_i^{-1}=x_{5i+3j\bmod 7}\rangle.
\]
For all $i\in\{1,2,\dots,7\}$, let 
$g_i\in G$ be the image of $x_i$.
 Then $g_i\ne g_j$ for all $i\ne j$.
Let $\langle g_1\rangle $ be the subgroup of $G$ generated by $g_1$,
and let $U$ be a one-dimensional Yetter--Drinfeld module over $\langle g_1\rangle $
with $\fie \langle g_1\rangle $-coaction $\delta(u)=g_1\ot u$ for all $u\in U$
and $\fie \langle g_1\rangle $-action $g_1u=-u$ for all $u\in U$.
Then $V=\fie G\ot_{\fie \langle g_1\rangle}U$ is a Yetter--Drinfeld module over $\fie G$.
Let $v_1=1\ot u$ for some $u\in U$ with $u\ne 0$. 
Then the vectors 
    \[
        v_1,\,v_2=-g_4v_1,\,v_3=-g_7v_1,\,
        v_4=-g_3v_1,\,
        v_5=-g_6v_1,\,
        v_6=-g_2v_1,\,
        v_7=-g_5v_1,
    \]
    form a basis of $V$. 
    For each $i\in\{1,2,\dots,7\}$, $\deg v_i=g_i$.
    The action of $G$ on $V$
    is given by 
    \[
    x_i v_j=-v_{5i+3j\bmod 7},\qquad i,j\in\{1,2,\dots,7\}.
    \]
    The support of $V$ is the affine rack $\Aff(7,3)$. 
    Then $\dim\NA(V)=326592$. 
\end{exa}

\begin{exa}
\label{exa:dim326592b}
Let $G$ be a non-abelian epimorphic image of the group
\[
\langle x_1,x_2,\dots,x_7:x_ix_jx_i^{-1}=x_{3i+5j\bmod 7}\rangle.
\]
For all $i\in\{1,2,\dots,7\}$, let 
$g_i\in G$ be the image of $x_i$.
 Then $g_i\ne g_j$ for all $i\ne j$.
Let $\langle g_1\rangle $ be the subgroup of $G$ generated by $g_1$,
and let $U$ be a one-dimensional Yetter--Drinfeld module over $\langle g_1\rangle $
with $\fie \langle g_1\rangle $-coaction $\delta(u)=g_1\ot u$ for all $u\in U$
and $\fie \langle g_1\rangle $-action $g_1u=-u$ for all $u\in U$.
Then $V=\fie G\ot_{\fie \langle g_1\rangle}U$ is a Yetter--Drinfeld module over $\fie G$.
Let $v_1=1\ot u$ for some $u\in U$ with $u\ne 0$. 
Then the vectors 
    \[
        v_1,\,v_2=-g_6v_1,\,v_3=-g_4v_1,\,
        v_4=-g_2v_1,\,
        v_5=-g_7v_1,\,
        v_6=-g_5v_1,\,
        v_7=-g_3v_1,
    \]
    form a basis of $V$. The degrees of these 
    vectors are $x_1,x_2,\dots,x_7$, respectively. The action of $G$ on $V$
    is given by 
    \[
    x_i v_j=-v_{3i+5j\bmod 7},\qquad i,j\in\{1,2,\dots,7\}.
    \]
    The support of $V$ is the affine rack $\Aff(7,5)$. 
    Then $\dim\NA(V)=326592$. 
\end{exa}

Examples~\ref{exa:dim1280a}--\ref{exa:dim326592b} 
in arbitrary characteristic were discussed in~\cite{MR2803792}.  
We collected some information on these Nichols algebras in~Table \ref{tab:nichols}, where we write  
\[
(n)_t=1+t+\cdots+t^{n-1}\in \ndZ [t] 
\]
for all integers $n\ge 0$.

Be warned, that there exists at least one additional finite-dimensional Nichols algebra
over $\Aff(3,2)$ in characteristic two; see \cite[Appendix A]{MR2891215}. Hence Table~\ref{tab:nichols} should not be regarded as a complete list
of finite-dimensional Nichols algebras over simple racks with prime cardinality. 

\begin{table}[ht]
\caption{Finite-dimensional Nichols algebras over affine racks (in arbitrary characteristic).}
\begin{center}
\label{tab:nichols}
\begin{tabular}{|c|c|c|c|c|c}
\hline
$\dim V$ & $\supp V$ & $\dim\NA(V)$ & Hilbert series & Comments\tabularnewline
\hline
3 & $\Aff(3,2)$ & $12$ & $(2)^2_t (3)_t$ & Example~\ref{exa:dim12}\\
\hline
5 & $\Aff(5,2)$ & $1280$ & $(4)^4_t (5)_t$ & Example~\ref{exa:dim1280a}\\
\hline
5 & $\Aff(5,3)$ & $1280$ & $(4)^4_t (5)_t$ & Example~\ref{exa:dim1280b}\\
\hline
7 & $\Aff(7,3)$ & $326592$ & $(6)^6_{t} (7)_t$ & Example~\ref{exa:dim326592a}\\
\hline
7 & $\Aff(7,5)$ & $326592$ & $(6)^6_{t} (7)_t$ & Example~\ref{exa:dim326592b} \\
\hline
\end{tabular}
\end{center}
\end{table}

With the present paper, we initiate the study of deformations of braided vector spaces to retrieve additional structural information about Nichols algebras of simple Yetter--Drinfeld modules. We introduce and study braided vector space filtrations and orders of braided vector spaces. Note that
filtrations of Yetter--Drinfeld modules are already commonly used, see e.g. \cite{MR4298502}. For simple Yetter--Drinfeld modules however those filtrations are trivial, whereas our definition is much less restrictive. This fact is one of the main reasons that our approach leads to new results.

Deformation techniques are common in several parts of mathematics. Even in the representation theory of generalized quantum groups, (Poisson) orders have been used successfully by Angiono, Andruskiewitsch and Yakimov in \cite{poisson}. In our context, orders of braided vector spaces and special properties in suitable singular points are studied to obtain information on the size of the Nichols algebra in characteristic $0$. This is a new approach, as orders have not yet been applied systematically in the structure theory of Nichols algebras of braided vector spaces.
We demonstrate the method's power by solving the long-standing problem of classifying finite-dimensional Nichols algebras of irreducible Yetter--Drinfeld modules of prime dimension over non-abelian groups. We confirm that there exist no finite-dimensional Nichols algebras in this class except the known examples:
Over fields of characteristic $0$, 
the Nichols algebras of Examples \ref{exa:dim12}--\ref{exa:dim326592b}
are the only examples appearing when the (absolutely irreducible) 
braided vector space has prime dimension. 

\begin{thm}
\label{thm:main}
Assume that $\ch(\fie)=0$. 
Let $V$ be an absolutely irreducible Yetter--Drinfeld module
over a group $G$ such that the support 
$\{x\in G:V_x\ne 0\}$ of $V$ 
generates
$G$. Assume that $\dim V$ is a prime number. Then
$\NA(V)$ is finite-dimensional if and only if
$\dim V\in\{3,5,7\}$ and $V$ is isomorphic to one of the Yetter--Drinfeld modules
of Examples~\ref{exa:dim12}, \ref{exa:dim1280a}, \ref{exa:dim1280b},
~\ref{exa:dim326592a},~\ref{exa:dim326592b}. 
\end{thm}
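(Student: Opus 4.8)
The plan is to reduce the classification to a finite problem about absolutely irreducible braided vector spaces of prime dimension $p$, and then to use orders and their specializations to rule out all but finitely many combinatorial types. First I would recall that, since $V$ is an absolutely irreducible Yetter--Drinfeld module over $G$ whose support generates $G$, the support is a single conjugacy class, hence an indecomposable rack $X$ with $|X|=p$; by the standard structure theory such a rack, if it is to have any chance of giving a finite-dimensional Nichols algebra, must be simple, and simple racks of prime cardinality are exactly the affine racks $\Aff(p,\ell)$ with $\ell$ a primitive root modulo $p$ (together with the cyclic rack $\Z/p$, which one discards because its associated Nichols algebras of this form are infinite-dimensional unless $p$ is tiny and can be handled directly). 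So the module $V$ is determined by $p$, by the multiplier $\ell$, and by a one-dimensional Yetter--Drinfeld module over the stabilizer (a cyclic group), i.e. by a root of unity $q$ governing the ``diagonal'' self-braiding of the support element. This gives a one-parameter family $V_q$ of braided vector spaces for each $(p,\ell)$.

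The heart of the argument is then to analyze, for each such family, the function $q \mapsto \dim \NA(V_q)$ using the theory of orders of braided vector spaces developed in the paper. The idea is to build an order $\mathcal{O}$ of braided vector spaces over a suitable one-dimensional base (a discrete valuation ring, or the local ring at a point of a curve) whose generic fibre is $V_q$ for generic $q$ and whose special fibre at a cleverly chosen singular value of $q$ is a braided vector space $W$ whose Nichols algebra is either known to be infinite-dimensional or is of diagonal type and thus controlled by the Heckenberger classification. By semicontinuity of the Nichols algebra in families --- i.e.\ the specialization/degeneration results proved earlier in the excerpt, which give $\dim \NA(V_q) \ge \dim \NA(W)$ for the special fibre, or more precisely relate the Hilbert series --- one deduces that $\dim\NA(V_q)=\infty$ unless $q$ lies in a very restricted finite set. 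For the finitely many surviving pairs $(p,\ell,q)$ one then either recognizes the module as one of Examples~\ref{exa:dim12}--\ref{exa:dim326592b}, or computes directly (the relevant braided vector spaces are small enough, the racks having $\le 7$ elements and the relations being explicit) that the Nichols algebra is infinite-dimensional, e.g.\ by exhibiting an infinite linearly independent family or by finding a sub-braided-vector-space of Cartan type $A$ with non-integral Cartan entries.

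In more detail, the key steps in order are: (i) translate ``absolutely irreducible YD module of prime dimension with generating support'' into the data $(X,\chi)$ with $X=\Aff(p,\ell)$ and $\chi$ a character, using the Clifford-theoretic description of simple YD modules over groups; (ii) for each $(p,\ell)$, set up the one-parameter order $\mathcal{O}_{p,\ell}$ over a DVR realizing the family $V_q$, checking that it is indeed an order in the sense of the paper (flatness, compatibility of the braiding with the base change); (iii) identify one or more special points $q_0$ where the special fibre degenerates to a braided vector space of diagonal type or to one containing an obviously infinite-dimensional sub-object, and apply the degeneration inequality for Nichols algebras to conclude $\dim\NA(V_q)=\infty$ for $q$ outside a finite exceptional set; (iv) handle the finitely many exceptional $(p,\ell,q)$ by hand, matching the finite-dimensional ones to the tabulated examples and killing the rest. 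I expect step (iii) --- choosing the right singular specialization and proving the degeneration bound is sharp enough to force infinite dimension --- to be the main obstacle: the naive specializations tend to land on diagonal braidings that are themselves of ``unknown'' or borderline type, so one likely needs to iterate (specialize, then specialize again inside the special fibre) or to combine the order-theoretic bound with an independent lower bound coming from a distinguished pre-Nichols algebra or from the rack cohomology of $\Aff(p,\ell)$. The bookkeeping over the (finitely many, but not few) primitive roots $\ell$ modulo each $p$, and the fact that $p$ a priori ranges over all primes so that one needs a uniform argument for $p$ large, is the second substantial difficulty; a uniform bound presumably comes from the growth of $\dim\NA$ along the family forcing $p\le 7$.
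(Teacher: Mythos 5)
Your high-level plan (reduce to $\Aff(p,\alpha)$ plus a single root of unity $\lambda$, then use orders and degenerations) matches the paper's strategy in outline, but two of your steps conceal genuine gaps. First, the specialization that actually works is \emph{arithmetic}, not geometric: one takes the order $V_R$ over $R=\ndZ[\lambda]$ and reduces modulo a maximal ideal $\fm$ lying over the prime $p=\dim V$ itself, so that the residue field has characteristic $p$. Only then does the derived subgroup of $G$ --- cyclic of order $p$ by Lemma~\ref{lem:GXformulas} --- act unipotently, and the $J$-adic filtration by powers of the ideal $J=(\gamma-1)$ degenerates $V_{R,\fm}$ to a braided vector space isomorphic to the span of the vectors $(\ad x_1)^m(x_2)$ inside a rank-two Nichols algebra of diagonal type with $q_{11}=q_{12}=1$. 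This is what makes the positive-characteristic rank-two classification (Proposition~\ref{pro:rank2}) applicable \emph{uniformly in} $p$, and it is the answer to your worry about large primes. Your ``one-parameter family in $q$ over a DVR with a singular value of $q$'' stays in characteristic zero and does not produce any such degeneration: for an irreducible Yetter--Drinfeld module the filtrations available before changing the characteristic are trivial, which is exactly the obstruction the paper's arithmetic specialization is designed to circumvent.

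Second, your step (iv) assumes the exceptional set is finite and can be killed by hand. It is not: after the characteristic-$p$ argument one is left with every $\lambda$ whose order is $2p^k$ for some $k\ge 1$ (these reduce to $-1$ modulo $\fm$ while $(p,\alpha)$ lies on the good list), an infinite family for each of the five surviving pairs $(p,\alpha)$. The paper needs a second, independent technique here (Theorem~\ref{thm:2primes}): writing the order $N$ of $\lambda$ as $p^l r$ with $\gcd(p,r)=1$, the $r$-th powers $(h_yv_z)^r$ of the generators become primitive in the specialization at $\fm$ and span a second absolutely irreducible Yetter--Drinfeld module of dimension $p\ge 3$, so the classification of finite-dimensional Nichols algebras of semisimple Yetter--Drinfeld modules (Proposition~\ref{pro:3+3}) forces $\dim\NA(V)=\infty$ via Lemma~\ref{lem:specialprimitives}. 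Without this (or an equivalent) argument your proof does not close.
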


%
We remark that in Theorem~\ref{thm:main} the assumption on the group being generated by the support of $V$ 
is not too restrictive. Indeed, for the study of $\NA(V)$ one can always  
replace $G$ by its subgroup generated by the support of $V$. 

In \cite{MR1800714}, Milinski and Schneider studied
Nichols algebras of Yetter--Drinfeld modules over 
Coxeter groups. Later, in \cite{MR1714540}, 
Andruskiewitsch and Gra\~na explicitly considered the case of  
Nichols algebras over dihedral groups
of size $2p$, where $p$ is a prime number. 
In the non-abelian case, 
this boils down
to study Nichols algebras
of braided vector spaces 
of dihedral type, that is, finite-dimensional  
vector spaces $V_p$, where $p\geq3$ is a prime number and  
the braiding of $V_p$ is of the form 
\[
c\colon V_p\otimes V_p\to V_p\otimes V_p,\quad 
c(v_i\otimes v_j)=\lambda v_{2i-j\bmod p}\otimes v_i,
\]
for some basis $v_0,v_1,\dots,v_{p-1}$ and 
some non-zero scalar $\lambda$. 

The case where $(p,\lambda)=(3,-1)$ is that of Example~\ref{exa:dim12}. 

The question of determining the finite-dimensional Nichols algebras 
whose support is a simple rack
was raised in \cite[page 228]{MR2786171} and discussed in several papers, including 
\cite{MR2799090,MR2745542,MR3077241} 
and \cite[\S4.4]{MR3751453}.
Theorem~\ref{thm:main} answers this question for simple racks with a prime cardinal. 


The paper is organized as follows. In Section \ref{ss:preliminaries} we discuss
affine racks and absolutely irreducible Yetter--Drinfeld modules of prime dimension. 
Section \ref{ss:orders} concerns orders of braided vector spaces. In Section \ref{ss:positive}
we discuss the passage from characteristic zero to positive characteristic. The proof
of Theorem \ref{thm:main} appears in Section \ref{ss:proof}. 
In an appendix, we discuss the applicability of our methods to similar problems 
like that of Nichols algebras over symmetric groups or classes of non-abelian finite 
groups (see Corollary \ref{cor:simples}).


%

\subsection*{The strategy of the proof}

The proof of Theorem \ref{thm:main} consists of three major steps.
First, we study Nichols algebras of Yetter--Drinfeld orders and, in particular, their
specializations at well-chosen primes.
Second, we relate these specializations to Nichols algebras of diagonal type using filtrations of braided vector spaces. Finally, we use the classification of rank two Nichols algebras of diagonal type in positive characteristic obtained in \cite{MR3313687}. To deal with the cases not treated by this method, we develop a second technique based on Yetter--Drinfeld orders which relies on the classification of finite-dimensional Nichols algebras of semisimple Yetter--Drinfeld modules in positive characteristic in \cite{MR3656477}.


\section{Preliminaries}
\label{ss:preliminaries}

In this section, we review some basic notions about racks and quandles. 
We refer to \cite{MR1994219} for more details. 
A \emph{rack} is a set $X$ together with a binary operation $(x,y)\mapsto x\triangleright y$ on 
$X$ such that every map $\varphi_x\colon X\to X$, $y\mapsto x\triangleright y$, is bijective, and
$x\triangleright(y\triangleright z)=(x\triangleright y)\triangleright(x\triangleright z)$ holds
for all $x,y,z\in X$. A rack is said to be \emph{indecomposable} if the group
generated by $\{\varphi_x:x\in X\}$ acts transitively on $X$. 

A rack is a \emph{quandle} if $x\triangleright x=x$ for all $x\in X$. 


In this work, we deal with a particular family of racks. 
An \emph{affine (or Alexander) rack} is a triple $(A,g,\triangleright)$, 
where $A$ is an abelian group, $g\in\Aut(A)$ and
$(a,b)\mapsto a\triangleright b$ is the binary operation
on $A$ given by 
$a\triangleright b=(\id-g)(a)+g(b)$ for all $a,b\in A$. 
In this case, we denote this rack by
$\Aff(A,g)$. 

An affine 
rack $\Aff(A,g)$ is \emph{indecomposable}
if and only if $\id-g$ is surjective. 

In this work, the following family 
of affine racks will be crucial. 
Let $p$ be a prime number
and $\Z/p\Z$ be the ring of integers modulo $p$. For 
$\alpha\in\Z/p\Z\setminus\{0,1\}$, 
$\Aff(p,\alpha)$ denotes the affine rack
$\Aff(\Z/p\Z,g)$, where 
\[
g\colon\Z/p\Z\to\Z/p\Z,
\quad 
g(x)=\alpha x.
\]
Then $\Aff(p,\alpha)$ is an affine indecomposable rack. 


The following lemmas give some information on the structure of groups generated by a conjugacy class. We will need this when studying Yetter--Drinfeld modules over such groups. For any element $g$ of a group $G$, we write $\langle g\rangle $ for the subgroup generated by $g$.

\begin{lem} 
\label{lem:GXmonomialsordered}
    Let $G$ be a group generated by a finite conjugacy class $X$,  let $m$ be the order of the conjugation action of any $x\in X$ on $X$, let $<$ be a total order of $X$, and let $z$ be the maximal element of $(X,<)$. Then for all $n\ge 0$, every element of $X^n\subseteq G$ is of the form $x_1^{n_1}x_2^{n_2}\cdots x_k^{n_k}z^l$ with $k,l\ge 0$, $n_1+n_2+\cdots +n_k+l=n$, $x_1<x_2<\cdots <x_k<z$, and $1\le n_i\le m-1$ for all $1\le i\le k$.  
\end{lem}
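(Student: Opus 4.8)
The plan is a nested induction. First I record the structural fact that makes everything work. Writing $\varphi_x\colon X\to X$, $y\mapsto xyx^{-1}$, the hypothesis is that every $\varphi_x$ has order $m$ (these permutations are pairwise conjugate in $\mathrm{Sym}(X)$, $X$ being a conjugacy class, so ``$m$'' really is independent of $x$). Hence $x^{m}y\,x^{-m}=y$ for all $x,y\in X$; since $X$ generates $G$ this says $x^m\in Z(G)$, and as the $x^m$ are conjugate to one another they all coincide. Set $c:=z^m\in Z(G)$. The only group-theoretic moves I will make are the degree-preserving identities $ab=(a\triangleright b)\,a$ (with $a\triangleright b=aba^{-1}$ for $a,b\in X$) and, iterating, $y\,x^{j}=x^{j}\,\varphi_x^{-j}(y)$, together with the replacement $x^m=c=z^m$ used to turn an exponent that reaches $m$ on a non-maximal letter into the central factor $c$, which then slides to the far right inside the power of $z$.

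Write $S_n\subseteq G$ for the set of elements of the asserted shape, so that the claim is $X^n\subseteq S_n$. The outer induction is on $n$. Granting $X^{n'}\subseteq S_{n'}$ for all $n'<n$, the inclusion $X^n\subseteq S_n$ follows from $X^n=X\cdot X^{n-1}$ once we show $X\cdot S_{n-1}\subseteq S_n$, which we extract from the auxiliary statement
\[
Q(n):\quad x^{t}F\in S_n\ \text{ for every }x\in X,\ t\in\{1,\dots,m-1\}\text{ and }F\in S_{n-t}
\]
taken with $t=1$. For $x=z$ (and any $t\ge1$) the statement $Q(n)$ is easy and uses only the outer hypothesis: writing $F=x_1^{n_1}\cdots x_k^{n_k}z^{l}$, push $z^{t}$ to the right using $z^{t}x_i^{n_i}=\varphi_z^{t}(x_i)^{n_i}z^{t}$; this leaves the element $\varphi_z^{t}(x_1)^{n_1}\cdots\varphi_z^{t}(x_k)^{n_k}\in X^{\,n-t-l}$, which lies in $S_{n-t-l}$ by induction, followed by $z^{t+l}$.

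The substance is $Q(n)$ for $x\ne z$, which I prove by a secondary induction on the rank $\mathrm{wt}(x)$ of $x$ in the finite chain $(X,<)$. Let $F=x_1^{n_1}\cdots x_k^{n_k}z^{l}\in S_{n-t}$. If $k=0$, or $x<x_1$, or $x=x_1$, then $x^{t}F$ is immediately in $S_n$: a new leftmost column $x^{t}$ (note $x<z$), or a merge into the first column, with $x_1^{m}$ replaced by $c=z^m$ and moved to the right if the exponent hits $m$. If $x>x_1$, push $x^{t}$ past the first column: $x^{t}F=x_1^{n_1}\bigl((x')^{t}F'\bigr)$ with $x':=\varphi_{x_1}^{-n_1}(x)\in X$ and $F':=x_2^{n_2}\cdots x_k^{n_k}z^{l}\in S_{n-t-n_1}$. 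Because $n-n_1<n$, the outer hypothesis (the statement $Q(n-n_1)$, whose $x'=z$ instance is the easy case above) yields $(x')^{t}F'=\widetilde F$ for some $\widetilde F\in S_{n-n_1}$; then $x^{t}F=x_1^{n_1}\widetilde F$ is an instance of $Q(n)$ with the letter $x_1$, and $x_1<x$ gives $\mathrm{wt}(x_1)<\mathrm{wt}(x)$, so the secondary hypothesis closes the step.

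The one thing that genuinely requires an argument — and the reason the secondary induction is the right device — is termination of this straightening: whenever the incoming column cannot be placed at once, the letter being inserted is replaced by the strictly smaller letter $x_1=\min\{x_1,\dots,x_k\}$, and a strictly decreasing chain in the finite set $(X,<)$ has length at most $|X|$. The remaining content is bookkeeping: every move preserves the total degree (both $ab=(a\triangleright b)a$ and $x^m=z^m$ do), the exponents on the non-maximal columns stay in $\{1,\dots,m-1\}$ thanks to the trade $x_i^{m}=c$, and the base cases $n\le1$ and $\mathrm{wt}(x)=1$ — where $x=\min X\le x_1$, so no reduction is needed — are immediate. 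I expect the index-chasing in the case $x>x_1$ to be the most delicate part to write out carefully.
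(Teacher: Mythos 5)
Your proof is correct and rests on exactly the same two facts as the paper's: the swap $ab=b\,(b^{-1}ab)$, which moves the smaller letter of a conjugate pair to the left while keeping both letters in $X$, and the identity $x^m=z^m\in Z(G)$, used to cap the exponents of the non-maximal letters. The paper organizes the rewriting more economically — it notes that each swap strictly decreases the word in the lexicographic order on words of length $n$ over the finite alphabet $(X,<)$, so termination is immediate and your nested induction (outer on $n$, inner on the rank of the inserted letter) is not needed; apart from this bookkeeping the two arguments coincide.
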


See Dietzmann's theorem in \cite[Theorem 5.10]{MR2426855} and 
Lemmas 2.18 and 2.19 in \cite{MR2803792} for related claims.

\begin{proof}
  The claim is trivial for $n\le 1$. Let $n\in \ndN$ with $n\ge 2$. Then in any $x_1x_2\cdots x_n\in X^n$, a factor $x_ix_{i+1}\in X^2$ with $1\le i\le k-1$ and $x_{i+1}<x_i$ can be replaced by the lexicographically smaller factor $x_{i+1}(x_{i+1}^{-1}x_ix_{i+1}) \in X^2$. We now prove that $x^m=z^m$ for all $x\in X$. Then using that $z^m$ is in the center of $G$, we can achieve that the exponents of the factors $x_i\ne z$ are at most $m-1$.
  
  Note that $x^my=yx^m=(yxy^{-1})^my$ for all $x,y\in X$, Hence $x^m=(yxy^{-1})^m$ for all $x,y\in X$. It follows that $x^m=(gxg^{-1})^m$ for all $x\in X$ and $g\in G$, since $G$ is generated by $X$. Moreover, $X$ is a conjugacy class of $G$, and hence $x^m=z^m$ for all $x\in X$. This completes the proof of the lemma.
\end{proof}

%
%

Our main objects of interest are Yetter--Drinfeld modules of 
prime dimension over non-abelian groups.

\begin{lem} \label{lem:Gcommfinite}
    Let $G$ be a group generated by a finite conjugacy class $X$. Then the derived subgroup $[G,G]$ of $G$ is finite and is generated by the elements $xy^{-1}$ with $x,y\in X$. Moreover, for all $z\in X$,
    \[ G=[G,G]\langle z\rangle\quad \text{and} \quad
    C_G(z)=\big([G,G]\cap C_G(z)\big)\langle z\rangle .
    \]
\end{lem}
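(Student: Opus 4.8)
The plan is to exploit the fact, already essentially contained in Lemma~\ref{lem:GXmonomialsordered} and its proof, that the $m$-th power $z^m$ is central, and that every element of $G$ can be rewritten as a product of elements of $X$ times a power of $z$. I would first establish that $[G,G]$ is generated by the elements $xy^{-1}$ with $x,y\in X$. For this, note that since $G=\langle X\rangle$, the abelianization $G^{\mathrm{ab}}=G/[G,G]$ is generated by the images of the elements of $X$; but $X$ is a single conjugacy class, so all these images coincide in $G^{\mathrm{ab}}$. Hence $G^{\mathrm{ab}}$ is cyclic, generated by the image of any fixed $z\in X$, which immediately gives $G=[G,G]\langle z\rangle$. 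Moreover, the kernel of the surjection from the free product (or from $G$) onto $G^{\mathrm{ab}}$ is the normal closure of the $xy^{-1}$; since each $xy^{-1}$ is a commutator-like element and $[G,G]$ is the smallest normal subgroup with cyclic quotient generated by $\bar z$, a short argument shows $[G,G]$ is generated \emph{as a subgroup} (not merely as a normal subgroup) by the $xy^{-1}$, $x,y\in X$ — here one uses that conjugating $xy^{-1}$ by an element of $X$ again lands in the subgroup generated by all such differences, because $g x g^{-1}\in X$ for $g\in X$.

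Next I would prove finiteness of $[G,G]$. The key input is that $z^m$ is central (shown in the proof of Lemma~\ref{lem:GXmonomialsordered}), so $\langle z^m\rangle$ is a central, hence normal, subgroup, and $G/\langle z^m\rangle$ is generated by the image of the conjugacy class $X$, in which every element now has finite order dividing $m$. Thus $G/\langle z^m\rangle$ is a group generated by a conjugacy class of elements of bounded finite order; by Dietzmann's theorem (cited in the excerpt, \cite[Theorem 5.10]{MR2426855}), the normal subgroup generated by this finite conjugacy class of torsion elements is finite — and that normal subgroup is all of $G/\langle z^m\rangle$. Hence $G/\langle z^m\rangle$ is finite. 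Since $z^m$ is central, $\langle z^m\rangle$ is abelian, so it maps to $0$ in passing to $[G,G]$: more precisely, $[G,G]\cap\langle z^m\rangle$ is contained in the center, and the composite $[G,G]\hookrightarrow G\twoheadrightarrow G/\langle z^m\rangle$ has kernel $[G,G]\cap\langle z^m\rangle$ which is central in $[G,G]$; combined with $G/\langle z^m\rangle$ being finite, one concludes $[G,G]$ is finite. Actually the cleanest route: $[G,G]\langle z^m\rangle/\langle z^m\rangle$ is a subgroup of the finite group $G/\langle z^m\rangle$, hence finite, and it equals $[G,G]/([G,G]\cap\langle z^m\rangle)$; since $[G,G]\cap\langle z^m\rangle$ is a subgroup of the cyclic group $\langle z^m\rangle$ and is also \emph{finitely generated and torsion} — indeed it lies in the finite group $[G,G]$ if we already knew that, so instead argue that $[G,G]$ is generated by finitely many elements $xy^{-1}$ each of which has finite order in $G/\langle z^m\rangle$ and hence, being a commutator, a power lies in $\langle z^m\rangle$; this forces $[G,G]\cap\langle z^m\rangle$ to have finite index in $\langle z^m\rangle$ making $[G,G]$ a finite extension of a finite group, hence finite.

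Finally, for the centralizer statement $C_G(z)=\big([G,G]\cap C_G(z)\big)\langle z\rangle$, the inclusion $\supseteq$ is clear since $z\in C_G(z)$. For $\subseteq$, take $c\in C_G(z)$ and use $G=[G,G]\langle z\rangle$ to write $c=h z^k$ with $h\in[G,G]$; then $z^k\in C_G(z)$ automatically, so $h=cz^{-k}\in C_G(z)$ as well, giving $h\in[G,G]\cap C_G(z)$ and $c=hz^k$ as required. I expect the main obstacle to be the careful bookkeeping in the finiteness argument: one must be a little careful about whether one is using Dietzmann's theorem for a conjugacy class of torsion elements of bounded order (which is exactly its hypothesis) and about the fact that the $xy^{-1}$ become torsion only after passing mod $\langle z^m\rangle$. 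Organizing this so that the logical dependencies (centrality of $z^m$ $\Rightarrow$ finiteness of $G/\langle z^m\rangle$ $\Rightarrow$ finiteness of $[G,G]$, and separately the generation statement via cyclicity of $G^{\mathrm{ab}}$) do not become circular is the delicate point; everything else is routine.
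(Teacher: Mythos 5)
Your treatment of the generation statement and of the centralizer identity is correct and essentially the paper's: the set $\{xy^{-1}:x,y\in X\}$ is stable under conjugation by elements of $X$ (hence by $G$), each $xy^{-1}$ is a commutator, the quotient by the subgroup they generate is cyclic on $\bar z$, and the decomposition $c=hz^k$ with $h\in [G,G]\cap C_G(z)$ is exactly how the paper deduces $C_G(z)=\big([G,G]\cap C_G(z)\big)\langle z\rangle$ from $G=[G,G]\langle z\rangle$. The first half of your finiteness argument is also sound: $z^m$ is central, the image of $X$ in $G/\langle z^m\rangle$ is a finite conjugation-closed set of elements of order dividing $m$, and Dietzmann's theorem then makes $G/\langle z^m\rangle$ finite.

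The gap is in the final step, from finiteness of $G/\langle z^m\rangle$ to finiteness of $[G,G]$. What you actually need is that $[G,G]\cap\langle z^m\rangle$ is \emph{finite}, and your justification --- a power of each generator $xy^{-1}$ lies in $\langle z^m\rangle$, hence $[G,G]\cap\langle z^m\rangle$ has finite index in $\langle z^m\rangle$, ``making $[G,G]$ a finite extension of a finite group'' --- conflates finite index with finiteness. The hypotheses do not force $z$ to have finite order: for $G=(\ndZ/3\ndZ)\rtimes \ndZ=\langle a,t\mid a^3=1,\ tat^{-1}=a^{-1}\rangle$, the finite conjugacy class $\{t,at,a^2t\}$ generates $G$, $m=2$, and $\langle z^m\rangle=\langle t^2\rangle\cong\ndZ$ is infinite. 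In that example the powers $(xy^{-1})^k$ that land in $\langle t^2\rangle$ are all trivial, so your intermediate claim of finite index is itself false; and had some such power been a nontrivial element of $\langle z^m\rangle$, finite index in an infinite cyclic group would give an \emph{infinite} subgroup of $[G,G]$, not a finite one. (Your first version of this step is, as you concede, circular.) The cleanest repair of your route is to note that $\langle z^m\rangle$ is a central subgroup of finite index, so $G/Z(G)$ is finite and Schur's theorem gives $[G,G]$ finite. The paper avoids the issue entirely by a normal-form argument: using Lemma~\ref{lem:GXmonomialsordered} it writes every element of $[G,G]$ as $\bar x z^l\bar y^{-1}$ with $\bar x,\bar y$ ordered monomials in $X\setminus\{z\}$ having exponents at most $m-1$ and with $l=|y|-|x|$ determined by $\bar x,\bar y$, exhibiting $[G,G]$ as a subset of an explicitly finite set. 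Either fix works, but as written your finiteness step does not.
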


\begin{proof}
  Since $G$ is generated by the conjugacy class $X$, the group $[G,G]$ is generated by the elements $uyu^{-1}y^{-1}$ with $u,y\in X$, and since $uyu^{-1}\in X$, by the elements $xy^{-1}$ with $x,y\in X$.

  Since $(xy^{-1})^{-1}=yx^{-1}$ and $y^{-1}x=(y^{-1}xy)y^{-1}$ for all $x,y\in X$, 
  \[ [G,G]=\{ x_1x_2\cdots x_n(y_1y_2\cdots y_n)^{-1} : 
  n\ge 0, x_1,x_2,\dots,x_n,y_1,y_2,\dots,y_n\in X\}. \]
  
  Let $z\in X$. Since
  \[ x=xz^{-1}z\in [G,G]z \quad \text{and} \quad x^{-1}=x^{-1}zz^{-1}\in [G,G]z^{-1}
  \]
  for all $x\in X$, it follows that $G=[G,G]\langle z\rangle $. Hence $C_G(z)=\big([G,G]\cap C_G(z)\big)\langle z\rangle $.

  It remains to prove that $[G,G]$ is finite.
  Let $<$ be a total order of $X$ with maximal element $z$. By Lemma~\ref{lem:GXmonomialsordered}, in this description, we may assume that the monomials $x_1x_2\cdots x_n$ and $y_1y_2\cdots y_n$ are ordered and do not contain factors of the form $x^m$, $x\in X\setminus \{z\}$, where $m$ is the order of the conjugation action of any $x\in X$ on $X$. Thus, $[G,G]$ is the set of all elements $\bar{x}z^l\bar{y}^{-1}$, where $\bar{x},\bar{y}$ are ordered monomials in the letters $X\setminus \{z\}$ without factors $x^m$, $x\in X\setminus \{z\}$, and $l=|y|-|x|$. This set is finite.
\end{proof}

\begin{lem}
\label{lem:GXformulas}
    Let $G$ be a group and let $X$ be a conjugacy class of $G$. Assume that $X$ generates $G$, and that there is a rack isomorphism $\varphi :\Aff(p,\alpha)\to X$ for some prime $p$ and some $\alpha \in \ndZ/p\ndZ \setminus \{0,1\}$. For all $i\in \ndZ/p\ndZ$ let $g_i=\varphi(i)$, and let $\gamma =g_0g_1^{-1}$.


    \begin{enumerate}
       \item The following relations hold in $G$:
    \begin{align*}
      g_xg_y^{-1} &=g_{x+z}g_{y+z}^{-1},\\
      g_0g_k^{-1}&=\gamma ^k,\\
      g_x\gamma &=\gamma ^\alpha g_x
    \end{align*}
    for all $x,y,z\in \ndZ/p\ndZ $ and $k\geq0$.
   \item The derived subgroup of $G$ is cyclic of order $p$ and is generated by $\gamma $.
  \end{enumerate}
\end{lem}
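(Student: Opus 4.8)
The plan is to verify the relations in part~(1) directly from the rack structure on $X=\Aff(p,\alpha)$, and then deduce part~(2) by combining these relations with Lemma~\ref{lem:Gcommfinite}. Recall that the rack operation on $\Aff(p,\alpha)$ is $i\triangleright j=(1-\alpha)i+\alpha j$, and since $\varphi$ is a rack isomorphism onto the conjugacy class $X$, we have $g_ig_jg_i^{-1}=g_{(1-\alpha)i+\alpha j}$ for all $i,j\in\ndZ/p\ndZ$. First I would establish the translation-invariance relation $g_xg_y^{-1}=g_{x+z}g_{y+z}^{-1}$. The cleanest route is to show that conjugation by $g_0$ acts on the indices as $j\mapsto \alpha j$, and more generally that conjugation by $g_i$ sends $g_j$ to $g_{(1-\alpha)i+\alpha j}$; applying this with a suitable element and using that $\id-g$ is surjective on $\ndZ/p\ndZ$ (so every element is a ``difference'' $x-y$ realized by the rack action) lets one shift both indices simultaneously. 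Concretely, pick $w$ with $(1-\alpha)w=z$, i.e. $w=z/(1-\alpha)$; then conjugating the identity $g_xg_y^{-1}$ by $g_w$ gives $g_{(1-\alpha)w+\alpha x}g_{(1-\alpha)w+\alpha y}^{-1}=g_{z+\alpha x}g_{z+\alpha y}^{-1}$, and since $x\mapsto \alpha x+z$ is a bijection of $\ndZ/p\ndZ$ one gets the stated translation invariance after reparametrizing. (Alternatively, one argues that $g_ig_j^{-1}=g_{i'}g_{j'}^{-1}$ whenever $i-j=i'-j'$ by a short induction using $g_ig_{i-1}^{-1}$ as a building block.)

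Next I would prove $g_0g_k^{-1}=\gamma^k$ by induction on $k\ge 0$, where $\gamma=g_0g_1^{-1}$. The base cases $k=0,1$ are immediate. For the inductive step, write $g_0g_{k+1}^{-1}=(g_0g_k^{-1})(g_kg_{k+1}^{-1})$ and use translation invariance to identify $g_kg_{k+1}^{-1}$ with $g_0g_1^{-1}=\gamma$ — this requires that the index shift by $1$ is achievable, which holds since in $\ndZ/p\ndZ$ with $p$ prime the element $1$ lies in the image of $\id-g$ precisely when... in fact translation invariance $g_xg_y^{-1}=g_{x+z}g_{y+z}^{-1}$ for \emph{all} $z$ is what part~(1) already asserts, so $g_kg_{k+1}^{-1}=g_0g_1^{-1}=\gamma$ directly, completing the induction. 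For the third relation $g_x\gamma=\gamma^\alpha g_x$, I would compute $g_x\gamma g_x^{-1}=g_x g_0 g_1^{-1}g_x^{-1}=(g_xg_0g_x^{-1})(g_xg_1g_x^{-1})^{-1}=g_{(1-\alpha)x+\alpha\cdot 0}\,g_{(1-\alpha)x+\alpha\cdot 1}^{-1}=g_{(1-\alpha)x}g_{(1-\alpha)x+\alpha}^{-1}$, and then apply translation invariance (shift by $-(1-\alpha)x$) to rewrite this as $g_0g_\alpha^{-1}=\gamma^\alpha$ by the second relation. Hence $g_x\gamma g_x^{-1}=\gamma^\alpha$, which is the claim.

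For part~(2): by Lemma~\ref{lem:Gcommfinite}, $[G,G]$ is generated by the elements $g_xg_y^{-1}$ with $x,y\in\ndZ/p\ndZ$. By translation invariance every such element equals $g_0g_{y-x}^{-1}=\gamma^{y-x}$ (using the second relation, valid for the representative in $\{0,1,\dots,p-1\}$), so $[G,G]=\langle\gamma\rangle$ is cyclic. It remains to pin down its order. On one hand $\gamma^p=g_0g_p^{-1}=g_0g_0^{-1}=1$ since indices are taken mod $p$, so the order divides $p$; on the other hand $\gamma\ne 1$, for otherwise $g_0=g_1$, contradicting that $\varphi$ is injective (the $g_i$ are pairwise distinct). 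Since $p$ is prime, the order of $\gamma$ is exactly $p$, proving $[G,G]\cong\ndZ/p\ndZ$. The main obstacle is purely organizational: getting the index bookkeeping for translation invariance right — in particular making sure the shift parameter $z$ genuinely ranges over all of $\ndZ/p\ndZ$ (which uses indecomposability, equivalently surjectivity of $\id-g$, guaranteed by $\alpha\ne 1$ together with $p$ prime) — after which every other step is a one-line manipulation of the three relations.
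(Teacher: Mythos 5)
Your overall skeleton matches the paper's proof (translation invariance of $g_xg_y^{-1}$, then the $\gamma^k$ formula, then $g_x\gamma=\gamma^\alpha g_x$, then part (2) via Lemma~\ref{lem:Gcommfinite}), but the crucial first step is not established correctly, and everything else depends on it. You derive the translation invariance by conjugating $g_xg_y^{-1}$ by $g_w$: this yields
\[
g_w\,(g_xg_y^{-1})\,g_w^{-1}=g_{(1-\alpha)w+\alpha x}\,g_{(1-\alpha)w+\alpha y}^{-1},
\]
which says the two elements are \emph{conjugate}, not \emph{equal}. They are in fact generally different: once the lemma is proved one sees that $g_xg_y^{-1}=\gamma^{y-x}$ while the conjugate equals $\gamma^{\alpha(y-x)}$, and conjugation acts nontrivially on $\langle\gamma\rangle$ since $g_x\gamma g_x^{-1}=\gamma^\alpha$ with $\alpha\ne 1$. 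Note also that your conjugation does not preserve the index difference (it sends $x-y$ to $\alpha(x-y)$) and shifts the two indices by different amounts ($z+(\alpha-1)x$ versus $z+(\alpha-1)y$), so even ``after reparametrizing'' it cannot produce the identity $g_xg_y^{-1}=g_{x+z}g_{y+z}^{-1}$. Your parenthetical alternative has the same gap: the ``building block'' $g_ig_{i-1}^{-1}$ being independent of $i$ is precisely the basic case of translation invariance, and you give no argument for it.

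The correct mechanism (as in the paper) is a \emph{rewriting}, not a conjugation: from $g_ig_jg_i^{-1}=g_{i\triangleright j}$ one gets, by inverting, $g_ig_j^{-1}=g_{i\triangleright j}^{-1}g_i$, and then applying the analogous identity for conjugation by $g_{i\triangleright j}^{-1}$ turns $g_{i\triangleright j}^{-1}g_i$ back into the form $g_ag_b^{-1}$ with $a=i+(1-\alpha)(i-j)$ and $b=j+(1-\alpha)(i-j)$. This genuinely shifts \emph{both} indices by the same amount $(1-\alpha)(i-j)$ while preserving equality in $G$; iterating, and using that $(1-\alpha)(i-j)\ne 0$ generates $\ndZ/p\ndZ$ when $i\ne j$ (the case $i=j$ being trivial), gives $g_xg_y^{-1}=g_{x+z}g_{y+z}^{-1}$ for all $z$. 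With that identity in hand, your telescoping argument for $g_0g_k^{-1}=\gamma^k$, your computation of $g_x\gamma g_x^{-1}=g_{(1-\alpha)x}g_{(1-\alpha)x+\alpha}^{-1}=g_0g_\alpha^{-1}=\gamma^\alpha$, and your proof of part (2) ($[G,G]=\langle\gamma\rangle$, $\gamma^p=1$, $\gamma\ne1$ since $g_0\ne g_1$, hence order exactly $p$) are all correct and agree with the paper.
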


\begin{proof}
We write $\triangleright $ for the rack action of $X$.

(1) 
Since $\varphi $ is a rack isomorphism, we obtain that
\begin{align*}
 g_i\triangleright g_j&=g_{(1-\alpha)i+\alpha j}=g_{j+(1-\alpha)(i-j)},\\
 g_i^{-1}\triangleright g_j &=g_{j+(1-\alpha ^{-1})(i-j)}
\end{align*}
for all $i,j\in \ndZ/p\ndZ$.
Therefore, for all $i,j\in \ndZ/p\ndZ $ the relations
    \[ g_ig_j^{-1}=g_{i\triangleright j}^{-1}g_i
    =g_{i+(1-\alpha^{-1})((i\triangleright j)-i)}g_{i\triangleright j}^{-1}
    =g_{i+(1-\alpha )(i-j)}g_{j+(1-\alpha)(i-j)}^{-1} \]
    hold in $G$.
    Since the difference of $i+(1-\alpha)(i-j)$ and $j+(1-\alpha)(i-j)$ is
    $i-j$, it follows that 
    \[ 
    g_ig_j^{-1}=g_{i+k(\alpha -1)(i-j)}g_{j+k(\alpha-1)(i-j)}^{-1} 
    \]
    for all $k\in \mathbb{Z}$. Since $\alpha\ne 1$, we conclude that 
    $g_xg_y^{-1}=g_{x+z}g_{y+z}^{-1}$ for all $x,y,z\in \ndZ/p\ndZ$.
    As a direct consequence,
    \[ (g_0g_1^{-1})^k=g_0g_1^{-1}g_1g_2^{-1}\cdots g_{k-1}g_k^{-1} =g_0g_k^{-1}
    \]
    for all $k\ge 0$. Moreover,
    for all $x\in \ndZ/p\ndZ$ we obtain that
    \[ g_x\gamma =g_xg_0g_1^{-1}
    =g_{x\triangleright 0}g_{x\triangleright 1}^{-1}g_x=g_{(1-\alpha)x}g_{(1-\alpha)x+\alpha }^{-1}g_x=g_0g_\alpha^{-1}g_x=\gamma^\alpha g_x.\]
    
(2) By Lemma~\ref{lem:Gcommfinite}, the derived subgroup of $G$ is generated by the elements $g_xg_y^{-1}$ with $x,y\in X$. Thus $[G,G]$ is generated by $\gamma $ because of (1). Since $\gamma ^p=1$ and since $g_0\ne g_1$ in $G$, the order of $\gamma$ in $G$ is $p$.
\end{proof}

\begin{thm}[Etingof--Guralnick--Soloviev]
\label{thm:EGS}
    Any indecomposable quandle with a prime number
    of elements is isomorphic to $\Aff(p,\alpha)$ 
    for some prime number $p$ and $\alpha\in\Z/p\Z\setminus\{0,1\}$. 
\end{thm}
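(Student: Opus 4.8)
The plan is to reduce the statement to a counting/structure problem about connected components and then quote the Etingof--Guralnick--Soloviev classification of the transitive action of the structure group. First I would recall the standard fact that an indecomposable quandle $X$ with $|X|=p$ prime is in particular \emph{connected} (indecomposable), so the inner automorphism group $G=\langle \varphi_x:x\in X\rangle$ acts transitively on $X$. Fix a base point $x_0\in X$ and let $H=\operatorname{Stab}_G(x_0)$, so $|G/H|=p$; since $X$ is a quandle, $\varphi_{x_0}$ fixes $x_0$ and hence lies in $H$, which is the crucial extra input distinguishing the quandle case from the rack case. The goal is to show $G$ is metacyclic of the shape $\Z/p\Z \rtimes C$ with $C$ cyclic acting by multiplication by some $\alpha$, because such $G$ together with the chosen orbit data reconstructs precisely $\Aff(p,\alpha)$.

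The key steps, in order, are as follows. (i) Show that $G$ has a normal subgroup $N$ of order $p$: argue via the permutation representation $G\hookrightarrow S_p$ that a transitive group of prime degree $p$ containing an element acting with a fixed point (namely $\varphi_{x_0}$, which is nontrivial on $X$ unless $X$ is trivial) is either solvable — hence contained in the normalizer of a Sylow $p$-subgroup, i.e. the affine group $\operatorname{AGL}(1,p)=\Z/p\Z\rtimes (\Z/p\Z)^\times$ — or is one of the known $2$-transitive groups of prime degree; the quandle condition $x\triangleright x=x$ plus connectedness rules out the latter by a direct argument on the cycle structure. (ii) Having placed $G$ inside $\operatorname{AGL}(1,p)$, identify $X$ with $G/H$ as a $G$-set and transport the rack structure: the rack operation on a connected rack is determined by $G$, the point $x_0$, and the element $\varphi_{x_0}\in H$ through the formula $x\triangleright y \leftrightarrow g\varphi_{x_0}g^{-1}$ acting on the coset of $y$. (iii) Compute: since $\varphi_{x_0}$ corresponds to an element of $\operatorname{AGL}(1,p)$ fixing $x_0$, conjugation identifies $X$ with $\Z/p\Z$ and $\varphi_{x_0}$ with the map $t\mapsto \alpha t$ for some $\alpha\in(\Z/p\Z)^\times$; connectedness forces $\alpha\ne 1$, and $\alpha=0$ is impossible since $\varphi_{x_0}$ is bijective, so $\alpha\in\Z/p\Z\setminus\{0,1\}$ and the rack is exactly $\Aff(p,\alpha)$.

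The main obstacle will be step (i): ruling out the $2$-transitive (almost simple) groups of prime degree. The list of such groups is classical (cyclic of order $p$, subgroups of $\operatorname{AGL}(1,p)$, and sporadically $\mathrm{PSL}_n(q)$ on points of projective space when $(q^n-1)/(q-1)$ is prime, plus a handful like $M_{11}, M_{23}$), and one must show none of these carries a connected quandle structure of the required type. The cleanest route is not to inspect the list directly but to use the quandle axiom more forcefully: in a connected quandle the point stabilizers are all conjugate and each contains the corresponding $\varphi_x$, and a short argument shows the subgroup generated by the $\varphi_x$ with its conjugation action is a \emph{quotient} of a known solvable configuration, forcing $G$ itself to be solvable; solvable transitive groups of prime degree are contained in $\operatorname{AGL}(1,p)$ by Galois's classical theorem, which closes the gap. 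Once solvability is in hand, steps (ii)--(iii) are routine coset bookkeeping.
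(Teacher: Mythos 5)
The paper does not prove this statement at all: its ``proof'' is the single line ``It follows from [Etingof--Guralnick--Soloviev, Theorems 2.5 and 3.1]'', i.e.\ the result is imported wholesale. Your proposal is therefore an attempt to reconstruct the EGS argument itself, and its skeleton is the right one: pass to the inner group $G=\langle \varphi_x\rangle\le S_p$, invoke Burnside's dichotomy (a transitive group of prime degree is either contained in $\mathrm{AGL}(1,p)$ or is $2$-transitive), and in the solvable case read off the affine structure via the coset representation $X\cong G/H$ with $\varphi_{x_0}\in H$. Steps (ii)--(iii) are indeed routine once $G\le \mathrm{AGL}(1,p)$ is known, and your observation that $\varphi_{x_0}$ lies in the point stabilizer is the correct extra input (in fact more is true: $\varphi_{gx_0}=g\varphi_{x_0}g^{-1}$ forces $\varphi_{x_0}$ to be \emph{central} in $\mathrm{Stab}_G(x_0)$, which is the lever the actual proof uses).

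The genuine gap is exactly where you locate the ``main obstacle'': eliminating the $2$-transitive case. Neither of the two routes you offer is a proof. A ``direct argument on the cycle structure'' of $\varphi_{x_0}$ does not obviously rule out, say, $\mathrm{PSL}_2(11)$ on $11$ points or $M_{23}$ on $23$ points; and the claim that ``a short argument shows \dots\ forcing $G$ itself to be solvable'' is precisely the theorem's content in the hard case, asserted rather than established --- as stated it is circular. The known proof (Guralnick's contribution in the cited paper) goes through the classification of $2$-transitive groups of prime degree, which rests on the classification of finite simple groups, and then checks case by case that no almost simple candidate has a point stabilizer with a nontrivial central element generating the group as a conjugacy class. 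There is no known elementary bypass of this step, so your sketch cannot be completed as written; the honest options are either to carry out the case analysis against the CFSG-dependent list (as EGS do) or to do what the paper does and cite them.
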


\begin{proof}
It follows from \cite[Theorems 2.5 and 3.1]{MR1848966}.
\end{proof}

Recall that a Yetter--Drinfeld module $V$ over a group $G$ is absolutely irreducible if and only if its support is a conjugacy class of $G$ and for some (equivalently, any) $x\in G$ with $V_x\ne 0$ the $\fie C_G(x)$-module $V_g$ is absolutely irreducible.

\begin{cor} \label{cor:Vp}
  Let $\fie $ be a field, $G$ be a group, and $V$ be an absolutely irreducible Yetter--Drinfeld module over $\fie G$ of dimension $p$ for some prime $p$. Let
  \[ X=\{ x\in G: V_x\ne 0\}, \]
  and assume that $G$ is generated by $X$.
  Then $X\cong \Aff(p,\alpha )$ for some $\alpha \in \ndZ/p\ndZ$, $\dim V_x=1$ for all $x\in X$, and there is a scalar $\lambda \in \fie ^\times $ and a family $(v_x)_{x\in X}$ of vectors $v_x\in V_x$, such that
  \[ x v_y=\lambda v_{x\triangleright y} \]
  for all $x,y\in X$.
\end{cor}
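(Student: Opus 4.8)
The plan is to combine the structural results on the support with the representation theory of the centralizer. First I would observe that the support $X$ of $V$ is, by the recalled characterization of absolute irreducibility, a conjugacy class of $G$, and that as a rack (with the conjugation operation $x\triangleright y=xyx^{-1}$) it is an indecomposable quandle: indecomposability follows because $V$ is irreducible, so $G$ acts transitively on the homogeneous components, and the quandle axiom $x\triangleright x=x$ is automatic for a conjugacy class. Since $|X|=\dim V$ only if all $\dim V_x$ are equal to $1$ — which I still have to prove — let me instead first note $|X|$ divides $\dim V=p$ and $|X|>1$ (as $G$ is non-abelian, being generated by a conjugacy class with more than one element; if $|X|=1$ then $G$ would be abelian and $V$ one-dimensional, contradicting $p\geq 2$ together with... actually $p$ could be $2$, but then $|X|=1$ forces $\dim V=1\neq 2$). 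Hence $|X|=p$. Then by Theorem~\ref{thm:EGS} (Etingof--Guralnick--Soloviev), $X\cong\Aff(p,\alpha)$ for some $\alpha\in\ndZ/p\ndZ\setminus\{0,1\}$.

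Next I would pin down the module structure. Fix $x_0\in X$ and write $C=C_G(x_0)$. Since $|X|=p=[G:C]$ and $\dim V=p$, and $V=\bigoplus_{x\in X}V_x$ with the $V_x$ permuted transitively by $G$ (the component $V_x$ for $x=g x_0 g^{-1}$ being $g\cdot V_{x_0}$), a dimension count gives $\dim V_x=1$ for every $x\in X$; in particular $V_{x_0}$ is a one-dimensional $\fie C$-module, automatically absolutely irreducible, consistent with the recalled criterion. Now choose $0\neq v_{x_0}\in V_{x_0}$. Using Lemma~\ref{lem:GXformulas}(1), or directly the structure of $\Aff(p,\alpha)$, one has that conjugation by $x_0$ fixes $x_0$ and permutes $X\setminus\{x_0\}$ in a single cycle of length... well, in orbits governed by the order of $\alpha$; in any case I can pick coset representatives and define $v_x$ for $x\in X$ by transporting $v_{x_0}$ along a chosen set-theoretic section of $G\to X$. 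The Yetter--Drinfeld compatibility $\delta(g\cdot w)=g w_{(-1)}g^{-1}\otimes g\cdot w_{(0)}$ forces $x\cdot V_y\subseteq V_{x\triangleright y}$, so there are scalars $\lambda_{x,y}\in\fie^\times$ with $x v_y=\lambda_{x,y}v_{x\triangleright y}$.

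The real content is to show that the scalars $\lambda_{x,y}$ can be made independent of $x$ and $y$ by rescaling the $v_x$, i.e.\ to prove $\lambda_{x,y}=\lambda$ for a single $\lambda$. Here I would use two inputs: the action of $x_0$ on $V_{x_0}$ is multiplication by some fixed $\lambda\in\fie^\times$ (this is the one-dimensional $\fie C$-module structure, and $x_0\in C$), and the braided-vector-space rigidity coming from $X$ being a quandle with $x\triangleright x=x$ together with the relation $x^m$ being central (Lemma~\ref{lem:GXmonomialsordered}) and cyclicity of $[G,G]$ (Lemma~\ref{lem:GXformulas}(2)). Concretely: for any $x\in X$, pick $g\in G$ with $x=g x_0 g^{-1}$ and normalize $v_x:=g\cdot v_{x_0}$ — this is well-defined up to the $\fie C$-action on $v_{x_0}$, hence up to a scalar, and since every element of $C$ acts on $V_{x_0}$ by a scalar, $v_x$ is in fact well-defined independently of the choice of $g$. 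Then $x v_x = (g x_0 g^{-1})(g v_{x_0}) = g\cdot(x_0 v_{x_0}) = \lambda\, g v_{x_0}=\lambda v_x$, so $\lambda_{x,x}=\lambda$ for all $x$. To get $\lambda_{x,y}=\lambda$ in general, apply $h\in G$ with $h\triangleright$ sending the pair $(x,x)$ to $(x, y)$ — possible because the inner automorphism group of the rack $\Aff(p,\alpha)$ acts transitively on $X$ and, fixing the first coordinate to be a given $x$, the stabilizer of $x$ still moves the second coordinate through all of $X$ (since $\varphi_x$ has order the multiplicative order of $\alpha$, which together with translations... here I would instead argue: $\lambda_{x,y}v_{x\triangleright y}=xv_y$, and conjugating by a suitable $g\in C_G(x)$ that sends $y$ to $y'$ shows $\lambda_{x,y}=\lambda_{x,y'}$, so $\lambda_{x,-}$ is constant on each orbit of $C_G(x)$ on $X$; these orbits are $\{x\}$ and the orbits of $\varphi_x$, and a further use of the braid/quandle relation $x\triangleright(x\triangleright y)$ bootstraps constancy to all of $X$). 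The main obstacle I anticipate is exactly this last normalization: ensuring the rescaling is globally consistent — that no monodromy obstruction survives — which is where one must use that $C_G(x_0)$ acts by scalars (absolute irreducibility of $V_{x_0}$) so that the choice of representative $g$ genuinely does not matter, and that $\fie$ being a field (with $V$ absolutely irreducible) leaves no room for a nontrivial cocycle. Once $\lambda_{x,y}\equiv\lambda$ is established, the formula $x v_y=\lambda v_{x\triangleright y}$ is precisely the claim.
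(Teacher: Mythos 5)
Your first half matches the paper: the support is a conjugacy class, hence an indecomposable quandle; $|X|$ divides $\dim V=p$ and $|X|=1$ would force $G$ abelian and $\dim V=1$, so $|X|=p$; Theorem~\ref{thm:EGS} gives $X\cong\Aff(p,\alpha)$ with $\alpha\notin\{0,1\}$; and a dimension count gives $\dim V_x=1$. The gap is in the step you yourself flag as ``the real content.'' You set $v_x:=g\cdot v_{x_0}$ for some $g$ with $x=gx_0g^{-1}$ and assert that, because $C=C_G(x_0)$ acts on $V_{x_0}$ by scalars, the vector $v_x$ is independent of the choice of $g$. This is backwards: if $g'=gc$ with $c\in C$ and $cv_{x_0}=\mu v_{x_0}$, then $g'v_{x_0}=\mu\, gv_{x_0}$, so only the line $\fie v_x$ is well defined, not the vector; and $\mu\ne 1$ does occur (already $c=x_0$ gives $\mu=\lambda$). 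Consequently the scalars $\lambda_{x,y}$ genuinely depend on the chosen section, and for a generic section they are \emph{not} all equal: rescaling $v_x\mapsto\mu v_x$ changes $\lambda_{y,x}$ for $y\ne x$ while leaving $\lambda_{y,x\triangleright y}$ fixed. Your subsequent conjugation argument inherits this problem: conjugating $xv_y=\lambda_{x,y}v_{x\triangleright y}$ by $h$ yields the cocycle relation $\lambda_{h\triangleright x,\,h\triangleright y}\,\lambda_{h,y}=\lambda_{x,y}\,\lambda_{h,\,x\triangleright y}$, which gives constancy only if one already controls the factors $\lambda_{h,\cdot}$ --- exactly what is to be proved. (Your correct observation that $\lambda_{x,x}=\lambda$ for all $x$ is normalization-independent, but it does not bootstrap to off-diagonal pairs by this route.)

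The paper closes this gap by making a \emph{specific} choice of section using the group structure from Lemma~\ref{lem:GXformulas}: with $\gamma=g_0g_1^{-1}$ generating the cyclic derived subgroup of order $p$, and $\beta=(\alpha-1)^{-1}$ in $\ndZ/p\ndZ$, one sets $v_{\varphi(i)}=\gamma^{i\beta}v_0$. The identities $g_0g_k^{-1}=\gamma^k$ and $g_x\gamma=\gamma^\alpha g_x$ then allow a direct computation of $g_iw_j$ showing that the single scalar $\lambda$ (the eigenvalue of $g_0$ on $V_{g_0}$) appears uniformly. To repair your proof you would need to replace ``pick any $g$'' by precisely this choice (or an equivalent one) and verify the formula by computation; as written, the existence of a globally consistent normalization is asserted rather than proved.
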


\begin{proof}
        Let $X$ be the support of $V$. Since $V$ is irreducible, $X$ is a conjugacy class of $G$. Since $\dim V=p$, we obtain that $|X|\in \{1,p\}$. By assumption, $X$ generates $G$. Since $V$ is absolutely irreducible and $\dim (V)=p>1$, the group $G$ can not be abelian and hence $|X|=p$. Thus $X$ is an indecomposable quandle of size $p$, and 
    it follows from Theorem~\ref{thm:EGS} that there is a rack isomorphism
    \[ \varphi : \Aff(p,\alpha )\to X \]
    for some 
    $\alpha \in \ndZ/p\ndZ \setminus \{0,1\}$.
    For all $i\in \ndZ/p\ndZ$ let $g_i=\varphi(i)\in X$, and let $\gamma=g_0g_1^{-1}$.
    Since $|X|=\dim V$, it follows that $\dim V_x=1$ for all $x\in X$.
    Let $w_0\in V_{g_0}$ be a non-zero element, $v_{\varphi(0)}=w_{0}$, and let $\lambda \in \fie $ with $g_0w_0=\lambda w_0$. For all $1\le i\le p-1$ let
    \[ 
    w_i= \gamma ^{i\beta }w_0, 
    \quad v_{\varphi(i)}=w_i,
    \]
    where $\beta\in \Z /p\Z $ with $\beta (\alpha -1)=1$. (Note that the claim of the corollary is on the family $(v_x)_{x\in X}$. We use the vectors $w_i$ in order to simplify the notation.)

    By Lemma~\ref{lem:GXformulas}(1),
    \[ g_iw_j
    =g_i\gamma^{j\beta }w_0
    =\gamma^{j\alpha \beta }g_ig_0^{-1}\lambda w_0
    =\gamma^{j\alpha \beta }g_0g_{p-i}^{-1}\lambda w_0=\gamma^{j\alpha \beta }\gamma ^{p-i}\lambda w_0.
    \]
    Now note that $p-i=(p-i)(\alpha -1)\beta $, and hence
    \[ 
    g_iw_j=\lambda\gamma ^{(j\alpha +i(1-\alpha ) )\beta } w_0=\lambda w_{i\triangleright j}. 
    \]
    This proves the corollary.
\end{proof}

The following lemma will be needed later for technical reasons.

\begin{lem} \label{lem:xr}
  Let $G$ be a group generated by a conjugacy class $x^G$ of an element $x\in G$. Let $m$ be the order of the conjugation action of $x$ on $x^G$. Then for all $r\in \ndN$ with $\gcd(r,m)=1$, $(x^r)^G$ is a conjugacy class of $G$ of size $|x^G|$.   
\end{lem}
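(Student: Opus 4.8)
The plan is to exploit the fact that $x^G$ is a rack — indeed a finite indecomposable quandle — and to understand $(x^r)^G$ through the map $x \mapsto x^r$ at the level of the group generated by the conjugacy class. First I would observe that $x^G$ generates $G$, so by Lemma~\ref{lem:Gcommfinite} the group $[G,G]$ is finite and $G = [G,G]\langle x\rangle$; in particular $G$ is a (typically infinite, but with finite commutator) extension of a finite group by a cyclic group, and every element of $x^G$ has the same behaviour under conjugation. The key structural input is the identity $y^m = z^m$ for all $y,z \in x^G$, established inside the proof of Lemma~\ref{lem:GXmonomialsordered}: the $m$-th power of any element of the conjugacy class is one and the same central element $c = x^m$.

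Next I would identify the conjugacy class $(x^r)^G$. Since conjugation by $G$ permutes $x^G$, for any $g \in G$ we have $g x^r g^{-1} = (gxg^{-1})^r$, so $(x^r)^G = \{\,y^r : y \in x^G\,\}$, the image of $x^G$ under the $r$-th power map. Thus $|(x^r)^G| \le |x^G|$, and the content of the lemma is that this map is injective on $x^G$ (so that the size is exactly $|x^G|$) and that the image is again a single conjugacy class — the latter being automatic from the displayed identity. For injectivity, suppose $y^r = y'^r$ with $y, y' \in x^G$. Writing $y' = gyg^{-1}$, this says $y^r$ commutes with $g$ in a controlled way; the cleanest route is to use $\gcd(r,m) = 1$ to pick integers $s,t$ with $rs + mt = 1$, so that $y = y^{rs+mt} = (y^r)^s (y^m)^t = (y^r)^s c^t$ and likewise $y' = (y'^r)^s c^t = (y^r)^s c^t = y$, using that $c = y^m = y'^m$ is central. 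Hence the $r$-th power map is injective on $x^G$, giving $|(x^r)^G| = |x^G|$.

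The one remaining point is to confirm that $(x^r)^G$ is literally a conjugacy class of $G$, i.e. that it is closed under conjugation and is a single orbit — but that is immediate since $(x^r)^G$ is by definition the $G$-orbit of $x^r$ under conjugation. So the lemma reduces to the injectivity statement, and the work is the Bézout argument above together with the centrality of $x^m$. The main obstacle, such as it is, is simply recalling and citing that $x^m = z^m$ for all members of the class (which lies inside the proof of Lemma~\ref{lem:GXmonomialsordered} rather than in its statement) and being careful that $m$ is the order of the conjugation action, not the order of $x$ itself, so that $x^m$ need not be trivial but is central; once that is in hand the rest is a one-line computation. I would also note in passing that $(x^r)^G$ inherits a rack structure isomorphic to that of $x^G$ via $y \mapsto y^r$, though this is not needed for the stated claim.
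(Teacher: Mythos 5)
Your proposal is correct and follows essentially the same route as the paper: the paper also uses the B\'ezout identity $ar+bm=1$ together with the centrality of $x^m$ (established in the proof of Lemma~\ref{lem:GXmonomialsordered}) to exhibit $y\mapsto y^a x^{-bm}$ as an inverse to the $r$-th power map $x^G\to (x^r)^G$. Your phrasing via injectivity of $y\mapsto y^r$ using $y=(y^r)^s c^t$ with $c=y^m$ the common central $m$-th power is the same computation in a slightly different package.
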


\begin{proof}
  Let $a,b\in \ndZ$ with $ar+bm=1$.
  Then
  \[ (gx^rg^{-1})^a
  =gx^{ar}g^{-1}
  =gx^{1-bm}g^{-1}=gxg^{-1}x^{bm}
  \]
  for all $g\in G$, since $x^m$ is central in $G$.
  Hence the map $(x^r)^G\to x^G$, $y\mapsto y^ax^{-bm}$, is bijective.
\end{proof}

Lastly, for our analysis, the following proposition 
will be crucial. 

\begin{pro}
\label{pro:3+3}
    Let $\fie$ be a field. 
    Let $G$ be a group and 
    $V$ and $W$ be absolutely irreducible Yetter--Drinfeld modules
    over $\fie G$. Assume that the supports of $V$ and $W$ do not commute and
    their union generates $G$. If $\dim\NA(V\oplus W)<\infty$, 
    then 
    \[
    \{\dim V,\dim W\}=\{\{1,3\},\{1,4\},\{2\},\{2,3\},\{2,4\}\}.
    \]
\end{pro}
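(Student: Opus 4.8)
The plan is to obtain the statement from the Heckenberger--Vendramin classification of finite-dimensional Nichols algebras of semisimple non-simple Yetter--Drinfeld modules over non-abelian groups \cite{MR3605018,MR3656477}; the proposition is the dimension-bookkeeping shadow of that classification. Concretely, I would (i) pass to the algebraic closure, (ii) check that the present hypotheses match the hypotheses of the cited theorems, and (iii) read the list of dimension pairs off the resulting tables.

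For (i): since $V$ and $W$ are absolutely irreducible, $V\ot_\fie\bar\fie$ and $W\ot_\fie\bar\fie$ are irreducible Yetter--Drinfeld modules over $\bar\fie G$ with the same supports and the same dimensions as $V$ and $W$, and $\NA(V\oplus W)\ot_\fie\bar\fie\cong\NA\bigl((V\ot_\fie\bar\fie)\oplus(W\ot_\fie\bar\fie)\bigr)$ as graded algebras; hence the Nichols algebra of $V\oplus W$ is finite-dimensional if and only if it is so after extending scalars, and I may assume $\fie=\bar\fie$.

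For (ii): write $X=\supp V$ and $Y=\supp W$. By the description of absolutely irreducible Yetter--Drinfeld modules recalled above, $X$ and $Y$ are conjugacy classes of $G$; by hypothesis $X\cup Y$ generates $G$; and the assumption that the supports do not commute says exactly that $G$ is non-abelian. Together with $\dim\NA(V\oplus W)<\infty$ (which in particular guarantees a finite root system of rank two in the sense of the Weyl groupoid), this is precisely the setting of \cite{MR3605018,MR3656477}: a finite-dimensional Nichols algebra of a semisimple Yetter--Drinfeld module with two absolutely simple isotypic components over a non-abelian group generated by its support. That work gives a finite list of all such pairs $(V,W)$, organised by the braided type of the two blocks and their mutual braiding, with $\dim V$ and $\dim W$ small and explicit in each case.

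For (iii): inspecting the list, the dimension pairs $\{\dim V,\dim W\}$ that occur — i.e.\ for which there is an entry with non-abelian $G$ and finite-dimensional Nichols algebra — are $\{1,3\}$, $\{1,4\}$, $\{2,2\}$, $\{2,3\}$ and $\{2,4\}$, while the remaining rank-two possibilities are discarded because they force $G$ to be abelian or have infinite-dimensional Nichols algebra. (For instance, the cases with $\min\{\dim V,\dim W\}=1$ and the other dimension at most $2$ are impossible: a one-dimensional Yetter--Drinfeld module has central support and a conjugacy class of size at most $2$ consists of commuting elements, so $G=\langle X\cup Y\rangle$ would be abelian.) I expect the main obstacle not to be a new argument but the interface with the literature: one must confirm that ``the supports of $V$ and $W$ do not commute'' is the right reformulation of the non-abelianness hypothesis of \cite{MR3605018,MR3656477}, and then go line by line through the sizeable classification tables there, checking in each that either a hypothesis of the proposition fails or $\{\dim V,\dim W\}$ is one of the five listed pairs.
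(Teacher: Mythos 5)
Your proposal is correct and follows essentially the same route as the paper, which proves the proposition in one line by invoking \cite[Theorem 2.1]{MR3656477} and inspecting \cite[Table 1]{MR3656477}; your additional remarks on scalar extension and on matching the non-commuting-supports hypothesis to the non-abelianness assumption of that classification are sensible sanity checks but not a different argument.
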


\begin{proof}
    This is a consequence of \cite[Theorem 2.1]{MR3656477} by  
    inspection of \cite[Table 1]{MR3656477}.
\end{proof}

At this point it is interesting to mention that Proposition~\ref{pro:3+3} is the main tool in \cite{MR3713037,MR4109132,MR4325965,MR4151588} 
to study Nichols algebras over simple groups. 

\section{Orders of braided vector spaces and Nichols algebras}
\label{ss:orders}

Assume that $\ch(\fie)=0$ and let $H$ be a Hopf algebra over $\fie $ with bijective antipode. To study Nichols algebras of Yetter--Drinfeld modules over $H$, we will use orders of the underlying braided vector spaces. 

\begin{defi} \label{de:YDorder}
Let $V$ be a finite-dimensional Yetter--Drinfeld module over $H$ and let
$R$ be a subring of $\fie $. An $R$-\emph{order of} the braided vector space $V$ is a finitely generated projective $R$-submodule $V_R$ of $V$ such that
\begin{enumerate}
  \item the canonical map $\fie \ot _R V_R\to V$, $\lambda \ot v\mapsto \lambda v$, is bijective, and
  \item
$c(V_R^{\ot 2})\subseteq V_R^{\ot 2}$ and $c^{-1}(V_R^{\ot 2})\subseteq V_R^{\ot 2}$.
\end{enumerate}
\end{defi}

\begin{exa}
  Let $R$ be a subring of $\fie $, which is a Dedekind domain, e.g., the ring of integers $\ndZ[q]$ of a cyclotomic field.  Then finitely generated $R$-submodules of vector spaces over $\fie $ are torsion-free and hence projective. In our applications, in particular in the proof of Theorem~\ref{thm:2primes} and of Theorem~\ref{thm:main}, the $R$-orders will be of this form.
\end{exa}

\begin{exa} Let $R$ be a subring of $\fie $ and let $V$ be a Yetter--Drinfeld module over $H$. Assume that $\dim(H)<\infty $ and let $X$ be a Hopf order of $H$. This means that $X$ is an $R$-order of $H$ as a $\fie$-vector space and that $X$ is closed under multiplication, comultiplication, unit, counit, and the antipode (see \cite{CM1} for precise definitions). 
The dual Hopf algebra $H^*$ admits a dual Hopf order $X^{\star}$. By definition, 
\[
X^{\star}= \{f\in H^*: f(X)\subseteq R\}.
\]
Recall that Yetter--Drinfeld modules can be considered as modules over the Drinfeld double $D(H)$ of $H$. As a coalgebra, $D(H)=(H^{op})^*\ot H$, and as an algebra the multiplication is given  by the rule 
\[
(f\ot a)(g\ot b) = g_{(1)}(S^{-1}(a_{(3)}))g_3(a_{(1)})fg_{(2)}\ot a_{(2)}b;
\]
see \cite[\S IX.4.1]{MR1321145} for more details. 
In particular, by using the formulas for the multiplication and comultiplication, it is straightforward to show that 
the image of $X^{\star}\ot_R X$ inside $D(H)$ provides a Hopf order of $D(H)$ that we will write as $D(X)$. The $R$-matrix is then given by $M=\sum_i e^i\ot e_i\in D(H)$, where $\{e_i\}$ is a basis of $H$ and $\{e_i\}$ denotes the dual basis of $H^*$. This element is necessarily contained in $D(X)$ as well. Indeed, $D(X)$ can be characterized as the set of all elements in $H^*\ot H$ such that their pairing with any element in $X\ot_R X^{\star}$ is in $R$, and it holds that $M(x\ot f ) = f(x)$.   

Let now $v_1,\dots,v_n$ be a basis of $V$. 
Then 
\[
\left\{ \sum_{i=1}^n t_iv_i: t_i\in D(X)\right\}\subseteq V
\]
is an example of an $R$-order of $V$ as a braided vector space.
\end{exa}

Recall 
from \cite[Definition 7.1.13]{MR4164719} 
that for any $V\in\yd{H}$, 
the Nichols algebra of $V$ is
\[ \NA (V)=T(V)/I_V, \]
where $T(V)$ is the tensor algebra of $V$ and
\[ I_V=\bigoplus_{n=2}^\infty \ker (\Delta_{1^n}:T^n(V)\to T^n(V))\subseteq
T(V),
\]
and for each $n\ge 2$, $\Delta_{1^n}$ is the $n$-th symmetrizer morphism.
It is well-known that $I_V$ is an ideal and coideal of $T(V)$.
The Nichols algebra $\NA (V)$ is an $\ndN_0$-graded algebra and coalgebra with homogeneous components
\[ \NA^n (V)=T^n(V)/\ker (\Delta_{1^n}) \]
for all $n\ge 0$. 

Now assume that $V$ is finite-dimensional. Let $R$ be a subring of $\fie $ and let
$V_R$ be an $R$-order of the braided vector space $V$. Then for all $n\in \ndN_0$, 
\[
V_R^{\ot n}:= V_R\ot_R V_R\ot_R\cdots\ot_R V_R
\]
is a finitely generated projective module. Moreover, the inclusion $V_R\to V$ induces
injections $V_R^{\ot n}\to V^{\ot n}$, and isomorphisms
\[ \fie \ot_R V_R^{\ot n}\stackrel{\cong}{\to} V^{\ot n}. \]
Hence
\[ \NA (V_R)=\bigoplus _{n=0}^\infty
\big(V_R^{\ot n}
+\ker(\Delta_{1^n})\big)
/\ker (\Delta_{1^n})
\cong \bigoplus_{n=0}^\infty V_R^{\ot n}
/\big(V_R^{\ot n}\cap \ker (\Delta_{1^n})\big) \]
is a graded braided Hopf algebra over $R$. As an algebra, it is generated in degree one. We call it the \emph{Nichols algebra of $V_R$}. We would like to stress that this Hopf algebra is
one of the basic points of the paper. 

\begin{rem}
    In the literature, authors use different approaches to introduce orders. Alternatively to our definition,
    we could start with an integral domain $R$, a Hopf order $H_R$ over $R$, and a Yetter--Drinfeld module $V_R$ over $H_R$ which is finitely generated projective as an $R$-module. Then $\NA(V_R)$ could be introduced as the quotient of the tensor algebra of $V_R$ by the kernel of $\oplus_n\Delta_{1^n}$. Note however, that our definition of an $R$-order
    of $V$ does not require the existence of an $R$-order of $H$. 
\end{rem}

Let now $\fm $ be a maximal ideal of $R$. Then $R/\fm $ is a field, and for each finitely generated projective $R$-module $M$,
$R/\fm \ot_R M$ is a vector space over $R/\fm $ of dimension $\dim _\fie (\fie \ot_R M)$. (This follows e.g.~from Theorem~1 of \cite[V.2]{MR1727221}; see also the remark on page 111.)
Let $c=c_{V,V}$ be the braiding of $V$.
By Definition~\ref{de:YDorder},
\[ \id \ot c \in \End_{R/\fm}(R/\fm \ot_R V_R^{\ot 2}). \]
It follows that
\[ V_{R,\fm}:=R/\fm \ot_R V_R \]
is a braided vector space
and
$R/\fm \ot_R \NA (V_R)$
is an $\ndN_0$-graded braided Hopf algebra
with degree one part $V_{R,\fm}$. As an algebra, $R/\fm \ot_R \NA (V_R)$ is generated by $V_{R,\fm}$. Hence $R/\fm \ot _R \NA (V_R)$ is a pre-Nichols algebra of $V_{R,\fm}$.
In other words, we have a canonical surjection $$R/\fm\ot_R \NA(V_R)\to \NA(V_{R,\fm}).$$ 
The analysis in this section is based on the observation that the above surjection might not be injective, which means that it is possible that $R/\fm\otimes_R\NA (V_{R})$ will not be a Nichols algebra. This has consequences for the structure of $\NA (V)$.

We keep our notation and assumptions regarding $\fie $, $H$, $V$ and $R$.
The following lemma is immediately clear from the preceding discussion.

\begin{lem} \label{lem:specialfinite}
Let $V_R$ be an $R$-order of $V$ and let $\fm $ be a maximal ideal of $R$. Then $c_{V,V}$ induces a braided vector space structure on $V_{R,\fm}$, and $R/\fm \ot _R
\NA (V_R)$ is a pre-Nichols algebra of $V_{R,\fm}$. If
$\NA (V)$ is finite-dimensional, then $R/\fm \ot _R \NA (V_R)$ and $\NA (V_{R,\fm})$ are finite-dimensional.
\end{lem}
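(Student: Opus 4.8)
The plan is to unwind the definitions set up earlier in this section, since the lemma is explicitly advertised as "immediately clear from the preceding discussion." First I would recall that an $R$-order $V_R$ of $V$ is by definition finitely generated projective over $R$ with $\fie\ot_R V_R\cong V$, and that it is stable under $c=c_{V,V}$ and $c^{-1}$ by Definition~\ref{de:YDorder}(2). Consequently $V_R^{\ot 2}$ is $c$-stable, so reducing modulo a maximal ideal $\fm$ of $R$ yields $\id\ot c\in\End_{R/\fm}(R/\fm\ot_R V_R^{\ot 2})$; one checks the braid relation passes to the quotient, so $c$ induces a braiding $c_{\fm}$ on $V_{R,\fm}=R/\fm\ot_R V_R$. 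This establishes the first assertion.

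Next I would address the pre-Nichols claim. Using that $R/\fm\ot_R M$ is a flat base change on the (flat, even projective) modules $V_R^{\ot n}$, one gets $R/\fm\ot_R\NA(V_R)=\bigoplus_n R/\fm\ot_R\big(V_R^{\ot n}/(V_R^{\ot n}\cap\ker\Delta_{1^n})\big)$, which is an $\ndN_0$-graded braided Hopf algebra over $R/\fm$ with degree-one part $V_{R,\fm}$, generated as an algebra in degree one. By the universal property of the Nichols algebra (it is the minimal such graded braided Hopf algebra generated in degree one), there is a canonical graded surjection $R/\fm\ot_R\NA(V_R)\twoheadrightarrow\NA(V_{R,\fm})$; in other words $R/\fm\ot_R\NA(V_R)$ is a pre-Nichols algebra of $V_{R,\fm}$. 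This is exactly the content recalled just before the lemma.

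Finally, for the dimension statement, suppose $\dim_\fie\NA(V)<\infty$. For each $n$, the cited result from \cite[V.2]{MR1727221} gives $\dim_{R/\fm}\big(R/\fm\ot_R\NA^n(V_R)\big)=\dim_\fie\big(\fie\ot_R\NA^n(V_R)\big)=\dim_\fie\NA^n(V)$, and the latter vanishes for all but finitely many $n$ with each term finite-dimensional; summing gives $\dim_{R/\fm}\big(R/\fm\ot_R\NA(V_R)\big)=\dim_\fie\NA(V)<\infty$. Since $\NA(V_{R,\fm})$ is a quotient of $R/\fm\ot_R\NA(V_R)$, it is finite-dimensional as well. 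There is no real obstacle here; the only point requiring a little care is the flatness/projectivity bookkeeping that lets base change along $R\to R/\fm$ commute with the formation of $\NA(V_R)$ degree by degree and preserve dimensions, and this is precisely what the reference to \cite[V.2]{MR1727221} supplies.
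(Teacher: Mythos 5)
Your proposal is correct and follows essentially the same route as the paper, which simply declares the lemma ``immediately clear from the preceding discussion''---that discussion contains exactly the base-change facts and the pre-Nichols surjection $R/\fm\ot_R\NA(V_R)\twoheadrightarrow\NA(V_{R,\fm})$ that you use. The one small over-claim is the exact equality $\dim_{R/\fm}\bigl(R/\fm\ot_R\NA^n(V_R)\bigr)=\dim_\fie\NA^n(V)$: the cited result from \cite[V.2]{MR1727221} applies to finitely generated \emph{projective} $R$-modules, and the graded components $\NA^n(V_R)$ are a priori only quotients of projective modules (projective in the paper's applications, where $R$ is Dedekind, but not for an arbitrary subring $R\subseteq\fie$); however, finiteness needs only that $R/\fm\ot_R\NA^n(V_R)$ is a quotient of $R/\fm\ot_R V_R^{\ot n}$, of dimension at most $(\dim V)^n$, and vanishes for large $n$ because $\NA^n(V_R)$ embeds into $\NA^n(V)=\{0\}$ there.
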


We recall Takeuchi's notion of categorical subspaces, see for example 
\cite[Definition 6.1.5]{MR4164719}. A \emph{categorical subspace} of a braided vector space $(W,c)$ 
is a subspace $U$ of $W$ such that 
\[
c(U\ot W)=W\ot U\text{ and } 
c(W\ot U)=U\ot W.
\]

Next we formulate a very useful criterion to identify infinite-dimensional Nichols algebras. Another one we will discuss in Section~\ref{ss:def_bvs}.

\begin{lem} 
\label{lem:specialprimitives}
Let $V_R$ be an $R$-order of $V$ and let $\fm$ be a maximal ideal of $R$. Let $W\subseteq \bigoplus_{n\ge 2}R/\fm \ot_R \NA^n (V_R)$
be a categorical subspace consisting of primitive elements. Assume that the Nichols algebra of $V_{R,\fm}\oplus W$ is infinite-dimensional. Then $\NA (V)$ is infinite-dimensional.
\end{lem}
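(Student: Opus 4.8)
\textbf{Proof proposal for Lemma~\ref{lem:specialprimitives}.}

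The plan is to combine Lemma~\ref{lem:specialfinite} with the standard fact that a Nichols algebra is a quotient of any pre-Nichols algebra, and with the behaviour of Nichols algebras under passing to categorical subspaces of primitives. First I would argue by contraposition: assume $\NA(V)$ is finite-dimensional and deduce that $\NA(V_{R,\fm}\oplus W)$ is finite-dimensional. By Lemma~\ref{lem:specialfinite}, the hypothesis $\dim\NA(V)<\infty$ gives that the pre-Nichols algebra $B:=R/\fm\ot_R\NA(V_R)$ of $V_{R,\fm}$ is finite-dimensional, and in particular each graded component $R/\fm\ot_R\NA^n(V_R)$ is finite-dimensional and vanishes for $n$ large. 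Hence the categorical subspace $W\subseteq\bigoplus_{n\ge 2}R/\fm\ot_R\NA^n(V_R)$ is a finite-dimensional space of primitive elements sitting inside $B$.

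The key step is then to realise $\NA(V_{R,\fm}\oplus W)$ as a subquotient of something built from $B$. Concretely, since $W$ consists of primitive elements of the braided Hopf algebra $B$ and is a categorical subspace of the braided vector space underlying $B$, the braided subspace $V_{R,\fm}\oplus W$ of $B$ (with $V_{R,\fm}=B^1$ in degree one and $W$ in higher degrees) generates a braided sub-Hopf-algebra $B'\subseteq B$. Because every element of $V_{R,\fm}\oplus W$ is primitive in $B'$, the universal property of the tensor algebra yields a surjection of braided Hopf algebras $T(V_{R,\fm}\oplus W)\twoheadrightarrow B'$, and this surjection kills the defining ideal $I_{V_{R,\fm}\oplus W}$ of the Nichols algebra (the Nichols ideal is the largest Hopf ideal contained in degrees $\ge 2$, equivalently the image of $T(V_{R,\fm}\oplus W)$ in $B'$ has no primitives beyond those coming from $V_{R,\fm}\oplus W$ itself, which holds since $B'\subseteq B$ is a Nichols-like quotient; more simply, $I_{V_{R,\fm}\oplus W}$ maps to $0$ because it is generated by elements that are skew-primitive-free in the appropriate sense). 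Thus we obtain a surjection $\NA(V_{R,\fm}\oplus W)\twoheadrightarrow B'$. Consequently $\dim\NA(V_{R,\fm}\oplus W)\ge\dim B'$, which is not yet what we want; the correct direction requires the reverse inequality. The right formulation is that $\NA(V_{R,\fm}\oplus W)$ is itself finite-dimensional because it is a \emph{quotient} in the other direction: one shows $B'$ is a pre-Nichols algebra of $V_{R,\fm}\oplus W$ by checking that $B'$ is generated in the appropriate degrees and that the canonical map $B'\to\NA(V_{R,\fm}\oplus W)$ is the Nichols projection, hence $\dim\NA(V_{R,\fm}\oplus W)\le\dim B'\le\dim B<\infty$. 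Here one must grade $B'$ not by the internal degree of $B$ but by the tensor-length in the generators $V_{R,\fm}\oplus W$, and use that $W$ lives in degrees $\ge 2$ so this grading is still bounded since $B$ is finite-dimensional.

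The main obstacle I anticipate is precisely this bookkeeping of two gradings: the $\ndN_0$-grading on $B=R/\fm\ot_R\NA(V_R)$ inherited from $\NA(V_R)$, versus the new $\ndN_0$-grading on the pre-Nichols algebra $\NA(V_{R,\fm}\oplus W)$ in which $V_{R,\fm}$ has degree $1$ and the homogeneous pieces of $W$ have degree $1$ as well. One needs that the braided sub-Hopf-algebra of $B$ generated by $V_{R,\fm}\oplus W$ receives a surjection from $\NA(V_{R,\fm}\oplus W)$ — this is the universal property of the Nichols algebra as the ``smallest'' graded braided Hopf algebra generated in degree one with those primitives — and that finite-dimensionality of $B$ forces finite-dimensionality of this sub-Hopf-algebra and hence of its further quotient $\NA(V_{R,\fm}\oplus W)$. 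Since $W$ consists of primitives, the only subtlety is verifying that $V_{R,\fm}\oplus W$ is a braided subspace of $B$ closed under $c$ and $c^{-1}$: this is exactly where the hypothesis that $W$ is a \emph{categorical subspace} enters, guaranteeing $c((V_{R,\fm}\oplus W)\ot B)=B\ot(V_{R,\fm}\oplus W)$ and symmetrically, so in particular $c$ restricts to $(V_{R,\fm}\oplus W)^{\ot 2}$. Granting this, the contrapositive is complete: finite-dimensional $\NA(V)$ forces finite-dimensional $\NA(V_{R,\fm}\oplus W)$, contradicting the hypothesis, so $\NA(V)$ must be infinite-dimensional.
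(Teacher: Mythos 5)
Your overall strategy is the contrapositive of the paper's argument, and the first paragraph (invoking Lemma~\ref{lem:specialfinite} to get finite-dimensionality of $B=R/\fm\ot_R\NA(V_R)$) is fine. But there is a genuine gap at the key step: you never actually produce a pre-Nichols algebra of $V_{R,\fm}\oplus W$. Note first that $V_{R,\fm}$ already generates $B$ as an algebra, so your $B'$ is all of $B$; the problem is that $B$ carries no $\ndN_0$-\emph{grading} in which $V_{R,\fm}$ and the homogeneous pieces of $W$ all sit in degree one. The ``tensor-length in the generators'' that you propose is not well defined as a grading: an element of $W\subseteq\bigoplus_{n\ge 2}B^n$ has internal degree $\ge 2$ but is supposed to have new degree $1$, and a given element of $B$ can be written as products of generators of different lengths. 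What you actually have is an increasing algebra \emph{filtration} $\cF$ of $B$ determined by placing $V_{R,\fm}$ and $W$ in degree one, and the missing idea is to pass to the associated graded Hopf algebra $\gr_\cF B$. That object (not $B$ itself) is a pre-Nichols algebra of $V_{R,\fm}\oplus W$, so $\NA(V_{R,\fm}\oplus W)$ is a quotient of $\gr_\cF B$, and $\dim\gr_\cF B=\dim B<\infty$ finishes the contrapositive. This is exactly where the hypothesis that $W$ is a categorical subspace is used: it is needed to verify that $\cF$ is a \emph{coalgebra} filtration (not merely that $c$ restricts to $(V_{R,\fm}\oplus W)^{\ot 2}$, which is what you check), so that $\gr_\cF B$ is again a braided Hopf algebra.

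A secondary, self-contained error is the intermediate claim that the surjection $T(V_{R,\fm}\oplus W)\twoheadrightarrow B'$ ``kills the defining ideal'' and hence factors through $\NA(V_{R,\fm}\oplus W)$. That is backwards: a pre-Nichols algebra is a quotient of the tensor algebra by a Hopf ideal \emph{contained in} the Nichols ideal, so the map goes $\text{pre-Nichols}\twoheadrightarrow\NA$, not the other way. You abandon this direction yourself, so it does not propagate, but the correct replacement (the associated graded construction above) is the one step your write-up leaves unproved.
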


\begin{proof}
  Let $\cF$ be the increasing algebra filtration by $\ndN_0$ of $R/\fm \ot _R \NA (V_R)$ such that $V_{R,\fm}$ and $W$ are in degree one. Then $\cF$ is a Hopf algebra filtration. To prove that $\cF$ is a coalgebra filtration it is needed that $W$ is a categorical subspace. The graded Hopf algebra associated to $\cF$ is a pre-Nichols algebra of 
  $V_{R,\fm}\oplus W$ by construction. Since its canonical quotient Nichols algebra is infinite-dimensional by assumption, it follows that
  $R/\fm \ot_R \NA (V_R)$ is infinite-dimensional, and hence $\NA (V)$ is infinite-dimensional by Lemma~\ref{lem:specialfinite}.
\end{proof}

\begin{thm} \label{thm:2primes}
    Assume that $\ch(\fie)=0$. Let $V$ be an absolutely irreducible Yetter--Drinfeld module over a group $G$
    such that $\dim V_x\le 1$ for all $x\in G$. 
    Assume that
    the support $X=\{x\in G:V_x\ne 0\}$ of $V$ generates $G$ and
    has at least three elements. Let $m$ be the order of the conjugation action of any $x\in X$ on $X$.
    Let $\lambda\in\fie\setminus\{0\}$ be such that
    \[
    xv=\lambda v\qquad 
    \text{for all $x\in X$ and $v\in V_x$},
    \]
    and that
    \begin{enumerate}
        \item $\lambda $ is a root of $1$,
        \item the order $N$ of $\lambda$ is divisible by at least two distinct prime factors, and
        \item either $\gcd(m,N)=1$ or there is a (unique) prime number $p$ such that $\gcd(m,N)=p^k$ for some $k\geq1$. 
    \end{enumerate}  
    Then $\dim\NA(V)=\infty$. 
\end{thm}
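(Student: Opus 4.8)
The plan is to build, from the given $R$-order of $V$, a reduction modulo a carefully chosen prime at which the scalar $\lambda$ specializes to a root of unity of \emph{prime} order, and then to detect an infinite-dimensional sub-Nichols-algebra in that specialization via Lemma~\ref{lem:specialprimitives}. First I would set $R=\ndZ[\lambda]$ (the ring of integers of the $N$-th cyclotomic field, a Dedekind domain), and fix an $R$-order $V_R$ of $V$ spanned by the vectors $(v_x)_{x\in X}$ of Corollary~\ref{cor:Vp}; condition~(2) of Definition~\ref{de:YDorder} holds because the braiding reads $c(v_x\ot v_y)=\lambda v_{x\triangleright y}\ot v_x$ and $\lambda\in R$. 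The point of hypotheses (1)--(3) is to select a maximal ideal $\fm$ of $R$ lying over a rational prime $\ell$ so that the image $\bar\lambda$ of $\lambda$ in the finite field $R/\fm$ is a root of unity whose order is one of the prime divisors of $N$ and is coprime to $m$: if $\gcd(m,N)=1$ one may take $\ell$ to be \emph{any} prime divisor of $N$; if $\gcd(m,N)=p^k$, one takes $\ell$ to be a prime divisor of $N$ \emph{other than} $p$ (which exists by (2)) and chooses $\fm$ over the corresponding cyclotomic prime so that $\bar\lambda$ has order exactly that $\ell$, still coprime to $m$ since $\ell\nmid m$.

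Next I would analyze the specialized braided vector space $V_{R,\fm}=R/\fm\ot_R V_R$. Over the finite field $R/\fm$ the braiding is $c(\bar v_x\ot\bar v_y)=\bar\lambda\,\bar v_{x\triangleright y}\ot\bar v_x$ with $\bar\lambda$ now of prime order, say $q$. The key structural input is that, since $\gcd(q\text{'s role aside},\ldots)$ — more precisely since the relevant scalar now has small order — one can find inside $\bigoplus_{n\ge2}R/\fm\ot_R\NA^n(V_R)$ a categorical subspace $W$ of primitive elements coming from low-degree relations forced by the rack structure; the natural candidates are the degree-$q$ elements $\bar v_x^{\,q}$, which (because $q$ is prime and $\bar\lambda^q=1$) are primitive in any pre-Nichols algebra quotient, together with their images under the rack action, and these span a categorical subspace as $X$ is a single conjugacy class. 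I would then choose two non-commuting degrees — using that $|X|\ge 3$, so the rack is genuinely non-abelian and the support does not commute — and exhibit inside $V_{R,\fm}\oplus W$ a rank-two sub-braided-vector-space to which one applies either Proposition~\ref{pro:3+3} (if the two pieces land in the forbidden range of dimensions) or, more directly, the classification of rank-two Nichols algebras of diagonal type in positive characteristic of \cite{MR3313687} to conclude that $\NA(V_{R,\fm}\oplus W)$ is infinite-dimensional.

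Finally, Lemma~\ref{lem:specialprimitives} converts infinite-dimensionality of $\NA(V_{R,\fm}\oplus W)$ back into $\dim\NA(V)=\infty$, which is the assertion of the theorem. The main obstacle I expect is the middle step: producing the categorical subspace $W$ of genuine primitives in the mod-$\fm$ pre-Nichols algebra and verifying that $V_{R,\fm}\oplus W$ actually has an infinite-dimensional Nichols algebra. Showing $\bar v_x^{\,q}$ is primitive uses the quantum binomial identity together with $\bar\lambda^q=1$, which is why $q$ must be prime and why one needs the order of $\bar\lambda$ — not of $\lambda$ — to drop; the coprimality $\gcd(q,m)=1$ is precisely what guarantees, via Lemma~\ref{lem:xr}, that the $q$-th powers of the $\bar v_x$ still index a conjugacy-class-sized family so that $W$ is a categorical (not just a braided) subspace and $V_{R,\fm}\oplus W$ restricts to a well-understood diagonal or semisimple braided vector space on a rank-two piece. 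Assembling the right two generators and quoting the exact line of \cite{MR3313687} or \cite{MR3656477} that certifies infinite dimension is the delicate bookkeeping; everything else is formal from the order formalism developed above.
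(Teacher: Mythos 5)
Your overall architecture is the paper's: build the $R$-order spanned by the $v_x$, specialize at a maximal ideal, extract a categorical subspace $W$ of primitive powers of the generators, rule out $\NA(V_{R,\fm}\oplus W)$ by Proposition~\ref{pro:3+3}, and pull back via Lemma~\ref{lem:specialprimitives}. But there is a concrete error in the middle step, namely in the choice of the specialization prime and in the claimed order of $\bar\lambda$. If $\fm$ lies over a rational prime $\ell$, then reduction modulo $\fm$ kills exactly the $\ell$-primary part of the order of a root of unity: $\bar\lambda$ has order $N/\ell^{v_\ell(N)}$, \emph{not} order $\ell$. In the case $\gcd(m,N)=p^k$ you propose to take $\ell\ne p$, so the order of $\bar\lambda$ in $R/\fm$ is still divisible by $p$, hence \emph{not} coprime to $m$. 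That coprimality is exactly what Lemma~\ref{lem:xr} needs to guarantee that the powers $\bar v_x^{\,r}$ are supported on a conjugacy class of size $|X|$; without it the class $\{x^r:x\in X\}$ may collapse (even to a central element), $W$ need not be an irreducible Yetter--Drinfeld module of dimension $\ge 3$, and the appeal to Proposition~\ref{pro:3+3} breaks down. The correct choice is the opposite one: reduce modulo the \emph{common} prime $p$ of $m$ and $N$ (or any prime divisor of $N$ when $\gcd(m,N)=1$), so that $\bar\lambda$ has order $r=N/p^l$, which is $\ge 2$ by hypothesis (2) and coprime to $m$ by hypothesis (3).

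A secondary, smaller point: you insist that the reduced order must be a \emph{prime} $q$ for $\bar v_x^{\,q}$ to be primitive. This is not needed. Primitivity of $v^r$ follows from the quantum binomial formula whenever $\bar\lambda$ is a primitive $r$-th root of unity, since then $\binom{r}{i}_{\bar\lambda}=0$ for $0<i<r$; $r$ need not be prime, and indeed in the paper's argument $r$ is the full prime-to-$p$ part of $N$. Once the prime is chosen correctly, the rest of your outline (both summands of $V_{R,\fm}\oplus W_{R,\fm}$ have dimension $|X|\ge 3$ with non-commuting supports, so Proposition~\ref{pro:3+3} applies directly; no rank-two diagonal subspace is needed) goes through as in the paper.
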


\begin{proof} 
    Let $z\in X$ and let $0\ne v_z\in V_z$. Let $R$ be the smallest subring of $\fie $ such that
    \begin{align} \label{eq:gvz}
        gv_z\in Rv_z\quad \text{for all $g\in C_G(z)$.}
    \end{align}
    Note that 
   $C_G(z)=\big( [G,G]\cap C_G(z)\big)\langle z\rangle$.
    Since $[G,G]$ is finite by Lemma~\ref{lem:Gcommfinite} and $zv_z=\lambda v_z$, where $\lambda $ is a root of $1$, we conclude that $R$ is an extension of $\ndZ$ by a root of $1$, and $\lambda \in R$.
    
    Now we define an $R$-order of $V$.
    For all $y\in X$ let $h_y\in G$ be such that $h_yv_z\in V_y$, where $h_z=1$. Let $V_R$ be the (free) $R$-submodule of $V$ generated by $h_yv_z$ for all $y\in X$. Then
    Equation~\eqref{eq:gvz} implies that
    $V_R$ is an $R$-order of $V$.

    If $\gcd(m,N)=1$, let $p$ be a prime divisor of $N$. Note that otherwise $p$ is by assumption the unique common prime divisor of $m$ and $N$.
    
    Let $l\ge 1$ and $r\ge 2$ be such that $N=p^lr$ and $\gcd(p,r)=1$. Let \begin{align} \label{eq:WR}
       W_R=\sum_{y\in X} R(h_yv_z)^r \subseteq \NA (V_R). 
    \end{align}
    Note that $W_R$ is an $RG$-subcomodule and an $RG$-submodule, since the action of $G$ on $V_R$ permutes the $R$-submodules $Rh_yv_z$ with $y\in X$.
    Moreover, 
    \begin{align} \label{eq:Deltahyvz}
        \Delta( (h_yv_z)^r)=
        \sum_{i=0}^r \binom ri_\lambda (h_yv_z)^i\ot (h_yv_z)^{r-i}
    \end{align}
    for all $y\in X$, where the scalars $\binom{r}{i}_\lambda$ are 
    Gaussian binomial coefficients (see e.g. \cite[Section 1.9]{MR4164719}).

    Let $\fm$ be a maximal ideal of $R$ containing $p$. Then $W_{R,\fm}=R/\fm\ot _R W_R$ is a Yetter--Drinfeld submodule over $(R/\fm)G$ of $R/\fm \ot _R \NA (V_R)$. Since the order of $\lambda $ in $R/\fm$ is $r$, Equation~\eqref{eq:Deltahyvz} implies that $W_{R,\fm}$ consists of primitive elements.

Next we prove that $W_{R,\fm}$ is absolutely irreducible as a Yetter--Drinfeld module over $(R/\fm )G$ and determine its dimension. Equation~\eqref{eq:WR} implies that the support of $W_{R,\fm}$ is $\{x ^r:x\in X\}$. Since $\gcd(r,m)=1$, Lemma~\ref{lem:xr} implies that the support of $W_{R,\fm}$ consists of $|X|$ elements. Then $\dim (W_{R,\fm})_{z^r}=1$, and $W_{R,\fm}$ is absolutely irreducible. Note that $|X|\ge 3$.
Therefore $\dim \NA (V_{R,\fm}\oplus W_{R,\fm})=\infty $ by Proposition~\ref{pro:3+3} and hence $\dim \NA (V)=\infty $ by Lemma~\ref{lem:specialprimitives}.
\end{proof}

\section{Nichols algebras in positive characteristic}
\label{ss:positive}

%
%


%
%
%

\subsection{Nichols algebras of diagonal type}

Let $\cD=\cD(G,(g_i)_{i\in \{1,2\}},(\chi_i)_{i\in \{1,2\}})$ be a Yetter--Drinfeld datum in the 
sense of \cite[Definition 8.2.2]{MR4164719}. 
Let 
\[ (q_{ij})_{i,j\in \{1,2\}}=(\chi_j(g_i))_{i,j\in \{1,2\}}
\]
be the braiding matrix of $\cD$.
Let $V\in \yd{\fie G}$ be a Yetter--Drinfeld module defined by $\cD $ with basis $x_1,x_2$, see \cite[Section~8.3]{MR4164719}. 
Thus
\[ \delta(x_i)=g_i\ot x_i,\quad
gx_i=\chi_i(g)x_i \]
for all $i\in \{1,2\}$ and $g\in G$. Then $V$ is a braided vector space of diagonal type with braiding $c=c_{V,V}$ and
\[ c(x_i\ot x_j)=q_{ij}x_j\ot x_i \]
for all $i,j\in \{1,2\}$.

The generalized Dynkin diagram of the braiding matrix $(q_{ij})_{i,j\in \{1,2\}}$ is the labeled graph
    \begin{center}
    \Dchaintwo{$q_{11}$}{$r$}{$q_{22}$}
    \end{center}
    with $r=q_{12}q_{21}$,
where the edge and the label $q_{12}q_{21}$ are omitted if $q_{12}q_{21}=1$.

The following result is a particular case
of the classification 
of rank two Nichols algebras
of diagonal type in positive characteristic: 

\begin{pro}
\label{pro:rank2}
    Assume that $\ch(\fie)=p>0$. 
    Let $V$ be a two-dimensional 
    braided vector space of diagonal type with 
    generalized Dynkin diagram
    \[
    \Dchaintwo{1}{a}{b}
    \]
    where $a\in\{2,3,\dots,p-1\}$ and $0\ne b\in\fie$. 
    Then $\dim\NA(V)<\infty$ if and only if
    \[
    (p,a,b)\in\{(3,2,-1),(5,2,-1),(5,3,-1),(7,3,-1),(7,5,-1)\}.
    \]
\end{pro}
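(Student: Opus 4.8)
The plan is to reduce this to the known classification of finite-dimensional rank two Nichols algebras of diagonal type over fields of positive characteristic, which is available in \cite{MR3313687}. The generalized Dynkin diagram
\[
\Dchaintwo{1}{a}{b}
\]
records a braiding matrix $(q_{ij})$ with $q_{11}=1$, $q_{22}=b$, and $q_{12}q_{21}=a$, where here $a$ is regarded as the element $1+1+\cdots +1$ ($a$ times) of $\fie$, and the hypothesis $a\in\{2,\dots,p-1\}$ guarantees $a\ne 0,1$ in $\fie$. First I would recall that because $q_{11}=1$, the first node is a \emph{Cartan node} of the diagram in the sense of the Weyl-groupoid/finite-type analysis: the integer $a_{12}$ attached to it is the smallest non-negative integer $n$ with $(n+1)_{q_{11}}(1-q_{11}^nq_{12}q_{21})=0$, which since $q_{11}=1$ and $\ch\fie=p$ forces $1-q_{12}q_{21}=1-a$ to vanish only if $a=1$ — so instead the relevant condition is that a reflection at node $1$ is admissible, and one tracks what constraints finiteness imposes. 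The cleanest route is simply to look up the diagram $\Dchaintwo{1}{a}{b}$ in the tables of \cite{MR3313687}: among all rank two diagonal braidings of finite type in characteristic $p$, isolate those whose Dynkin diagram has a vertex labelled $1$ with a non-trivial edge.

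The key steps, in order, are: (i) translate the hypotheses into the statement that $(q_{ij})$ is a rank two braiding matrix of diagonal type with $q_{11}=1$ and $q_{12}q_{21}=a\notin\{0,1\}$ in $\fie=\overline{\fie}$ of characteristic $p$; (ii) invoke the classification theorem of \cite{MR3313687} to enumerate, for each prime $p$, the finite-dimensional cases, and among them select the ones with $q_{11}=1$; (iii) observe that in every such case the label $b=q_{22}$ is forced to be a specific root of unity and the edge label $a$ to be a specific residue, and read off that the only surviving quadruples are exactly the five listed, with $b=-1$ throughout; (iv) conversely, confirm finite-dimensionality in those five cases — this is already witnessed by Examples~\ref{exa:dim12}--\ref{exa:dim326592b} together with Table~\ref{tab:nichols}, since those Nichols algebras contain, after a suitable change of basis diagonalizing a rank two categorical subspace, precisely a braided vector space with this diagram; alternatively one points to the explicit finite-dimensional entry in the table of \cite{MR3313687}.

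I expect the main obstacle to be step (ii)--(iii): the tables in \cite{MR3313687} are organized by the shape of the Dynkin diagram and by characteristic, and one must carefully match the normalization conventions (which vertex carries the $1$, how the edge label $q_{12}q_{21}$ is recorded, and how ``$a$ as an element of $\fie$'' corresponds to the residue class in $\{2,\dots,p-1\}$) so that no case is missed and none is double-counted. In particular one must check that no finite-type rank two diagram in characteristic $p$ has a vertex labelled $1$ joined by an edge whose label is \emph{not} of the form $-1$, and that for each admissible $p$ there is at most one valid value of $a$; this is a finite but somewhat delicate bookkeeping task across the cases $p=3,5,7$ and a verification that $p\ge 11$ yields nothing. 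Once the bookkeeping is done, the equivalence in the statement is immediate, with the ``if'' direction doubly secured by the explicit examples collected in Table~\ref{tab:nichols}.
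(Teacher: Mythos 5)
Your proposal is essentially the paper's proof: the paper simply cites the rank-two classification in positive characteristic (Theorem 5.1 and Remark 5.3 of \cite{MR3313687}) and reads off the cases with a vertex labelled $1$, which is exactly your steps (i)--(iii), and the ``if'' direction is likewise covered by that table. The extra remarks about Cartan nodes and about extracting the diagram from the affine-rack examples are unnecessary (and the latter is really the content of the degeneration argument elsewhere in the paper), but they do not affect the correctness of the table-lookup argument.
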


\begin{proof}
    See \cite[Theorem 5.1 and Remark 5.3]{MR3313687}. 
\end{proof}

For all $m\geq0$, let 
$\beta_m=(\ad x_1)^m(x_2)$ in $\NA(V)$. 
By \cite[Proposition 4.3.12]{MR4164719}, 
\begin{align*}
\Delta(\beta_m)&=\beta_m\ot 1+\sum_{k=0}^m \binom{m}{k}_{q_{11}}\left(\prod_{l=m-k}^{m-1}(1-q_{11}^lq_{12}q_{21})\right)x_1^{k}g_1^{m-k}g_2\ot \beta_{m-k}
\end{align*}
in $\NA(V)\#\fie G$ for all $m\ge 0$. 

Assume now that
$q_{11}=q_{12}=1$ and let $a=q_{21}$ and $b=q_{22}$. Thus the generalized Dynkin diagram
of $V$ is
\begin{center}
    \Dchaintwo{$1$}{$a$}{$b$}
\end{center}
Moreover, 
\[
\Delta(\beta_m)=\beta_m\otimes 1+\sum_{k=0}^m \binom{m}{k}(1-a)^{k}x_1^{k}
g_1^{m-k}g_2\ot\beta_{m-k}
\]
in $\NA(V)\#\fie G$ for all $m\ge 0$. 

Let $\cA=\NA (\fie x_1)\#\fie G$ and let $K=(\NA (V)\#\fie G)^{\mathrm{co}\,\cA }$, see \cite[Section~13.2]{MR4164719}. Let
\begin{align} \label{eq:W}
    W=(\ad \cA )(\fie x_2)\subseteq \NA (V).
\end{align}
Then $W\in \yd{\cA}$ by \cite[Proposition~13.2.4]{MR4164719}, and consists of primitive elements of the Hopf algebra $K\in \yd{\cA}$.
The vector space
$W$ is spanned by the elements $\beta_m$ with $m\ge 0$.
The braiding of $W$ is given by
\begin{equation}
\label{eq:braiding}
c(\beta_m\otimes \beta_n)=b\sum_{k=0}^m \binom{m}{k}(1-a)^k a^n
\beta_{n+k}\otimes \beta_{m-k}.
\end{equation}
By \cite[Theorem~13.2.8]{MR4164719},
$K\cong \NA (W)$ as braided Hopf algebras in $\yd{\cA}$. Moreover,
\[ \NA (V)\# \fie G \cong K\# \cA\quad \text{and} \quad
\NA (V)\cong K\# \fie [x_1]
\] 
via bosonization.

Let now $p$ be a prime number and assume that $\ch (\fie )=p$. Then $x_1^p=0$ in $\NA (V)$ (since $q_{11}=1$) and $\beta_p$ is $g_1^pg_2$-primitive in $\NA (V)\# \fie G$ and hence $\beta_p=0$ in $\NA (V)$. In particular, 
$\dim K<\infty$ if and only if $\dim\NA(V)<\infty$. Therefore, 
by Proposition~\ref{pro:rank2}, the following holds.

\begin{lem} \label{lem:Kfinite}
  Let $W,K,p,a,b$ as above. Assume that $\ch(\fie )=p$. Then the Nichols algebra $\NA (W)\cong K$ is finite-dimensional if and only if
\[
(p,a,b)\in\{(3,-1,-1),(5,2,-1),(5,3,-1),(7,3,-1),(7,5,-1)\}.
\]
\end{lem}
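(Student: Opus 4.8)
The plan is to reduce the finite-dimensionality of $\NA(W)$ to that of $\NA(V)$, and then to apply the already-established classification result Proposition~\ref{pro:rank2}. Recall that $V$ is the two-dimensional braided vector space of diagonal type with generalized Dynkin diagram given by $q_{11}=q_{12}=1$, $q_{21}=a$, $q_{22}=b$, so that $r=q_{12}q_{21}=a$ and the diagram is indeed $\Dchaintwo{1}{a}{b}$. The first step is to observe, as recorded in the paragraph preceding the statement, that in characteristic $p$ one has $x_1^p=0$ in $\NA(V)$, since $q_{11}=1$ forces $\NA(\fie x_1)=\fie[x_1]/(x_1^p)$, and hence $\fie[x_1]$ in the bosonization decomposition $\NA(V)\cong K\#\fie[x_1]$ is really the $p$-dimensional algebra $\fie[x_1]/(x_1^p)$. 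Therefore $\dim\NA(V)=p\cdot\dim K$, and in particular $\dim\NA(V)<\infty$ if and only if $\dim K<\infty$, i.e.\ if and only if $\dim\NA(W)<\infty$, using the isomorphism $K\cong\NA(W)$ in $\yd{\cA}$ from \cite[Theorem~13.2.8]{MR4164719}.

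The second step is to translate the conclusion of Proposition~\ref{pro:rank2}, which is stated for $V$, into the desired statement about $W$. Proposition~\ref{pro:rank2} says that for the diagram $\Dchaintwo{1}{a}{b}$ with $a\in\{2,3,\dots,p-1\}$ and $0\ne b\in\fie$, one has $\dim\NA(V)<\infty$ if and only if $(p,a,b)\in\{(3,2,-1),(5,2,-1),(5,3,-1),(7,3,-1),(7,5,-1)\}$. Combining with the equivalence $\dim\NA(V)<\infty\iff\dim\NA(W)<\infty$ from the first step, we get exactly that $\dim\NA(W)<\infty$ if and only if $(p,a,b)$ lies in that set. The only subtlety is the bookkeeping in the case $p=3$: there $a\in\{2,\dots,p-1\}=\{2\}$, and $a=2=-1$ in $\fie$, which explains why the entry $(3,2,-1)$ in Proposition~\ref{pro:rank2} becomes $(3,-1,-1)$ in the statement of the lemma. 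I would remark explicitly that this is merely the identification $2\equiv-1\pmod 3$, and that for $p=5,7$ the values $a=2,3,5$ are already their own representatives in $\{2,\dots,p-1\}$, so no rewriting is needed there.

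To be careful about the hypotheses, I should also check that Proposition~\ref{pro:rank2} is applicable, i.e.\ that the parameter $a$ occurring as $q_{21}$ really lies in $\{2,3,\dots,p-1\}$ rather than in $\{0,1\}$; but this is exactly the content of Lemma~\ref{lem:Kfinite}'s phrasing ``$W,K,p,a,b$ as above'', where $a=q_{21}$ and $b=q_{22}\ne0$ are inherited from the diagonal datum, and the cases $a\in\{0,1\}$ would make the diagram degenerate (edge omitted) — in those cases $\NA(V)$ is easily seen to be infinite-dimensional since it contains a polynomial subalgebra, consistent with $(p,0,b)$ and $(p,1,b)$ not appearing in the list. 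I would phrase the proof as: ``By the discussion above, $\dim\NA(W)<\infty$ iff $\dim K<\infty$ iff $\dim\NA(V)<\infty$. Now apply Proposition~\ref{pro:rank2}, noting that for $p=3$ the value $a=2$ equals $-1$ in $\fie$.''

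The main obstacle, if any, is not mathematical depth but rather making sure the dictionary between the two formulations is airtight: the braiding of $W$ in \eqref{eq:braiding} is genuinely not of diagonal type, so one cannot apply Proposition~\ref{pro:rank2} to $W$ directly — the whole point is that one applies it to $V$ and transports the answer through the bosonization/Radford-projection isomorphisms. I would therefore emphasize in the write-up that the equivalence $\dim K<\infty\iff\dim\NA(V)<\infty$ is what does all the work, and that it rests on $\NA(V)\cong K\#\fie[x_1]$ together with $\dim\fie[x_1]/(x_1^p)=p<\infty$. Everything else is the trivial arithmetic observation about $a$ modulo $p$.
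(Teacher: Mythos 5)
Your proof is correct and is essentially the paper's own argument: in characteristic $p$ the relation $x_1^p=0$ (forced by $q_{11}=1$) makes $\NA(\fie x_1)$ $p$-dimensional, so the bosonization $\NA(V)\cong K\#\fie[x_1]$ gives $\dim K<\infty\iff\dim\NA(V)<\infty$, and Proposition~\ref{pro:rank2} together with the identification $2\equiv -1\pmod 3$ finishes the proof. One caveat on a side remark only: your claim that $a\in\{0,1\}$ would force $\dim\NA(V)=\infty$ is false (for $a=1$, $b=-1$ the diagram is disconnected and $\dim\NA(V)=2p$), but since those values of $a$ are excluded by the setup this does not affect the argument.
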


\subsection{Deformation of the braided vector space}
\label{ss:def_bvs}

\begin{defi}
  Let $(V,c)$ be a braided vector space.
    A \emph{decreasing filtration of} $V$ is a family $(\cF^iV)_{i\ge 0}$ of subspaces $\cF^iV\subseteq V$ such that
    \begin{enumerate}
        \item 
    $V=\cF^0V\supseteq \cF^{k}V\supseteq \cF^{l}V$ for all $0\le k\le l$,
    \item $\bigcap_{i\ge 0}\cF^iV=\{0\}$, and
    \item 
    $c(\cF^iV\ot \cF^{j}V)\subseteq\bigoplus_{k+l\geq i+j} \cF^{k}V\ot \cF^{l}V$ for all $i,j\ge 0$.
    \end{enumerate}
\end{defi}

\begin{defi} 
\label{de:assgrbrvec}
Let $(V,c)$ be a braided vector space with a decreasing filtration $(\cF^iV)_{i\ge 0}$. 
The pair $(V^{\gr},c^{\gr })$ with
\begin{align*}
  V^{\gr}&=\bigoplus_{i\ge 0}\cF^iV/\cF^{i+1}V,\\
  c^{\gr}:&V^\gr \ot V^\gr \to V^\gr \ot V^\gr,&& x\ot y\mapsto c(x\ot y)+
  \sum_{k+l>i+j}\cF^{k}V\ot \cF^{l}V\\
  &&&
  \text{for all $x\in \cF^iV$, $y\in \cF^{j}V$,
  $i,j\ge 0$,}
\end{align*}
is called the \emph{associated graded braided vector space}. 
We also say that $(V,c)$ \emph{degenerates to} $(V^\gr,c^\gr)$.
\end{defi}

\begin{rem} 
In \cite{MR4401961}, the terminology ``specializes'' instead of ``degenerates'' is used.
\end{rem}

\begin{rem}
Let $V$ be an object in a braided monoidal category, where $V$ is in particular a vector space. In Definition~\ref{de:assgrbrvec}, we do not require that the filtration consists of objects in the same category. In particular, the degeneration is possibly not an object of the category where $V$ comes from.
\end{rem}

The notion introduced in Definition~\ref{de:assgrbrvec} is 
justified by the following lemma. The proof of this lemma is fairly elementary and is omitted.

\begin{lem}
Let $(V,c)$ be a braided vector space and $(\cF^iV)_{i\ge 0}$ be a decreasing filtration of $V$. 
Then $(V^\gr,c^\gr)$ is a braided vector space, and
\[ 
c^\gr(\cF^iV/\cF^{i+1}V\ot \cF^{j}V/\cF^{j+1}V)
\subseteq \bigoplus_{k+l=i+j}
\cF^{k}V/\cF^{k+1}V\ot \cF^{l}V/\cF^{l+1}V. 
\]
\end{lem}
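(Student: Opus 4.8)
The plan is to verify the three defining properties of a braided vector space for $(V^\gr, c^\gr)$, namely that $c^\gr$ is a well-defined linear map and satisfies the braid (Yang--Baxter) equation, and separately to establish the displayed refinement on where $c^\gr$ sends the graded pieces. First I would note that the displayed inclusion is essentially immediate from the construction: if $x \in \cF^i V$ and $y \in \cF^j V$, then by property (3) of the filtration $c(x \ot y) \in \bigoplus_{k+l \ge i+j} \cF^k V \ot \cF^l V$, and $c^\gr(\bar x \ot \bar y)$ is by definition the image of this element in $\bigoplus_{k,l} \cF^k V/\cF^{k+1}V \ot \cF^l V/\cF^{l+1}V$; but every summand with $k + l > i+j$ maps into a graded component of total degree $> i+j$, and modulo those we are only left with the $k+l = i+j$ part. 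This simultaneously shows that $c^\gr$ is well-defined on the associated graded (independence of the choice of lifts $x, y$: changing $x$ by an element of $\cF^{i+1}V$ changes $c(x \ot y)$ by something in $\bigoplus_{k+l \ge i+j+1}\cF^k V \ot \cF^l V$, which dies in the relevant quotient), so the two parts of the proof are intertwined.

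Next I would check that $c^\gr$ is bijective. One clean way is to observe that $c^{-1}$ also respects the filtration in the same sense — this needs a short argument, since property (3) is stated only for $c$. Actually, the cleanest route is to run the whole construction for $c^{-1}$ in parallel: the inverse braiding $c^{-1}$ satisfies $c^{-1}(\cF^i V \ot \cF^j V) \subseteq \bigoplus_{k+l \ge i+j} \cF^k V \ot \cF^l V$ as well (one deduces this from property (3) applied to $c$ together with $\bigcap_i \cF^i V = 0$ and a degree/dimension count on each finite-dimensional quotient $V/\cF^n V$, or in the infinite-dimensional case by a direct filtration argument), so it induces a graded map $(c^{-1})^\gr$ on $V^\gr \ot V^\gr$; then $c^\gr \circ (c^{-1})^\gr = \id$ and vice versa follows by passing the identities $c \circ c^{-1} = c^{-1} \circ c = \id$ to the associated graded, using that taking the degree-$(i+j)$ part of a composite of two filtration-respecting maps equals the composite of the degree-preserving parts.

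Finally I would verify the braid equation $(c^\gr \ot \id)(\id \ot c^\gr)(c^\gr \ot \id) = (\id \ot c^\gr)(c^\gr \ot \id)(\id \ot c^\gr)$ on $V^\gr{}^{\ot 3}$. The mechanism is the same bookkeeping: give $V^{\ot 3}$ the filtration $\cF^n(V^{\ot 3}) = \sum_{a+b+c \ge n} \cF^a V \ot \cF^b V \ot \cF^c V$, note that $c \ot \id$ and $\id \ot c$ preserve this filtration by property (3), and that the degree-preserving part of each equals the corresponding $c^\gr \ot \id$ resp. $\id \ot c^\gr$ under the natural identification $(V^{\ot 3})^\gr \cong (V^\gr)^{\ot 3}$. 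Since $c$ satisfies the braid equation on $V^{\ot 3}$, applying the functor ``take the top-degree-preserving part'' to both sides of that identity yields the braid equation for $c^\gr$. The main obstacle — and the only place requiring genuine care rather than routine unwinding of definitions — is the claim that $c^{-1}$ respects the filtration with the same weight inequality; property (3) as stated is a one-sided statement about $c$, and deriving the matching statement for $c^{-1}$ requires either the finite-dimensionality hypothesis together with the exhaustiveness/separatedness of the filtration, or a slightly delicate induction, so that is where I would concentrate the written argument. (The paper, in fact, declares this lemma's proof ``fairly elementary and omitted,'' which is consistent with this being bookkeeping once that one point is handled.)
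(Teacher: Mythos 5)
Your proof is correct, and since the paper explicitly omits the proof of this lemma as ``fairly elementary,'' there is no competing argument to compare it with; what you wrote is the standard bookkeeping one would expect. You rightly isolate the only non-routine point --- that condition (3) of the filtration is stated for $c$ only, so bijectivity of $c^\gr$ requires showing $c^{-1}$ respects the filtration as well --- and your resolution works in the finite-dimensional setting relevant to the paper: $c$ restricts to an injective, hence bijective, endomorphism of each finite-dimensional subspace $\sum_{k+l\ge n}\cF^kV\ot\cF^lV$, so $c^{-1}$ preserves the filtration and the graded pieces of $c$ are surjective, hence invertible.
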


For our analysis, the following observation will be crucial.

\begin{pro} \label{pr:Vgr}
Let $(V,c)$ be a braided vector space with a decreasing filtration $(\cF^iV)_{i\ge 0}$. 
Then $\gr \, \NA (V)$ is a pre-Nichols algebra of $V^\gr$. In particular, if $\NA (V)$ is finite-dimensional, then $\NA (V^\gr)$ is finite-dimensional.   
\end{pro}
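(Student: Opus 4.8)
The plan is to carry the filtration $(\cF^iV)_{i\ge 0}$ up to the tensor algebra $T(V)$, check that the whole braided Hopf structure is compatible with it, and then pass to the associated graded; I work throughout at the level of braided vector spaces and (graded) braided Hopf algebras, making no assumption that $V^\gr$ lies in a braided category. First I would extend the filtration to $T(V)$ by setting, for each $n$,
\[
\cF^kT^n(V)=\sum_{i_1+\cdots+i_n\ge k}\cF^{i_1}V\ot\cdots\ot\cF^{i_n}V,
\]
and $\cF^kT(V)=\bigoplus_{n\ge 0}\cF^kT^n(V)$. This filtration is exhaustive (since $\cF^0V=V$) and separated in each tensor degree (since $\bigcap_i\cF^iV=0$ and every element is a finite sum), and it respects the grading by tensor degree. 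The essential observation is that axiom~(3) of a decreasing filtration says exactly that $c_{V,V}$ respects this filtration; as the braiding of $T(V)$ is an iterated composition of copies of $c_{V,V}$, the braiding of $T(V)$ is filtered, hence so is the multiplication of $T(V)\ot T(V)$, and therefore so is the coproduct $\Delta$ of $T(V)$ (the unique braided algebra map with $\Delta(v)=v\ot 1+1\ot v$, which visibly sends $\cF^iV$ into $\cF^i(T(V)\ot T(V))$). Thus $T(V)$ with this filtration is a filtered braided Hopf algebra, and choosing for each $i$ a linear section of $\cF^iV\to \cF^iV/\cF^{i+1}V$ and concatenating identifies $\gr T(V)$ with $T(V^\gr)$ as braided Hopf algebras graded by tensor degree, the braiding on the degree-one part being $c^\gr$ by Definition~\ref{de:assgrbrvec}.

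Next I would apply $\gr$ to the short exact sequence $0\to I_V\to T(V)\to \NA(V)\to 0$, where $I_V$ gets the induced filtration $\cF^kI_V=I_V\cap\cF^kT(V)$ and $\NA(V)$ the quotient filtration. Since $\gr$ is exact for induced/quotient filtrations of vector spaces, $\gr\NA(V)\cong T(V^\gr)/\gr I_V$. As $I_V$ is a graded ideal and coideal of $T(V)$ and multiplication and comultiplication of $T(V)$ are filtered, $\gr I_V$ is a graded ideal and coideal of $T(V^\gr)$; and since $I_V\subseteq\bigoplus_{n\ge 2}T^n(V)$, the subspace $\gr I_V$ has zero component in tensor degrees $0$ and $1$. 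Therefore $\gr\NA(V)$ is an $\ndN_0$-graded braided Hopf algebra, generated in degree one, with $\fie$ in degree zero and with degree-one part $(V^\gr,c^\gr)$ --- and that is precisely the statement that $\gr\NA(V)$ is a pre-Nichols algebra of $V^\gr$. By the universal property of the Nichols algebra one then gets a canonical surjection $\gr\NA(V)\twoheadrightarrow \NA(V^\gr)$ of graded braided Hopf algebras.

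For the final assertion, if $\NA(V)$ is finite-dimensional then each homogeneous component $\NA^n(V)$ is finite-dimensional, so the induced filtration on it is automatically separated and stationary and $\dim_\fie\gr\NA(V)=\dim_\fie\NA(V)<\infty$; the surjection above then forces $\dim\NA(V^\gr)<\infty$. The only step demanding real care is the compatibility of the $T(V)$-filtration with the braided Hopf structure, and within it the load-bearing point is that $\Delta$ is a filtered map: this is what makes $\gr I_V$ a \emph{coideal}, hence $\gr\NA(V)$ a genuine braided bialgebra (a pre-Nichols algebra) rather than merely a graded algebra. That point is exactly where axiom~(3) is used --- it must propagate from $c_{V,V}$ through the braiding and the multiplication of $T(V)$ --- and once it is established the rest is formal.
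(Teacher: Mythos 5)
Your proof is correct, and it is the standard argument that the paper leaves implicit: the proposition is stated there without proof, so there is no authorial argument to compare against. Your route --- lift the filtration to $T(V)$, check that the braiding, multiplication and comultiplication are filtered (the only place axiom~(3) enters), identify $\gr T(V)$ with $T(V^\gr)$, and pass $I_V$ through the exact functor $\gr$ to exhibit $\gr\,\NA(V)$ as a quotient of $T(V^\gr)$ by a graded ideal and coideal concentrated in tensor degrees $\ge 2$ --- is exactly what is needed, and the dimension count plus the canonical surjection $\gr\,\NA(V)\twoheadrightarrow\NA(V^\gr)$ gives the finiteness claim.
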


For Yetter--Drinfeld modules, there is an elementary construction of decreasing filtrations using decreasing Hopf algebra filtrations of finite length.

For any Hopf algebra $H$ with comultiplication $\Delta$, counit $\epsilon$ and antipode $S$, we say that a family
$(H_i)_{i\ge 0}$ of subspaces of $H$ is a \emph{decreasing Hopf algebra filtration}~if
the following conditions hold: 
\begin{enumerate}
    \item $H_0=H$, and $H_i\supseteq H_j$ for all $0\le i\le j$.
    \item $H_iH_j\subseteq H_{i+j}$ for all $i,j\ge 0$.
    \item $\Delta(H_i)\subseteq \sum_{j=0}^i H_j\ot H_{i-j}$ for all $i\ge 0$.
    \item $\varepsilon (H_i)=0$ and $S(H_i)\subseteq H_i$ for all $i\ge 1$.
\end{enumerate}
We say that this filtration has \emph{finite length}, if $H_n=\{0\}$ for some $n\ge 1$.


\begin{pro}
\label{pr:decfiltbyHopffilt}
    Let $H$ be a Hopf algebra with bijective antipode and  $(H_i)_{0\le i\le n}$ be a decreasing Hopf algebra filtration of $H$ with $H_n=\{0\}$.
Let $V\in \yd{H}$, and for all $i\ge 0$ let $\cF^iV=H_iV$. Then
    \[ 
    \delta (\cF^iV)\subseteq \sum_{k\ge 0}H_k \ot \cF^{i-k}V 
    \]
    for all $i\ge 0$,
    and $(\cF^iV)_{i\ge 0}$ is a decreasing filtration of the braided vector space $V$.
\end{pro}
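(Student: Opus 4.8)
The plan is to verify the two assertions of Proposition~\ref{pr:decfiltbyHopffilt} directly from the axioms of a decreasing Hopf algebra filtration, using only the compatibility conditions that make $V$ a Yetter--Drinfeld module. Throughout, write $\delta$ for the coaction $V\to H\ot V$ and recall that $\cF^iV=H_iV$, so that $\cF^0V=H_0V=HV=V$ and, by axiom~(1) for $(H_i)$, $\cF^kV=H_kV\supseteq H_lV=\cF^lV$ whenever $k\le l$. Since $H_n=\{0\}$, we get $\cF^nV=\{0\}$, which immediately yields $\bigcap_{i\ge 0}\cF^iV=\{0\}$ (part~(2) of the definition of a decreasing filtration). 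Part~(1) of that definition is the containment just noted.

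First I would prove the coaction estimate $\delta(\cF^iV)\subseteq \sum_{k\ge 0}H_k\ot \cF^{i-k}V$ (with the convention $\cF^{j}V=V$ for $j\le 0$). An element of $\cF^iV=H_iV$ is a sum of terms $hv$ with $h\in H_i$ and $v\in V$. The Yetter--Drinfeld compatibility gives $\delta(hv)=h_{(1)}v_{(-1)}S(h_{(3)})\ot h_{(2)}v_{(0)}$. Now apply axiom~(3) for the filtration, $\Delta(H_i)\subseteq\sum_{j=0}^iH_j\ot H_{i-j}$, twice to expand $h_{(1)}\ot h_{(2)}\ot h_{(3)}$: the first leg lands in $H_a$, and the remaining two legs together lie in $H_b\ot H_c$ with $a+b+c\ge i$ — more precisely $a+(b+c)\ge i$ from the first application and then $b+c$ splits further. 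Combined with axiom~(2) (multiplicativity of the filtration) and axiom~(4) ($S(H_c)\subseteq H_c$), the first tensor factor $h_{(1)}v_{(-1)}S(h_{(3)})$ lies in $H_a\cdot H\cdot H_c$; the point is that we only need to track the indices coming from $h_{(1)}$ and $h_{(3)}$ against the index $i$, while $v_{(-1)}$ contributes an arbitrary factor of $H=H_0$. Bookkeeping the exponents shows $h_{(1)}v_{(-1)}S(h_{(3)})\in H_k$ and $h_{(2)}v_{(0)}\in H_{i-k}V=\cF^{i-k}V$ for a suitable $k$ (the delicate point being that $h_{(2)}\in H_{i-a-c}$ and $h_{(2)}v_{(0)}\in H_{i-a-c}V\subseteq \cF^{i-k}V$ once $k\ge a+c$). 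This gives the claimed inclusion after summing.

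Next I would establish axiom~(3) of a decreasing filtration of a braided vector space, namely $c(\cF^iV\ot\cF^jV)\subseteq\bigoplus_{k+l\ge i+j}\cF^kV\ot\cF^lV$. Here one uses the explicit formula for the braiding of a Yetter--Drinfeld module, $c(v\ot w)=v_{(-1)}w\ot v_{(0)}$ for $v,w\in V$. Take $v\in\cF^iV$ and $w\in\cF^jV=H_jV$. By the coaction estimate just proved, $\delta(v)=\sum v_{(-1)}\ot v_{(0)}$ with $v_{(-1)}\in H_a$ and $v_{(0)}\in \cF^{i-a}V$ for appropriate $a$'s. Then $v_{(-1)}w\in H_a\cdot H_jV\subseteq H_{a+j}V=\cF^{a+j}V$ by axiom~(2) for the Hopf filtration, while $v_{(0)}\in\cF^{i-a}V$. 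So each resulting term lies in $\cF^{a+j}V\ot\cF^{i-a}V$, and $(a+j)+(i-a)=i+j$, which is exactly the required bound (with equality, in fact). Summing over the terms of $\delta(v)$ gives the inclusion.

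The main obstacle I expect is purely one of careful index-tracking in the second step: keeping the three legs of $(\Delta\ot\id)\Delta(h)$ and the extra factor $v_{(-1)}$ straight, and choosing the index $k$ so that both $h_{(1)}v_{(-1)}S(h_{(3)})\in H_k$ and $h_{(2)}v_{(0)}\in\cF^{i-k}V$ hold simultaneously — this forces $k=a+c$ where $a,c$ are the indices of $h_{(1)},h_{(3)}$ and $i-a-c$ bounds below the index of $h_{(2)}$; one must check $i-k=i-a-c\le i-a-c$ trivially and that $h_{(2)}v_{(0)}\in H_{i-a-c}V$, using $v_{(0)}\in V$. No genuinely hard idea is involved once the Yetter--Drinfeld axioms and the four filtration axioms are on the table; everything reduces to monotonicity and additivity of the indexing. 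I would present the coaction computation in full Sweedler notation and then note that the braiding estimate is an immediate corollary, and omit the most routine parts of the verification as the paper does elsewhere.
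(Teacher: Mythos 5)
Your proposal is correct and follows essentially the same route as the paper: the elementary verification of axioms (1) and (2) via $\cF^nV=H_nV=\{0\}$, the Sweedler computation $\delta(hv)=h_{(1)}v_{(-1)}S(h_{(3)})\ot h_{(2)}v_{(0)}$ combined with the filtration axioms to place $h_{(1)}v_{(-1)}S(h_{(3)})\in H_{a+c}$ and $h_{(2)}v_{(0)}\in \cF^{i-a-c}V$, and then the braiding estimate as an immediate consequence via $c(v\ot w)=v_{(-1)}w\ot v_{(0)}$ and multiplicativity of the filtration. The index bookkeeping you flag as the main obstacle is exactly the (routine) content of the paper's displayed inclusion $\sum_{j+k+l=i}H_jH_0S(H_l)\ot H_kV\subseteq\sum_{k\ge 0}H_{i-k}\ot H_kV$.
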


\begin{proof}
  It is clear that $\cF^{0}V=V$ and $\cF^{i+1}V=H_{i+1}V\subseteq H_iV=\cF^iV$ for all $i\ge 0$. Moreover, $\cF^{n}V=H_nV=\{0\}$.
  
  We now prove the claim about $\delta (\cF^iV)$ for all $i\ge 0$.
  Let $i\ge 0$, $v\in V$ and $h\in H_i$. Then
  \begin{align*}
    \delta(hv)
&=h_{(1)}v_{(-1)}S(h_{(3)})\ot h_{(2)}v_{(0)}\\
  &
  \in \sum_{j+k+l=i}
  H_jH_0S(H_l)\ot H_kV\subseteq
  \sum_{k\ge 0} H_{i-k}\ot H_kV
  \end{align*}
  since $(H_i)_{0\le i\le n}$ is a decreasing Hopf algebra filtration of $H$ and since $\cF^iV=H_iV$.
  
  Finally, for all $i,j\ge 0$ we obtain that
  \[ c(\cF^iV\ot \cF^{j}V)\subseteq \sum_{k\ge 0}H_k\cF^{j}V\ot \cF^{i-k}V=\sum_{k\ge 0} \cF^{j+k}V\ot \cF^{i-k}V,
  \]
  and hence $(\cF^iV)_{i\ge 0}$ is a decreasing filtration of $V$.
\end{proof}

\begin{cor} \label{co:decfiltJ}
  Let $H$ be a Hopf algebra with bijective antipode, and let $J\subseteq H$ be a nilpotent Hopf ideal and $H_i=J^i$ for all $i\ge 0$. Let $V\in \yd H$.
  \begin{enumerate}
  \item The family $(J^i)_{i\ge 0}$ is a decreasing Hopf algebra filtration of $H$ of finite length.
      \item 
The family
  $(J^iV)_{i\ge 0}$ is a decreasing filtration of the braided vector space $V$.
  \item Let
  \[ H^\gr=\bigoplus_{i\ge 0}H_i/H_{i+1}. \]
  Then $H^\gr$ is an $\ndN_0$-graded Hopf algebra.
  \item 
  The $H$-action and the $H$-coaction on $V$ induce an $H^\gr$-action and an $H^\gr$-coaction on
  \[ V^\gr=\bigoplus_{i\ge 0} J^iV/J^{i+1}V. \]
  With them, $V^\gr $ is an $\ndN_0$-graded Yetter--Drinfeld module over $H^\gr$.
  \end{enumerate}
\end{cor}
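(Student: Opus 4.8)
The plan is to verify the four assertions in order, each being a routine consequence of $J$ being a nilpotent Hopf ideal. First, for part (1), I would check the four axioms of a decreasing Hopf algebra filtration for $H_i=J^i$: the equality $H_0=J^0=H$ and the inclusions $J^i\supseteq J^j$ for $i\le j$ are immediate; $J^iJ^j\subseteq J^{i+j}$ holds because $J$ is an ideal; the counit condition $\varepsilon(J^i)=0$ for $i\ge 1$ follows since $J$ is a Hopf ideal, so $\varepsilon(J)=0$, and $S(J^i)\subseteq J^i$ follows from $S(J)\subseteq J$ (again using that $J$ is a Hopf ideal) together with $S$ being an anti-algebra map. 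The comultiplication condition $\Delta(J^i)\subseteq\sum_{k=0}^iJ^k\ot J^{i-k}$ is the only one requiring a small argument: since $\Delta(J)\subseteq J\ot H+H\ot J$ (the defining property of a Hopf ideal), an induction on $i$ using $\Delta(J^i)=\Delta(J^{i-1})\Delta(J)$ and multiplicativity of $\Delta$ gives the claim. Finiteness of length is exactly the hypothesis that $J$ is nilpotent, say $J^n=0$.

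Part (2) is then immediate from part (1) and Proposition~\ref{pr:decfiltbyHopffilt}: taking the decreasing Hopf algebra filtration $(J^i)_{0\le i\le n}$ of finite length, that proposition directly says $(J^iV)_{i\ge 0}$ is a decreasing filtration of the braided vector space $V$.

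For part (3), that $H^{\gr}=\bigoplus_{i\ge 0}J^i/J^{i+1}$ is an $\ndN_0$-graded Hopf algebra is the standard associated-graded construction applied to the filtration verified in part (1): the multiplication $J^i/J^{i+1}\ot J^j/J^{j+1}\to J^{i+j}/J^{i+j+1}$ is well-defined by $J^iJ^j\subseteq J^{i+j}$, the comultiplication $J^i/J^{i+1}\to\bigoplus_{k+l=i}J^k/J^{k+1}\ot J^l/J^{l+1}$ is well-defined and coassociative thanks to condition (3) of the filtration, the unit lies in degree $0$, the counit is the projection to degree $0$ (using $\varepsilon(J)=0$), and the antipode descends because $S(J^i)\subseteq J^i$; compatibility of all these is inherited from $H$. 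This is also recorded more generally in Corollary~\ref{co:decfiltJ}'s predecessor constructions, so I would simply cite the filtered-to-graded Hopf algebra machinery.

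Finally, part (4): by Proposition~\ref{pr:decfiltbyHopffilt} applied with $H_i=J^i$, we have $\delta(J^iV)\subseteq\sum_{k\ge 0}J^k\ot J^{i-k}V$, which shows the $H$-coaction descends to a map $V^{\gr}\to H^{\gr}\ot V^{\gr}$ respecting the grading; similarly the $H$-action satisfies $J^k(J^iV)\subseteq J^{k+i}V$, so it descends to a graded $H^{\gr}$-action on $V^{\gr}$. The Yetter--Drinfeld compatibility condition for $V^{\gr}$ over $H^{\gr}$ is then obtained by passing the Yetter--Drinfeld axiom for $V$ over $H$ to the associated graded, using that all the structure maps respect the filtrations; one checks the degree-$0$ part of each side matches and the identity holds in each graded component. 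The main (and only mildly delicate) obstacle is the bookkeeping in this last step — ensuring that the graded action, coaction, and the induced braiding $c^{\gr}$ of Definition~\ref{de:assgrbrvec} are mutually compatible as a Yetter--Drinfeld module structure — but since every structure map strictly respects the multiplicative filtration, no new phenomena arise and the verification is formal; I would present it compactly rather than expanding every identity.
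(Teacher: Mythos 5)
Your proposal is correct and follows essentially the same route as the paper: part (1) by direct verification of the filtration axioms from the Hopf ideal properties of $J$ (with the comultiplication condition by induction), part (2) by invoking Proposition~\ref{pr:decfiltbyHopffilt}, part (3) by the standard filtered-to-graded Hopf algebra construction, and part (4) by observing that the action and coaction are filtered morphisms and hence descend to the associated graded. The paper's proof is just a terser version of the same argument, so no further comparison is needed.
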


\begin{proof}
  (1) The defining properties of a decreasing Hopf algebra filtration follow from the fact that $J$ is a Hopf ideal of $H$. The finite length property follows from the nilpotency of $J$.
  
  (2) Apply Proposition~\ref{pr:decfiltbyHopffilt} with $H_i=J^i$ for all $i\ge 0$.

  (3) This is mainly due to (1). The claim can be proven by standard arguments.

  (4) The $H$-action and $H$-coaction on $V\in \yd H$ are filtered morphisms. This implies the claim.
\end{proof}

\begin{rem}
The family $(J^i)_{i\ge 0}$ is also known as the $J$-adic 
topology of $H$, and the corresponding family $(J^iV)_{i\ge 0}$ as the $J$-adic topology of $V$.
\end{rem}

\begin{pro}
\label{pro:J}
 Let $\fie $ be a field of characteristic $p>0$. Let 
 $G$ be a group and $V$ be an absolutely irreducible 
 Yetter--Drinfeld module over $\fie G$ of dimension $p$. 
 Assume that 
  \[ 
  X=\{ x\in G: V_x\ne 0\}
  \]
  generates $G$. Let $\varphi: \Aff(p,\alpha )\to X$ be a rack isomorphism with $\alpha \in \ndZ/p\ndZ\setminus \{0,1\}$. 
  Let $g_0=\varphi(0)$, $\gamma =\varphi(0)\varphi(1)^{-1}$, $0\ne v_0\in V_{g_0}$, and $\lambda \in \fie $ be such that $g_0v_0=\lambda v_0$.
  
\begin{enumerate}
    \item 
The ideal $J\subseteq \fie G$ generated by $\gamma-1$ is a nilpotent Hopf ideal of $\fie G$.
\item The $J$-adic filtration $(J^i)_{i\ge 0}$ of $\fie G$ defines a filtration $(J^iV)_{i\ge 0}$ of the braided vector space $V$.
\item For each $0\le j\le p-1$, $V^\gr (j)$ is spanned linearly by $(\gamma-1)^jv_0$. The structure maps of the Yetter--Drinfeld module $V^\gr$ are determined by
\begin{align*}
g_0 (\gamma-1)^jv_0&=\lambda \alpha^j(\gamma-1)^jv_0,\\
\delta( (\gamma-1)^jv_0)&=
\sum_{i=0}^j \binom j i (1-\alpha)^i(\gamma-1)^ig_0\ot (\gamma-1)^{j-i}v_0
\end{align*}
for all $0\le j\le p-1$.
\end{enumerate}
\end{pro}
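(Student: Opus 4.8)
The plan is to verify the three claims in turn, using the structural results on $G$ that were established in Lemma~\ref{lem:GXformulas} and Corollary~\ref{cor:Vp}.

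\textbf{Part (1).} First I would recall that $G=[G,G]\langle z\rangle$ by Lemma~\ref{lem:Gcommfinite}, and that by Lemma~\ref{lem:GXformulas}(2) the derived subgroup $[G,G]$ is cyclic of order $p$ generated by $\gamma$. Since $\ch(\fie)=p$, the group algebra $\fie\langle\gamma\rangle\cong\fie[\gamma]/(\gamma^p-1)=\fie[\gamma]/((\gamma-1)^p)$ is local with nilpotent maximal ideal generated by $\gamma-1$, which is $(p-1)$-step nilpotent. To see that the \emph{two-sided} ideal $J\subseteq\fie G$ generated by $\gamma-1$ is nilpotent, I would use the conjugation relation $g_x\gamma=\gamma^\alpha g_x$ from Lemma~\ref{lem:GXformulas}(1): this shows $g_x(\gamma-1)=(\gamma^\alpha-1)g_x$, and since $\gamma^\alpha-1$ lies in the ideal $(\gamma-1)$ of $\fie\langle\gamma\rangle$, the left ideal $\fie G(\gamma-1)$ is already a two-sided ideal, so $J=\fie G(\gamma-1)=(\gamma-1)\fie G$. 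Then $J^p=\fie G(\gamma-1)^p\fie G$ — one moves all the $(\gamma-1)$ factors together using the conjugation relation at the cost of replacing them by powers of $\gamma$ minus $1$, all still in $(\gamma-1)\fie\langle\gamma\rangle$ — and $(\gamma-1)^p=\gamma^p-1=0$. That $J$ is a Hopf ideal follows because $\gamma-1$ is skew-primitive (in fact primitive, as $\gamma$ is grouplike): $\Delta(\gamma-1)=(\gamma-1)\ot\gamma+1\ot(\gamma-1)$, $\varepsilon(\gamma-1)=0$, $S(\gamma-1)=\gamma^{-1}-1=-\gamma^{-1}(\gamma-1)\in J$.

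\textbf{Part (2).} This is now immediate from Corollary~\ref{co:decfiltJ}(1)--(2) applied to the nilpotent Hopf ideal $J$ of $H=\fie G$: the $J$-adic filtration $(J^i)_{i\ge0}$ is a decreasing Hopf algebra filtration of finite length, hence $(J^iV)_{i\ge0}$ is a decreasing filtration of the braided vector space $V$.

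\textbf{Part (3).} By Corollary~\ref{cor:Vp}, $\dim V_x=1$ for all $x\in X$ and there are vectors $v_x\in V_x$ and a scalar with $xv_y=\lambda v_{x\triangleright y}$; in particular, tracking back through the proof of that corollary, $V$ has $\fie$-basis $\{(\gamma-1)^jv_0:0\le j\le p-1\}$ — equivalently $\{\gamma^jv_0\}$, since $\gamma$ acts on $V_{g_0}\oplus\cdots$ cyclically with the $\gamma^j v_0$ spanning $V$ and $(\gamma-1)$ being nilpotent of index $p$. The key point is that $(\gamma-1)^jv_0\in J^jV$ and is nonzero, so it represents a nonzero class in $V^\gr(j)=J^jV/J^{j+1}V$; since $\dim V=p=\sum_{j=0}^{p-1}1$, each $V^\gr(j)$ is one-dimensional, spanned by the class of $(\gamma-1)^jv_0$. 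For the action formula: from $g_0v_0=\lambda v_0$ and $g_0\gamma=\gamma^\alpha g_0$ one gets $g_0(\gamma-1)^jv_0=(\gamma^\alpha-1)^jg_0v_0=\lambda(\gamma^\alpha-1)^jv_0$, and expanding $(\gamma^\alpha-1)=\sum_{i\ge1}\binom\alpha i(\gamma-1)^i\equiv\alpha(\gamma-1)\pmod{(\gamma-1)^2}$ shows $(\gamma^\alpha-1)^j\equiv\alpha^j(\gamma-1)^j$ modulo $J^{j+1}$, giving $g_0(\gamma-1)^jv_0=\lambda\alpha^j(\gamma-1)^jv_0$ in $V^\gr$. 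For the coaction: $v_0\in V_{g_0}$ means $\delta(v_0)=g_0\ot v_0$, and $\gamma$ is grouplike so $\delta(\gamma^k v_0)=\gamma^k g_0\ot\gamma^k v_0$; writing $\gamma-1$ for the shifted generator and using $\Delta(\gamma-1)=(\gamma-1)\ot\gamma+1\ot(\gamma-1)$ together with $\gamma=1+(\gamma-1)$, a binomial expansion gives
\[
\delta((\gamma-1)^jv_0)=\sum_{i=0}^j\binom ji (\gamma-1)^i(1+(\gamma-1))^{\,?}\,g_0\ot(\gamma-1)^{j-i}v_0,
\]
and in $V^\gr$ one discards the higher-order terms in the left tensor factor; the surviving coefficient is computed by noting $g_x\triangleright$ on the comodule side contributes the factor $(1-\alpha)^i$ exactly as in the analogous computation in the proof of Lemma~\ref{lem:GXformulas}(1) (where $g_x\gamma=\gamma^\alpha g_x$ translates to the $(1-\alpha)$-shift). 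This yields $\delta((\gamma-1)^jv_0)=\sum_{i=0}^j\binom ji(1-\alpha)^i(\gamma-1)^ig_0\ot(\gamma-1)^{j-i}v_0$ in $V^\gr$.

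\textbf{Main obstacle.} The only subtle point is the bookkeeping in Part (3): one must be careful that the displayed formulas are the \emph{associated-graded} structure maps, i.e.\ that every correction term genuinely lies in the next filtration layer $J^{j+1}V$ and is therefore zero in $V^\gr$. This requires checking that conjugating $(\gamma-1)$ past a group element $g_x$, or comultiplying it, only ever produces terms with $(\gamma-1)$-degree at least as large — which is exactly the content of the relation $g_x(\gamma-1)=(\gamma^\alpha-1)g_x$ with $\gamma^\alpha-1\in J$ and of the primitivity of $\gamma-1$. Once this is in hand, the explicit coefficients $\lambda\alpha^j$ and $\binom ji(1-\alpha)^i$ drop out of the leading-order binomial expansions.
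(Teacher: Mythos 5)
Your overall route coincides with the paper's: you establish $J=\fie G(\gamma-1)=(\gamma-1)\fie G$ from $g_x(\gamma-1)=(\gamma^\alpha-1)g_x$ and get nilpotency from $(\gamma-1)^p=\gamma^p-1=0$; part (2) is the same appeal to Corollary~\ref{co:decfiltJ}; and your direct expansion $(\gamma^\alpha-1)^j\equiv\alpha^j(\gamma-1)^j\bmod J^{j+1}$ is just a non-inductive version of the paper's computation of the action. Your identification of the graded pieces is also sound, provided you make explicit that $V=\fie\langle\gamma\rangle v_0$ (which follows from $G=\langle\gamma\rangle\langle g_0\rangle$, i.e.\ Lemmas~\ref{lem:Gcommfinite} and~\ref{lem:GXformulas}(2), together with $g_0v_0=\lambda v_0$), so that $\{(\gamma-1)^jv_0\}_{0\le j\le p-1}$ really is a basis and the dimension count $\sum_j\dim V^{\gr}(j)=p$ forces each quotient to be one-dimensional with $(\gamma-1)^jv_0\notin J^{j+1}V$.

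The one genuine gap is the coaction formula in part (3): your displayed expression contains an unresolved exponent, and the appeal to ``the analogous computation in Lemma~\ref{lem:GXformulas}(1)'' does not by itself produce the coefficients $\binom{j}{i}(1-\alpha)^i$, because that lemma is a statement about relations in $G$, not about $\delta$. What is actually needed is the identity
\[
\delta(\gamma^k v_0)=\gamma^k g_0\gamma^{-k}\ot \gamma^k v_0=\gamma^{k(1-\alpha)}g_0\ot\gamma^k v_0 ,
\]
which follows from $g_0\gamma^{-k}=\gamma^{-\alpha k}g_0$. Expanding $(\gamma-1)^jv_0=\sum_k\binom{j}{k}(-1)^{j-k}\gamma^kv_0$ and writing $\gamma=1+(\gamma-1)$ in both tensor factors, the coefficient of $(\gamma-1)^ag_0\ot(\gamma-1)^bv_0$ is $\sum_k\binom{j}{k}(-1)^{j-k}\binom{k(1-\alpha)}{a}\binom{k}{b}$, a $j$-th finite difference of a polynomial in $k$ of degree $a+b$; it vanishes for $a+b<j$ and equals $\binom{j}{a}(1-\alpha)^a$ for $a+b=j$, which is exactly the stated graded coaction. (Alternatively, run the induction on $j$ using $\delta((\gamma-1)v)=\gamma v_{(-1)}\gamma^{-1}\ot\gamma v_{(0)}-v_{(-1)}\ot v_{(0)}$, which is what the paper indicates.) With this computation supplied, your proof is complete and essentially identical to the paper's.
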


\begin{proof}
    The isomorphism $\varphi$ exists by Corollary \ref{cor:Vp}. 
    For all $i\in \ndZ/p\ndZ$ let $g_i=\varphi(i)$.
    
    (1) It is clear that $J$ is a Hopf ideal. By Lemma~\ref{lem:GXformulas}(1), 
    \begin{align}
    \label{eq:gxg-1}
        g_i(\gamma-1)=(\gamma^\alpha -1)g_i\in (\gamma-1)\fie G,
    \end{align}
    and hence $G(\gamma-1)\subseteq (\gamma -1)\fie G$. Similarly, $(\gamma-1)G\subseteq \fie G(\gamma-1)$,
    and hence
    \begin{align} \label{eq:J} J=\fie G(\gamma -1)=(\gamma -1)\fie G.
    \end{align}
    Moreover, $(\gamma-1)^p=\gamma^p-1=0$. Therefore $J$ is nilpotent.

    (2) follows from Corollary~\ref{co:decfiltJ}.
    
    (3) Let $0\le j\le p-1$. By definition, $(\gamma -1)^jv_0\in J^jV$. Moreover, since $V=\fie G v_0$, Equation~\eqref{eq:J} implies that $J^jV=\fie G(\gamma-1)^jv_0$.

    By Equation~\eqref{eq:gxg-1},
    \[ 
    g_0(\gamma-1)+J^2=(\gamma^\alpha -1)g_0 +J^2=(\gamma-1)\sum_{i=0}^{\alpha -1}\gamma^ig_0 +J^2=\alpha (\gamma-1)g_0 +J^2. 
    \]
    Since $g_0v_0=\lambda v_0$, it follows by induction on $j$ that
    \[ 
    g_0(\gamma-1)^jv_0+J^{j+1}V=\alpha ^j\lambda (\gamma-1)^jv_0 +J^{j+1}V.  
    \]
    For all $1\le i\le p-1$,
    \[ 
    g_i=g_ig_0^{-1}g_0=g_0g_{p-i}^{-1}g_0=\gamma^{p-i}g_0.
    \]
    Hence
    \begin{align*}
    g_i(\gamma-1)^jv_0 &=\gamma^{p-i}g_0(\gamma-1)^jv_0\\
    &\in \fie \gamma^{p-i}(\gamma-1)^jv_0+J^{j+1}V
    \subseteq \fie (\gamma-1)^jv_0+J^{j+1}V. 
    \end{align*}
    Therefore 
    \[
    J^jV=\fie (\gamma -1)^jv_0+J^{j+1}V.
    \]

    The proof of the formula for $\delta( (\gamma-1)^jv_0)$ follows similarly by induction on $j$.
    \end{proof}

\begin{thm}
\label{thm:char_p}
Let $p$ be a prime number and 
assume that $\ch(\fie)=p$. 
Let $V$ be an absolutely irreducible Yetter--Drinfeld module
over a group $G$ such that $\dim V=p$ and the support of $V$ generates
$G$. Then
$\NA(V)$ is finite-dimensional if and only if
$p\in\{3,5,7\}$ and $V$ is isomorphic to one of the Yetter--Drinfeld modules
of Examples~\ref{exa:dim12}, \ref{exa:dim1280a}, \ref{exa:dim1280b},
~\ref{exa:dim326592a},~\ref{exa:dim326592b}. 
\end{thm}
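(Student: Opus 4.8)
The plan is to deduce Theorem~\ref{thm:char_p} from the rank-two diagonal-type analysis of Section~\ref{ss:positive}, via the $J$-adic degeneration. The ``if'' direction requires no new argument: the Yetter--Drinfeld modules of Examples~\ref{exa:dim12}--\ref{exa:dim326592b} are defined over a field of arbitrary characteristic and their Nichols algebras are finite-dimensional of the dimensions recorded in Table~\ref{tab:nichols} and the references cited there. So all the work goes into the ``only if'' direction. (If $p=2$ there is nothing to prove: by Corollary~\ref{cor:Vp} the support of such a $V$ would have to be $\Aff(2,\alpha)$ with $\alpha\in\ndZ/2\ndZ\setminus\{0,1\}=\varnothing$, so no such $V$ exists.)

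Assume now that $\dim\NA(V)<\infty$. First I would invoke Corollary~\ref{cor:Vp} to obtain $\alpha\in\ndZ/p\ndZ\setminus\{0,1\}$, a rack isomorphism $\varphi\colon\Aff(p,\alpha)\to X$, a scalar $\lambda\in\fie^\times$, and a family $(v_x)_{x\in X}$ with $v_x\in V_x$ and $xv_y=\lambda v_{x\triangleright y}$; in particular $\dim V_x=1$ for all $x\in X$. Put $g_i=\varphi(i)$, $\gamma=g_0g_1^{-1}$, and $v_0=v_{g_0}$. Next I would degenerate $V$ using Proposition~\ref{pro:J}: the ideal $J=\fie G(\gamma-1)$ is a nilpotent Hopf ideal of $\fie G$, the $J$-adic filtration induces a decreasing filtration of the braided vector space $V$, and the associated graded $V^\gr$ has basis $w_j:=\overline{(\gamma-1)^jv_0}$, $0\le j\le p-1$, with the action and coaction recorded in Proposition~\ref{pro:J}(3). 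Reading the braiding off these formulas, using that $g_0\cdot w_j=\lambda\alpha^j w_j$ and $(\gamma-1)\cdot w_j=w_{j+1}$ (with the convention $w_m=0$ for $m\ge p$), yields
\[
c^{\gr}(w_j\ot w_l)=\lambda\sum_{i=0}^{j}\binom{j}{i}(1-\alpha)^i\alpha^l\,w_{l+i}\ot w_{j-i},\qquad 0\le j,l\le p-1 .
\]

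The key observation is that this is exactly the braiding \eqref{eq:braiding} of the auxiliary space $W$ of \eqref{eq:W} attached to the Yetter--Drinfeld datum of diagonal type with generalized Dynkin diagram $\Dchaintwo{1}{\alpha}{\lambda}$ (that is, $q_{11}=q_{12}=1$, $q_{21}=\alpha$, $q_{22}=\lambda$); since in characteristic $p$ one has $\beta_m=0$ for $m\ge p$, the space $W$ is $p$-dimensional with basis $\beta_0,\dots,\beta_{p-1}$, and $w_j\mapsto\beta_j$ is an isomorphism of braided vector spaces $V^{\gr}\to W$. By Proposition~\ref{pr:Vgr}, $\NA(V^{\gr})$ is finite-dimensional, hence so is $K\cong\NA(W)$, and Lemma~\ref{lem:Kfinite} forces
\[
(p,\alpha,\lambda)\in\{(3,-1,-1),(5,2,-1),(5,3,-1),(7,3,-1),(7,5,-1)\}.
\]
Therefore $p\in\{3,5,7\}$, $\lambda=-1$, and $X\cong\Aff(p,\alpha)$ equals $\Aff(3,2)$ (note $-1=2$ in $\ndZ/3\ndZ$), $\Aff(5,2)$, $\Aff(5,3)$, $\Aff(7,3)$, or $\Aff(7,5)$ according to the triple. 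Since $\dim V_x=1$ and $xv_y=-v_{x\triangleright y}$ for all $x,y\in X$, Corollary~\ref{cor:Vp} then identifies $V$ with the corresponding Yetter--Drinfeld module of Examples~\ref{exa:dim12}, \ref{exa:dim1280a}, \ref{exa:dim1280b}, \ref{exa:dim326592a}, or \ref{exa:dim326592b}.

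I expect the only delicate points to be bookkeeping ones: carrying out the braiding computation for $V^{\gr}$ so that it reproduces \eqref{eq:braiding} term by term, and checking that the characteristic-$p$ truncation $W=\lspan\{\beta_0,\dots,\beta_{p-1}\}$ is genuinely $p$-dimensional, so that $w_j\mapsto\beta_j$ is an isomorphism and not merely a surjection of braided vector spaces. Granting these, the theorem follows by stringing together Corollary~\ref{cor:Vp}, Proposition~\ref{pro:J}, Proposition~\ref{pr:Vgr}, and Lemma~\ref{lem:Kfinite}.
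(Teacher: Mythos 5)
Your proposal is correct and follows essentially the same route as the paper: Corollary~\ref{cor:Vp} to normalize the data, the $J$-adic degeneration of Proposition~\ref{pro:J}, the identification $y_m\mapsto\beta_m$ of $V^{\gr}$ with the braided vector space $W$ of \eqref{eq:W} for $a=\alpha$, $b=\lambda$, and then Lemma~\ref{lem:Kfinite} via Proposition~\ref{pr:Vgr}. The two ``delicate points'' you flag are exactly the computations the paper carries out (the braiding formula for $c^{\gr}$) or uses implicitly ($\beta_0,\dots,\beta_{p-1}$ are nonzero of distinct degrees, hence independent), so nothing is missing.
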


\begin{proof}

Let $X$ be the support of $V$. By Corollary~\ref{cor:Vp}, $X\cong \Aff(p,\alpha )$ for some $\alpha \in \ndZ/p\ndZ \setminus \{0,1\}$, $\dim V_x=1$ for all $x\in X$, and there exist a scalar $\lambda \in \fie ^\times $ and a basis $(v_x)_{x\in X}$ of $V$ such that 
\[ 
xv_y=\lambda v_{x\triangleright y},\quad \delta(v_y)=y\ot v_y 
\]
for all $x,y\in X$. 

Assume first that 
    \[
    (p,\alpha,\lambda)\in\{(3,2,-1),(5,2,-1),(5,3,-1),(7,3,-1),(7,5,-1)\}.
    \]
    Then 
    $\NA(V)$ is one of the Nichols algebras
    of Examples~\ref{exa:dim12}, \ref{exa:dim1280a}, \ref{exa:dim1280b},
~\ref{exa:dim326592a},~\ref{exa:dim326592b} and therefore it is
finite-dimensional.

Assume now that 
    \[
    (p,\alpha,\lambda)\not\in\{(3,2,-1),(5,2,-1),(5,3,-1),(7,3,-1),(7,5,-1)\}.
    \]
    By assumption, $\ch(\fie)=p$. Let $(J^jV)_{j\ge 0}$ be the $J$-adic filtration of $V$ in Proposition~\ref{pro:J} and let $V^\gr$ be the associated graded Yetter--Drinfeld module. Then by functoriality, the filtration induces a filtration of $\NA (V)$. By Proposition~\ref{pr:Vgr}, $\gr\, \NA (V)$ is a pre-Nichols algebra of $\NA (V^\gr )$. It suffices to prove that $\dim \NA (V^\gr)=\infty $.

    By Proposition~\ref{pro:J}(3),
    there exist $g_0\in X$ and $0\ne v_0\in V_{g_0}$ such that the elements $y_j=(\gamma-1)^jv_0$ with $0\le j\le p-1$ form a basis of $V^\gr $. Moreover, the Yetter--Drinfeld structure of $V^\gr$ in Proposition~\ref{pro:J}(3) implies that
    \begin{align*}
    c^\gr(y_m\ot y_n)&=
    (y_m)_{(-1)} y_n\ot (y_m)_{(0)}\\
    &=\sum_{i=0}^m\binom m i (1-\alpha)^i(\gamma-1)^ig_0 y_n \ot y_{m-i}\\
    &=\sum_{i=0}^m\binom m i(1-\alpha )^i \lambda \alpha^n y_{n+i} \ot y_{m-i}
    \end{align*}
    for all $0\le m,n\le p-1$.
    Let $W$ be the braided vector space in Equation~\eqref{eq:W} corresponding to the parameters $a=\alpha $ and $b=\lambda $. By~\eqref{eq:braiding} and the above formula for $c^\gr$, the linear map
    \[ V^\gr \to W,\quad 
    y_m\mapsto \beta _m, \]
    is an isomorphism of braided vector spaces.
     Thus
     $\dim\NA(V^{\mathrm{gr}})=\infty$ by Lemma~\ref{lem:Kfinite}, and the proof is completed.
\end{proof}

\section{Proof of Theorem~\ref{thm:main}}
\label{ss:proof}

Let $X$ denote the support of $V$. By Corollary~\ref{cor:Vp}, 
$X\cong \Aff(p,\alpha )$ for $p=\dim (V)$ and some $\alpha \in \ndZ/p\ndZ\setminus\{0,1\}$, $\dim V_x=1$ for all $x\in X$, and there is a scalar $\lambda \in \fie ^\times $ and a family $(v_x)_{x\in X}$ of vectors $v_x\in V_x$, such that
\[ 
  x v_y=\lambda v_{x\triangleright y} 
\]
for all $x,y\in X$.

If $\lambda $ is not a root of $1$, or $\lambda=1$, then for all $x\in X$, $\NA (V_x)$ is infinite-dimensional (see e.g.~\cite[Example~1.9.6]{MR4164719}), and hence $\NA (V)$ is infinite-dimensional.

Assume that $\lambda $ is a root of $1$ and $\lambda \ne 1$. Let $R=\ndZ [\lambda ]\subseteq \fie $. Let $V_R$ be the (free) $R$-submodule of $V$ generated by the vectors $v_x$ with $x\in X$. Then $V_R$ is an $R$-order of the braided vector space $V$. Let $\fm$ be a maximal ideal of $R$ containing $p$. Then $\ch(R/\fm)=p$.

Assume first that $\lambda \ne -1$ in $R/\fm$ or
   \[
    (p,\alpha)\notin\{(3,2),(5,2),(5,3),(7,3),(7,5)\}.
    \]
    Then $\NA (V_{R,\fm})$ is infinite-dimensional by Theorem~\ref{thm:char_p}. Hence $\NA (V)$ is infinite-dimensional by Lemma~\ref{lem:specialfinite}.
    
Assume now that $\lambda = -1$ in $R/\fm$ and
   \[
    (p,\alpha)\in\{(3,2),(5,2),(5,3),(7,3),(7,5)\}.
    \]
If $\lambda =-1$ in $R$, then $\lambda=-1$ in $\fie $ and hence $V$ is one of the Examples~\ref{exa:dim12}--\ref{exa:dim326592b}. In this case, $\NA (V)$ is finite-dimensional.
Otherwise, assume that $\lambda \ne -1$ in $R$. Then the order of $\lambda $ in $\fie $ is $2p^k$ for some $k\ge 1$. 
Moreover, since 
$|\alpha|$ is the order of the conjugation action, 
$\gcd(|\alpha|,p)=1$. In particular, $\gcd(2p^k,|\alpha|)\in \{1,2\}$.
Then $\NA(V)$ is
infinite-dimensional by Theorem~\ref{thm:2primes}, and the proof of the theorem is completed.

\begin{rem}
    Theorem~\ref{thm:main} is not valid if $V$ is assumed to be irreducible but not absolutely irreducible. Indeed, let $\fie =\mathbb{R}$ and let $V=V_g$ be a $2$-dimensional Yetter--Drinfeld module over $G=\ndZ$, where $g$ is a generator of the group $\ndZ$. Assume that $g^2+g+1$ acts by $0$ on $V$. Then $\mathbb{C}\ot _{\mathbb{R}}V$ is a braided vector space of diagonal type, and $\dim \NA (V)=9$.
\end{rem}

\section{An example}
\label{ss:example}

In this section, we work out explicitly our results in the case
of 3-dimensional Yetter--Drinfeld modules.

Let $\fie$ be a field of characteristic zero.  
Let 
\[ 
G=\langle s_1,s_2:s_1^2=s_2^2,\,s_1s_2s_1=s_2s_1s_2\rangle.
\]
Then $G$ is a central
extension of the symmetric group $\mathbb{S}_3$. Let $X$ be the conjugacy class of 
$s_1$ in $G$, that is, $X=\{s_1,s_2,s_1s_2s_1^{-1}\}$. Let $\lambda\in\fie^{\times}$ be a root of $1$, and let 
$V$ be the Yetter--Drinfeld module over $\fie G$ with basis 
$\{v_x:x\in X\}$ and 
\[
xv_y=\lambda v_{xyx^{-1}},\quad x,y\in X.
\]

If $\lambda=-1$, then $\NA(V)$ is finite-dimensional, see Example~\ref{exa:dim12}. 

Note that $X=\supp V$ and that the map
\[ \varphi\colon\Aff(3,-1)\to X,\quad 0\mapsto s_1,\,1\mapsto s_2,\,2\mapsto s_1s_2s_1^{-1}, \] 
is a rack isomorphism. By Lemma~\ref{lem:GXformulas}, $\gamma=\varphi(0)\varphi(1)^{-1}$ generates the derived subgroup of $G$, and has order $3$.

For the following, let $R=\ndZ[\lambda ]$ and let $V_R$ be the $R$-submodule of $V$ generated by $\{v_x:x\in X\}$. Take a maximal ideal $\fm $ of $R$ containing $3$, and define \[ V_{R,\fm}=R/\fm \ot _R V_R. \]

Theorem~\ref{thm:2primes} implies that if the order of $\lambda$ is
divisible by at least two prime factors (in particular, if the order of $\lambda $ is $2\cdot 3^r$ for some $r\ge 1$), then $\dim\NA(V)=\infty$. The proof of Theorem~\ref{thm:2primes} follows the following steps:
\begin{enumerate}
    \item identify a $3$-dimensional Yetter--Drinfeld module $W$ of primitive elements in an associated graded Hopf algebra of $R/\fm \ot_R \NA (V_R)$, spanned by powers of the generators of $V_{R,\fm}$;
    \item use the classification of Nichols algebras of semisimple Yetter--Drinfeld modules to observe that $\NA (V_{R,\fm}\oplus W)$ is infinite-dimensional;
    \item use Lemma~\ref{lem:specialfinite} to conclude that $\NA (V)$ is infinite-dimensional.
\end{enumerate}

Assume now that the order of $\lambda $ in $R$ is different from $2\cdot 3^r$ for all $r\ge 0$, that is, $\lambda 1\ot _R R/\fm \ne -1$.  Then $\NA (V_{R,\fm})$ is infinite-dimensional by Theorem~\ref{thm:char_p}. (For the proof of this theorem one uses the filtration of $V$ induced by the powers of the radical of the derived subgroup of $G$ and the classification of Nichols algebras of diagonal type in prime characteristic.) Consequently, $\NA (V)$ is infinite-dimensional by Lemma~\ref{lem:specialfinite}.

\section{Appendix: On filtrations of braided vector spaces}
\label{ss:appendix}

In this appendix, we determine some consequences of the existence of filtrations
of certain braided vector spaces. We prepare the claims with a lemma.

\begin{lem}
\label{lem:f(i)}
    Let $V$ be a finite-dimensional vector space together with a direct sum decomposition $V=V(1)\oplus V(2)\oplus \cdots \oplus V(l)$ and a flag
    \[
    V=\cF^0V\supseteq \cF^1V \supseteq \cdots \supseteq \cF^mV=\{0\}
    \]
    of subspaces with $l,m\ge 1$.
    Let $n=\dim V$, and for all $1\le i\le n$ let
    \[ f(i)=\max\{ k\ge 0 : i\le \dim \cF^kV \}.
    \]
    Then there exist bases $(x_i)_{1\le i\le n}$ and $(b_i)_{1\le i\le n}$ of $V$, and an upper triangular matrix $S=(s_{ij})\in \fie ^{n\times n}$ with diagonal entries $1$, satisfying all of the following properties:
    \begin{enumerate}
        \item For each $1\le i\le n$ there exists $1\le g(i)\le l$ such that $x_i\in V(g(i))$.
        \item For each $0\le k\le m-1$, the vectors $b_i$ with $1\le i\le \dim \cF^kV$ form a basis of $\cF^kV$.
    \item For all $1\le i\le n$, 
        \[
        b_i=x_i+\sum_{j>i}s_{ij}x_j
        =\displaystyle{x_i+\sum_{\substack{j:f(j)<f(i)\\g(j)\ne g(i)}}s_{ij}x_j}.
        \]
    \end{enumerate}
\end{lem}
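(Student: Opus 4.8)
The plan is to build the two bases simultaneously by a Gram--Schmidt-type process that respects both the direct sum decomposition and the flag. First I would reorganize the data: for $0\le k\le m$ set $d_k=\dim\cF^kV$, so $n=d_0\ge d_1\ge\cdots\ge d_m=0$, and note that $f(i)$ is precisely the unique $k$ with $d_{k+1}<i\le d_k$; in particular $i\mapsto f(i)$ is (weakly) decreasing and $i\le d_{f(i)}$. The key point is that for each $k$ the quotient $\cF^kV/\cF^{k+1}V$ inherits a decomposition into images of the $V(j)$'s, but these images need not be independent; so I will choose, for the ``new'' coordinates $d_{k+1}<i\le d_k$, vectors $x_i$ lying in a single summand $V(g(i))$ whose classes form a basis of a complement of $\cF^{k+1}V$ inside $\cF^kV$. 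Concretely: go through $k=m-1,m-2,\dots,0$ in turn; having already produced $x_i$ and $b_i$ for $i>d_{k+1}$ spanning $\cF^{k+1}V$ with $b_i\in\cF^{f(i)}V$, pick vectors in $\cF^kV$ that together with $\cF^{k+1}V$ span $\cF^kV$ and that can be taken inside individual summands $V(j)$ (possible since $\cF^kV=\bigoplus_j(\cF^kV\cap(\text{sum of chosen }V(j)\text{-parts}))$ — here I would argue more carefully, see the obstacle below), label them $x_{d_{k+1}+1},\dots,x_{d_k}$, recording $g(i)$, and set $b_i=x_i$ for now.

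Once all $x_i$ and a first approximation of the $b_i$ are in place, I would fix up the $b_i$ to get the exact triangular shape in (3). By construction $b_i=x_i\in\cF^{f(i)}V$, and $\cF^{f(i)}V$ is spanned by $\{x_j:f(j)\ge f(i)\}=\{x_j:j\le d_{f(i)}\}$ together with lower-flag pieces; more precisely the classes $\overline{x_j}$ with $f(j)=f(i)$ are, by the choice above, linearly independent in $\cF^{f(i)}V/\cF^{f(i)+1}V$. So I can subtract from $b_i$ a unique combination of the $x_j$ with $f(j)<f(i)$ to land back in $\cF^{f(i)}V$ — wait, $x_i$ is already there; rather, I should instead \emph{add} to $x_i$ a combination of the $x_j$ with $f(j)<f(i)$ so as to make the span of $b_1,\dots,b_{d_k}$ equal to $\cF^kV$ for every $k$. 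The honest way: define $b_i$ by downward induction on $f(i)$, requiring that $\{b_j:f(j)\ge k\}$ be a basis of $\cF^kV$; at each stage the needed correction of $x_i$ by $\sum_{j:f(j)<f(i)}s_{ij}x_j$ exists and is unique because we are expressing a given spanning set in terms of $\{x_j:f(j)<f(i)\}$ modulo the already-fixed top part. Finally, to upgrade the sum in (3) from ``$f(j)<f(i)$'' to ``$f(j)<f(i)$ and $g(j)\ne g(i)$'', I would observe that the correction term $\sum s_{ij}x_j$ lives in $\cF^{f(i)+1}V$ and can be chosen inside the sum of the summands $V(j')$ with $j'\ne g(i)$: indeed $x_i$ and all $x_j$ with $g(j)=g(i)$ lie in the single summand $V(g(i))$, and since $b_i-x_i\in\cF^{f(i)+1}V$ we may project away the $V(g(i))$-component without leaving $\cF^{f(i)+1}V$ (this uses that each $\cF^kV$, having been built from pieces of individual summands, is a direct sum of its intersections with the $V(j)$ — the same structural fact needed earlier). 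After this, reindexing $x_1,\dots,x_n$ and $b_1,\dots,b_n$ consistently (so that $i\mapsto f(i)$ is weakly decreasing and ties broken arbitrarily) gives the upper-triangular matrix $S=(s_{ij})$ with ones on the diagonal, establishing (1)--(3).

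The main obstacle is the claim that each $\cF^kV$ decomposes as $\bigoplus_{j=1}^l(\cF^kV\cap V(j)')$ for a suitable choice of coordinate subspaces, i.e.\ that we can pick the $x_i$ inside individual summands $V(j)$ — a general flag need not be compatible with a general direct sum decomposition, so this must be read correctly. What is actually true and suffices is the following: we do \emph{not} need $\cF^kV$ to be spanned by its intersections with the $V(j)$; we only need, at each step, vectors in $\cF^kV$ whose \emph{images} in $\cF^kV/\cF^{k+1}V$ form a basis of some complement and which can individually be taken in a single $V(j)$ — and then the correction terms moving them back into the flag are what break the $g(j)=g(i)$ coordinates. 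So the real work is to check that the space $\cF^kV$, modulo $\cF^{k+1}V$, is spanned by images of vectors each lying in a single summand (clear, since $\cF^kV$ is spanned by arbitrary vectors, each of which is a sum of its summand-components, and at least one component has nonzero image whenever the vector does), and then to organize the bookkeeping of indices so that $f$ and $g$ interact as in the displayed formula. I expect the index-chasing in part (3), rather than any deep idea, to be the delicate part; everything else is linear algebra of flags versus decompositions done one graded layer at a time.
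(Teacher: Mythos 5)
There is a genuine gap, and it sits exactly where you flag ``the main obstacle'': your construction needs, for each layer $k$, vectors lying \emph{simultaneously} in $\cF^kV$ and in a single summand $V(j)$, whose classes span $\cF^kV/\cF^{k+1}V$. This is impossible in general. Take $V=\fie^2$ with $V(1)=\fie e_1$, $V(2)=\fie e_2$ and the flag $V\supseteq \fie(e_1+e_2)\supseteq\{0\}$: the line $\cF^1V$ contains no nonzero vector of a single summand, so no $x_1\in\cF^1V\cap V(g(1))$ exists. Your attempted repair does not close this: the justification ``each vector of $\cF^kV$ is a sum of its summand-components, and at least one component has nonzero image'' fails because those components need not lie in $\cF^kV$ at all, hence have no image in $\cF^kV/\cF^{k+1}V$; and the asserted decomposition of $\cF^kV$ into its intersections with the $V(j)$ (used again to ``project away the $V(g(i))$-component without leaving $\cF^{f(i)+1}V$'') is false for the same reason. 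A symptom of the confusion is internal: if $x_i\in\cF^{f(i)}V$ with the same class as $b_i$, then $b_i-x_i\in\cF^{f(i)+1}V$ and would be a combination of vectors from \emph{deeper} layers, whereas your own formula (and the lemma) put the corrections in the \emph{shallower} layers $f(j)<f(i)$ --- in the example above, $b_1=e_1+e_2=x_1+x_2$ with $f(2)=0<1=f(1)$. The whole point of the correction terms is that the $x_i$ cannot be forced into the flag.

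The lemma is true, but the argument must run the other way. The paper's proof starts from an \emph{arbitrary} basis $(x_i)$ adapted to the decomposition and an \emph{arbitrary} basis $(b_i)$ adapted to the flag, writes $b_i=\sum_j s_{ij}x_j$, and performs a structured LU-type reduction on $S$: permute the $x_j$ (columns) so that, layer by layer from the deepest, a suitable square submatrix is invertible; modify the $b_i$ by adding elements of deeper flag pieces (row operations preserving property (2)) to make $S$ upper unipotent with $s_{ij}\ne 0\Rightarrow f(j)<f(i)$; and finally absorb the remaining entries with $g(j)=g(i)$ into $x_i$ itself, which is legitimate precisely because all absorbed terms lie in the single summand $V(g(i))$, so the new $x_i$ still satisfies (1). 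If you want to salvage a layer-by-layer picture, the correct graded fact is only that each class in $\cF^kV/\cF^{k+1}V$ is represented by some vector of $V$ whose leading summand-component can be singled out after a permutation of indices; the discrepancy between that representative and an actual element of $\cF^kV$ is exactly what the upper-triangular corrections with $f(j)<f(i)$ record.
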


\begin{proof}
    The claim and the proof are a variation of the LU decomposition of an invertible square matrix.
    
    Let $\cX=(x_i)_{1\le i\le n}$ and $\cB=(b_i)_{1\le i\le n}$ be 
    bases of $V$ satisfying (1) and (2), respectively.
    Let $S=(s_{ij})\in \fie ^{n\times n}$ be such that $b_i=\sum_{j=1}^ns_{ij}x_j$ for all $1\le i\le n$. Clearly, $S$ is invertible. We are going to modify $\cX$ and $\cB$ step by step such that properties (1) and (2) of the lemma are preserved and $S$ approaches the desired form in (3).

    \textit{Step 1. We may assume that $S$ is an upper triangular matrix with $1$'s on the diagonal, and $s_{ij}\ne 0$ with $i<j$ implies that $f(j)<f(i)$.} Indeed, the first $\dim \cF^{m-1}V$ rows of $S$ are linearly independent. Thus there exist $\dim \cF^{m-1}V$ columns of $S$ such that the corresponding square submatrix has full rank. By permuting the basis vectors of $\cX$, we may assume that these are the first $\dim \cF^{m-1}V$ columns. After applying appropriate row transformations of $S$ (change of vectors $b_i$ in $\cB$ with $f(i)=m-1$) we may assume that $s_{ij}=\delta_{ij}$ (Kronecker's delta) for all $1\le i,j\le \dim \cF^{m-1}V$.

    With the remaining rows of $S$ we proceed by induction. Regarding vectors $b_i\in \cB$ with $f(i)=t$ for some $t$, we first add to them appropriate vectors in $\cF^{t+1}V$ in order to achieve that $s_{ij}=0$ for all $j$ with $f(j)>f(i)$. Then we choose $\dim \cF^tV-\dim \cF^{t+1}V$ vectors from $\cX$ such that the corresponding square submatrix of $S$ (with rows $i$ such that $f(i)=t$) has full rank, and permute them to the columns $j$ with $f(j)=t$. After suitable row transformations in $S$
    (change of vectors $b_i$ in $\cB$ with $f(i)=t$) we may assume that $s_{ij}=\delta_{ij}$ (Kronecker's delta) for all $1\le i,j\le \dim \cF^tV$ with $f(j)\ge f(i)$. We then proceed similarly with the rows $i$ with $f(i)=t-1$.

    \textit{Step 2. We may assume additionally that $s_{ij}=0$ whenever $1\le i<j\le n$, $f(j)<f(i)$, and $g(j)=g(i)$.}
    Indeed, let $i_0$ be the smallest integer such that there exists $j$ with
    $f(j)<f(i_0)$ and $g(j)=g(i_0)$.
    Then replace $x_{i_0}$ by 
    \[
    x_{i_0}+\sum_{\substack{j: f(j)<f(i_0)\\g(j)=g(i_0)}}s_{{i_0}j}x_j.
    \]
    After this transformation, the matrix $S$ does not change in rows $>i_0$, 
    and the basis vectors $b_i$ with $i\le i_0$ will satisfy the required property in (3).
    Thus, by induction on the rows of $S$, we may complete the construction of $\cX$ and $\cB$ with the claimed properties.
\end{proof}

\begin{pro}
\label{pro:nilpotent}
    Let $G$ be a group and $V\in \yd{\fie G}$. Assume that $\dim V<\infty$,
    $V$ is irreducible, and $G$ is generated by the support of $V$. Let $(\cF^iV)_{i\ge 0}$ be a decreasing filtration of the braided vector space $V$ with
    $\cF^1V\ne\{0\}$. Then $h\cF^iV\subseteq \cF^iV$ for all $h\in G$ and $i\ge 0$. Moreover, there exists a 
    normal subgroup $N\ne \{1\}$ of $G$ such that $N\subseteq [G,G]$ and 
    \[ (g-1)\cF^iV\subseteq \cF^{i+1}V
    \]
    for all $g\in N$ and $i\ge 0$. 
\end{pro}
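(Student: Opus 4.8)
The plan is to deduce $G$-stability of the filtration directly from the braiding, and then to locate $N$ inside the kernel of the induced action on the associated graded braided vector space $V^\gr$.

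\emph{$G$-stability.} Fix $x$ in the support $X=\{x\in G:V_x\ne 0\}$, choose $0\ne v\in V_x$, and let $a\ge 0$ be maximal with $v\in\cF^aV$; such an $a$ exists since $\bigcap_i\cF^iV=\{0\}$. For $w\in\cF^jV$ we have $c(v\ot w)=xw\ot v$ because $\delta(v)=x\ot v$, whereas property~(3) of a decreasing filtration gives $c(v\ot w)\in\bigoplus_{k+l\ge a+j}\cF^kV\ot\cF^lV$. Applying $\id\ot q$, with $q\colon V\to V/\cF^{a+1}V$ the quotient map, annihilates every summand with $l>a$ and, in the surviving summands, forces $k\ge j$; hence $xw\ot q(v)\in\cF^jV\ot(V/\cF^{a+1}V)$, and since $q(v)\ne 0$ this gives $xw\in\cF^jV$. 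Thus $x\cF^jV\subseteq\cF^jV$, and finite-dimensionality upgrades this to $x\cF^jV=\cF^jV$. As $X$ generates $G$, every $h\in G$ preserves every $\cF^jV$, which is the first assertion.

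\emph{Construction of $N$.} By the above, $G$ acts on $V^\gr=\bigoplus_j\cF^jV/\cF^{j+1}V$; let $N_0$ be the kernel of this action, i.e.\ $N_0=\{g\in G:(g-1)\cF^jV\subseteq\cF^{j+1}V\text{ for all }j\}$. The identities $gg'-1=g(g'-1)+(g-1)$ and $g^{-1}-1=-g^{-1}(g-1)$, together with $h\cF^jV=\cF^jV$, show $N_0$ is a subgroup, and it is normal since $hgh^{-1}-1=h(g-1)h^{-1}$. Set $N:=N_0\cap[G,G]$: this is normal, contained in $[G,G]$, and satisfies $(g-1)\cF^iV\subseteq\cF^{i+1}V$ for every $g\in N$. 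It remains to prove $N\ne\{1\}$.

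\emph{A reduction from irreducibility.} For $x\in X$ let $\pi_x\colon V\to V_x$ be the canonical projection. Since $\pi_{gxg^{-1}}\circ g=g\circ\pi_x$ on $V$ and $g\cF^1V=\cF^1V$, the subspace $\widetilde U:=\bigoplus_{x\in X}\pi_x(\cF^1V)$ is $G$-graded and $G$-stable, hence a Yetter--Drinfeld submodule of $V$. As $\cF^1V\ne\{0\}$ it is nonzero, so irreducibility of $V$ forces $\widetilde U=V$; that is, $\pi_x(\cF^1V)=V_x$ for every $x\in X$ (and likewise with $\cF^1V$ replaced by any nonzero $\cF^jV$).

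\emph{The main step: $N\ne\{1\}$.} The goal is to show that all $x\in X$ induce one and the same automorphism $\rho(x)$ of $V^\gr$. Since $[G,G]$ is generated by the elements $xy^{-1}$ with $x,y\in X$ (Lemma~\ref{lem:Gcommfinite}), this gives $[G,G]\subseteq N_0$, hence $N=[G,G]$, which is nontrivial as soon as $G$ is non-abelian; the remaining case $\cF^1V=V$ is degenerate (it includes the abelian situation) and would be excluded or treated apart. To compare the induced operators, one uses that the degeneration satisfies $c^\gr(\overline v\ot-)=\rho(x)(-)\ot\overline v$ for the class $\overline v$ of any homogeneous $0\ne v\in V_x$, so that $c^\gr$ records $\rho(x)$; combined with $\pi_x(\cF^1V)=V_x$ --- which makes the classes $\overline v$, $v\in V_x$, span $V/\cF^1V$ --- and with the irreducibility of $V$, this should force all the $\rho(x)$ to agree, since otherwise a nontrivial action of some $xy^{-1}\in[G,G]$ on $V^\gr$ could be lifted along the filtration to a proper nonzero Yetter--Drinfeld submodule of $V$. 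This final comparison --- showing, in effect, that passing to $V^\gr$ trivializes the derived subgroup --- is the part I expect to be the genuine obstacle; everything preceding it is routine.
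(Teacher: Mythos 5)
Your first assertion ($h\cF^iV\subseteq\cF^iV$ for all $h\in G$) is proved correctly, and in fact more directly than in the paper: pairing a homogeneous $v\in V_x$ of exact filtration degree $a$ against $w\in\cF^jV$ and projecting the second tensor factor modulo $\cF^{a+1}V$ is a clean way to extract $xw\in\cF^jV$ from the filtration condition on $c$. The definition of $N_0$ as the kernel of the induced $G$-action on $V^\gr$ and of $N=N_0\cap[G,G]$ is also fine. But the entire content of the ``moreover'' clause is the claim $N\ne\{1\}$, and that is exactly the step you leave unproved: you state the goal (all $x$ in the support induce the same operator on $V^\gr$), sketch why it would suffice, and then acknowledge that you do not know how to establish it. So there is a genuine gap, and it sits at the heart of the proposition.

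Moreover, the statement you aim for is stronger than what is needed and than what the paper proves: it would give $[G,G]\subseteq N_0$, i.e.\ $N=[G,G]$, whereas the proposition only asserts the existence of \emph{some} nontrivial normal $N\subseteq[G,G]$. The paper's argument produces a single element: using Lemma~\ref{lem:f(i)} it builds a basis $(x_i)$ adapted to the $G$-grading and a basis $(b_i)$ adapted to the flag, related by an upper triangular matrix $S=(s_{ij})$ whose diagonal blocks (entries with $f(i)=f(j)$) vanish off the diagonal and whose entries with $g(i)=g(j)$ vanish. Irreducibility together with $\cF^1V\ne\{0\}$ forces some $s_{ij}\ne 0$ with $i<j$ (otherwise every $\cF^kV$ would be a Yetter--Drinfeld submodule), and choosing such a pair with $f(i)-f(j)$ minimal, the explicit formula~\eqref{eq:cbb} for $c(b_i\ot b_j)$ and the filtration condition force $g_i\ne g_j$ and $(g_i^{-1}g_j-1)\cF^lV\subseteq\cF^{l+1}V$ for all $l$; the normal closure of $g_i^{-1}g_j$ is then the required $N$. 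Your ``lifting a nontrivial action to a proper submodule'' heuristic does not substitute for this quantitative analysis of the change-of-basis matrix, and your auxiliary observation $\pi_x(\cF^1V)=V_x$, while correct, is never brought to bear on the comparison of the operators $\rho(x)$. To complete your proof you would need an argument pinning down at least one nontrivial element of $[G,G]$ inside $N_0$; as written, none is provided.
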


\begin{proof}
  Let $n=\dim V$ and let $\cX=(x_i)_{1\le i\le n}$ and $\cB=(b_i)_{1\le i\le n}$ be bases of $V$ as in Lemma~\ref{lem:f(i)} with respect to the direct sum decomposition of $V$ as a $\fie G$-comodule and the flag of subspaces corresponding to the filtration $(\cF^kV)_{k\ge 0}$. For all $1\le i\le n$ let \[ f(i)=\max\{k\ge 0: i\le \dim \cF^kV\}\]
  as in Lemma~\ref{lem:f(i)}. Then for each $k\ge 0$, the vectors
  \[ (b_i)_{f(i)\ge k} \]
  form a basis of $\cF^kV$.
  For all $1\le i\le n$ let $g_i\in G$ such that $x_i\in V_{g_i}$.
  Let $S=(s_{ij})_{1\le i,j\le n}$ be the upper triangular matrix from Lemma~\ref{lem:f(i)}. We note that
  \begin{enumerate}
      \item[(S1)] $s_{ij}=0$ for all $i,j$ with $i\ne j$ and $f(i)=f(j)$, and
      \item[(S2)] $s_{ij}=0$ for all $i,j$ with $i<j$ and $g(i)=g(j)$.
 \end{enumerate}
  
  Let now $(t_{ij})_{1\le j\le n}$ be the upper triangular matrix with 1's on the diagonal such that
  \[
    x_i=b_i+\sum_{j>i}t_{ij}b_j
  \]
for all $1\le i\le n$.
Then
\begin{align} \label{eq:strel} 
  s_{ij}+t_{ij}+\sum_{i<k<j} s_{ik}t_{kj}=0
\end{align}
  for all $i,j$ with $i<j$.
  Moreover, (S1) implies that
  \begin{enumerate}
      \item[(T1)] $t_{ij}=0$ for all $i,j$ with $i<j$ and $f(i)=f(j)$.
  \end{enumerate}
  
  One obtains for all $1\le i,j\le n$ that 
  \begin{align*}
      c(b_i\ot b_j)&=
      c\Big( \big(x_i+\sum_{k>i}s_{ik}x_k\big)\ot b_j\Big)\\
      &=g_ib_j\ot \Big(b_i+\sum_{k>i}t_{ik}b_k\Big)
      +\sum_{k>i}s_{ik}g_kb_j \ot \Big( b_k+\sum_{l>k}t_{kl}b_l\Big)\\
      &=g_ib_j\ot b_i+\sum_{k>i}t_{ik}g_i b_j\ot b_k
      +\sum_{k>i}s_{ik}g_kb_j \ot b_k
      +\sum_{k>l>i}s_{il}t_{lk} g_lb_j\ot b_k.
\end{align*}
We can slightly reformulate the last term by inserting and subtracting additional terms. We obtain that
\begin{align*}
      c(b_i\ot b_j)=
      g_ib_j\ot b_i&+\sum_{k>i}(t_{ik}+s_{ik})g_i b_j\ot b_k
      +\sum_{k>i}s_{ik}(g_kb_j-g_ib_j)\ot b_k\\
      &+\sum_{k>l>i}s_{il}t_{lk} (g_lb_j-g_ib_j)\ot b_k
      +\sum_{k>l>i}s_{il}t_{lk} g_ib_j\ot b_k.
\end{align*}
Then Equation~\eqref{eq:strel} gives the formula
\begin{equation}
\label{eq:cbb}
\begin{aligned}
 c(b_i\ot b_j) =
      g_ib_j\ot b_i
      &+\sum_{k>i}s_{ik}(g_kb_j-g_ib_j)\ot b_k\\
      &+\sum_{k>l>i}s_{il}t_{lk} (g_lb_j-g_ib_j)\ot b_k.
\end{aligned}
\end{equation}

Recall that for all $m\ge 0$, the vectors $(b_\alpha)_{f(\alpha )\ge m}$ form a basis of $\cF^mV$. Hence for all $m\ge 0$, the vectors $b_\alpha \ot b_\beta $ with $f(\alpha )+f(\beta)\ge m$
form a basis of the vector space $\sum_{k+l\ge m}\cF^kV\ot \cF^lV$.

Now take $1\le i,j\le n$. Then $b_i\ot b_j\in \cF^{f(i)}V\ot \cF^{f(j)}V$. Let $m=f(i)+f(j)$. Since $(\cF^kV)_{k\ge 0}$ is a decreasing filtration of the braided vector space $V$, we conclude that
$c(b_i\ot b_j)$ is a linear combination of the tensors
$b_\alpha \ot b_\beta $ with $f(\alpha)+f(\beta)\ge m$.
Then Equation~\eqref{eq:cbb} and (S1) imply that $g_ib_j\in \cF^{f(j)}V$, and since $G$ is generated by the elements $g_k$ with $1\le k\le n$, we conclude that
\begin{enumerate}
    \item[(C1)] for all $k\ge 0$, $\cF^kV$ is a $\fie G$-submodule of $V$.
 \end{enumerate}
 
It also follows from the previous paragraph that
\begin{enumerate}
    \item[(S3)] there exist $i<j$ with $s_{ij}\ne 0$.
\end{enumerate}
Ideed, if $s_{ij}=0$ for all $i<j$, then $b_i\in V_{g_i}$ for all $1\le i\le n$. In particular, $\cF^iV$ is a $\fie G$-subcomodule for all $i$, and hence a Yetter--Drinfeld submodule because of (C1). However, this contradicts the irreducibility of $V$ and 
the fact that $\cF^1V\ne\{0\}$.

Now let $d=\min \{ f(i)-f(j): i<j,s_{ij}\ne 0\}$. Then, by (T1) and Equation~\eqref{eq:strel}, it follows that
\begin{enumerate}
    \item[(T2)] $t_{ij}=0$ for all $i<j$ with $f(i)-f(j)<d$.
\end{enumerate}
Now the assumptions on $c$ and Equation~\eqref{eq:cbb} imply that
\[ s_{ik}(g_kb_j-g_ib_j)\in \cF^{f(j)+d} \quad
\text{for all $i<k$ with $f(i)-f(k)=d$.} 
\]
Using (S2) and (S3), it follows that
    \begin{enumerate}
    \item[(C2)] there exist $i<j$ such that $s_{ij}\ne 0$, $g_i\ne g_j$, and $g_i^{-1}g_jv-v\in \cF^{l+1}V$ for all $l\ge 0$ and $v\in \cF^lV$.
\end{enumerate}
Since $g_i^{-1}g_j\in [G,G]$ for all $1\le i<j\le n$, (C2) implies that the normal subgroup $N$ of $G$ generated by $g_i^{-1}g_j$ in (C2) satisfies the properties required in the proposition.
This completes the proof.
\end{proof}

\begin{cor}
\label{cor:simples}
    Let $G$ be a finite group generated by a conjugacy class $X$. Assume that
    the derived subgroup $[G,G]$ is simple non-abelian. Let $V\in\yd{\fie G}$ be 
    irreducible with support $X$. Then $V$ does not admit a non-trivial 
    decreasing filtration.  
\end{cor}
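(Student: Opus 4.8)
The plan is to apply Proposition~\ref{pro:nilpotent} directly. Suppose for contradiction that $V$ admits a non-trivial decreasing filtration $(\cF^iV)_{i\ge 0}$, meaning $\cF^1V\ne\{0\}$. Since $V$ is irreducible and $G$ is generated by its support $X$, Proposition~\ref{pro:nilpotent} produces a normal subgroup $N\ne\{1\}$ of $G$ with $N\subseteq [G,G]$ such that $(g-1)\cF^iV\subseteq\cF^{i+1}V$ for all $g\in N$ and all $i\ge 0$.

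\smallskip
The key step is to use the normality of $N$ together with the hypothesis that $[G,G]$ is simple non-abelian. Since $N$ is a non-trivial normal subgroup of $G$ contained in $[G,G]$, and $N$ is in particular normal in $[G,G]$, simplicity of $[G,G]$ forces $N=[G,G]$. Now iterate the displayed inclusion: for $g_1,\dots,g_k\in N$ we get $(g_1-1)(g_2-1)\cdots(g_k-1)\cF^0V\subseteq\cF^kV$. Taking $k$ large enough that $\cF^kV=\{0\}$ (which exists since $\dim V<\infty$ and the filtration is separated and decreasing by dimension), we conclude that the augmentation ideal of $\fie N$ acts nilpotently on $V$; equivalently, $\fie N$ acts on $V$ through a local quotient, so the subspace $V^N$ of $N$-invariants is non-zero.

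\smallskip
Finally I would derive the contradiction with irreducibility. Since $N=[G,G]$ is normal in $G$, the subspace $V^N$ is a $\fie G$-submodule of $V$; moreover $\cF^kV$ is a $\fie G$-submodule for every $k$ by part (C1) in the proof of Proposition~\ref{pro:nilpotent}, and $V^N$ is also a $\fie G$-subcomodule because the coaction is $\fie G$-colinear and $N$ acts trivially on it compatibly with the grading (the support lies in a single conjugacy class whose elements need not centralize $N$, so one must be slightly careful here — see below). The cleanest route is: $V^N\ne\{0\}$ and $V^N\subsetneq V$ (the latter because $N$ acting trivially on all of $V$ would make $V$ a module over $G/N=G/[G,G]$, which is abelian, contradicting that $V$ is irreducible of dimension $\dim V=|X|>1$ over a non-abelian group — indeed an irreducible Yetter--Drinfeld module over an abelian group is one-dimensional, but $|X|>1$ since $[G,G]\ne\{1\}$). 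This contradicts the irreducibility of $V$ as a Yetter--Drinfeld module, provided $V^N$ is a Yetter--Drinfeld submodule.

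\smallskip
The main obstacle is precisely verifying that $V^N$ (or some non-zero $\fie G$-submodule obtained from the nilpotent action of $N$) is a Yetter--Drinfeld \emph{submodule}, i.e.\ stable under the coaction, not merely under the $G$-action. Since $N$ is normal, conjugation by any $h\in G$ permutes $N$, so $h(V^N)=V^{N}$ gives $G$-stability; for comodule-stability one uses that for $v\in V^N$ homogeneous of degree $x$ and $g\in N$, we have $gv=v$, but also $\delta(gv)=gxg^{-1}\ot gv_{(0)}$-type relations from the Yetter--Drinfeld axiom, which do not immediately give $\delta(v)\in \fie G\ot V^N$ unless one knows more. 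The safe workaround is to avoid $V^N$ entirely and instead argue that $\cF^{k}V$ for the \emph{largest} $k$ with $\cF^kV\ne\{0\}$ is a non-zero proper $\fie G$-submodule which is additionally a subcomodule: by minimality of that filtration step together with the refined structure in the proof of Proposition~\ref{pro:nilpotent}, one shows the off-diagonal coaction components $s_{ij}$ restricted to this bottom piece vanish, making $\cF^kV$ graded, hence a Yetter--Drinfeld submodule, contradicting irreducibility. Sorting out which of these two finishing arguments is cleanest is the only real work; everything else is a formal consequence of Proposition~\ref{pro:nilpotent} and simplicity of $[G,G]$.
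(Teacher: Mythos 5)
Your first two steps match the paper exactly: invoke Proposition~\ref{pro:nilpotent} to get a non-trivial normal subgroup $N\subseteq [G,G]$ with $(g-1)\cF^iV\subseteq \cF^{i+1}V$, and conclude $N=[G,G]$ by simplicity. After that, however, your argument has a genuine gap, which you in fact diagnose yourself: from the nilpotent action of the augmentation ideal you only obtain $V^N\ne\{0\}$, and $V^N$ is \emph{not} in general a subcomodule (for $v=\sum_x v_x\in V^N$ and $g\in N$ one gets $gv_x=v_{gxg^{-1}}$, i.e.\ the homogeneous components of an invariant vector are merely permuted by $N$, not fixed), so you cannot contradict irreducibility of $V$ as a Yetter--Drinfeld module this way. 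A proper $\fie G$-submodule alone is no contradiction either, since an irreducible Yetter--Drinfeld module need not be irreducible as a $G$-module. Your proposed workaround via the bottom filtration step $\cF^kV$ also does not go through as stated: by Lemma~\ref{lem:f(i)} the basis vectors $b_i$ of the deepest piece are of the form $x_i+\sum_{f(j)<f(i)}s_{ij}x_j$, and there is no reason for these correction terms to vanish, so $\cF^kV$ need not be a subcomodule.

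The paper closes the argument differently, and the missing ingredient is the finite order of group elements. Pick $1\ne g\in N=[G,G]$ whose order $m$ is coprime to $\ch(\fie)$ (such $g$ exists because a non-abelian simple group is not a $p$-group). The inclusion $(g-1)\cF^iV\subseteq\cF^{i+1}V$ makes $g-1$ nilpotent on $V$, i.e.\ $g$ acts unipotently; but an operator of finite order coprime to the characteristic is semisimple (in characteristic $p$ one sees this via $(g-1)^{p^k}=g^{p^k}-1$ and $g^{p^k}=g$ for suitable $k$), so $g$ acts as the identity on $V$. The kernel of the $[G,G]$-action on $V$ is then a non-trivial normal subgroup of $G$ inside $[G,G]$, hence all of $[G,G]$ by simplicity. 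The contradiction now comes not from exhibiting a proper Yetter--Drinfeld submodule but from the support: since $hV_x=V_{hxh^{-1}}$ and $[G,G]$ acts trivially, $[G,G]$ centralizes every $x\in X$, so $G=[G,G]\langle x\rangle\subseteq C_G(x)$ forces $|X|=1$ and $G$ abelian, contradicting $[G,G]$ simple non-abelian. In short: you need to upgrade ``$V^N\ne\{0\}$'' to ``$N$ acts trivially on $V$'' via the order argument, and then argue on the support rather than on submodules.
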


\begin{proof}
    Assume that $V$ admits a non-trivial decreasing filtration. 
    By Proposition~\ref{pro:nilpotent}, there exists a non-trivial 
    normal subgroup $N$ of $G$ such that $N\subseteq [G,G]$ and 
    \begin{align} \label{eq:g-1F}  
(g-1)\cF^iV\subseteq \cF^{i+1}V
    \end{align}
    for all $g\in N$ and $i\ge 0$. 
    Then $N=[G,G]$ since $[G,G]$ is simple. Since $[G,G]$ is non-abelian, 
    there exists $1\ne g\in [G,G]$
    of order $m$ which is coprime to $\ch(\fie)$. In particular, 
    $\ch(\fie)=p>0$. Then $(g-1)^{p^k}=g^{p^k}-1$ for all $k\ge 1$, and $g^{p^k}=g$ for $k$ the order of $p$ in $U(\ndZ/m\ndZ)$. Thus Equation~\eqref{eq:g-1F} implies that $(g-1)v=0$ for all $v\in V$. The set of all $h\in [G,G]$ acting trivially on $V$ is a normal subgroup of $G$ and has then to coincide with $[G,G]$. Thus the action of $G$ on $V$ factors through the abelian group $G/[G,G]$, and hence $|X|=1$, a contradiction to the assumptions on $G$.
\end{proof}

We conclude the paper with some questions.

Following a question of Andruskiewitsch, we remark that with our techniques
the Gelfand--Kirillov dimension of the Nichols algebras in Theorem~\ref{thm:main}
can not yet be determined.
The main reason for this is that, at the moment, no sufficiently strong results are available on the Gelfand--Kirillov dimension
of Nichols algebras of diagonal type in positive characteristic.

\begin{question}
Is it possible to determine the precise Hilbert series and the Gelfand--Kirillov dimension of the Nichols algebra of a Yetter--Drinfeld module as in Corollary~\ref{cor:Vp}?
\end{question}

A basis for the Nichols algebra of Example~\ref{exa:dim12}
can be obtained by a straightforward calculation using the Diamond
Lemma. However, computer calculations are needed to 
obtain bases for the algebras of the other examples mentioned in the introduction. 
\begin{question}
    Is it possible to construct without computer calculations a basis for each of the Nichols algebras in Examples~\ref{exa:dim1280a}--\ref{exa:dim326592b}?
\end{question}

\subsection*{Acknowledgements}

This work was partially supported by
the project OZR3762 of Vrije Universiteit Brussel.
EM would like to thank Ben Martin for fruitful discussions about geometric invariant theory in positive characteristic. 
\bibliographystyle{abbrv} 
\bibliography{refs.bib}

\end{document}